\newtheorem{theorem}{Theorem}[section]
\newtheorem{definition}{Definition}[section]
\newtheorem{lemma}{Lemma}[section]
\newtheorem{example}{Example}[section]
\newtheorem{remark}{Remark}[section] 
\newtheorem{proposition}{Proposition}[section]
\def\a{\alpha} \def\b{\beta}
\def\dref#1{(\ref{#1})}
\def\sgn{\mbox{sgn}\,}
 \def\dfrac{\displaystyle\frac}
\def\be{\begin{equation}}
\def\bel{\begin{equation}\label}
\def\ee{\end{equation}}
\def\ba{\begin{array}}
\def\ea{\end{array}}
\def\banl{\begin{eqnarray}\label}
\def\ean{\end{eqnarray}}
 \def\bna{\begin{eqnarray}}
\def\ena{\end{eqnarray}} \def\dref#1{(\ref{#1})}
\def\qed{\strut\hfill $\Box$}
\begin{document}

\title{  Inverse Eigenvalue Problem For Mass-Spring-Inerter Systems  \footnotemark[1]
\author{Zhaobo Liu\footnotemark[2]
, \,Qida Xie\footnotemark[2]
, \,Chanying Li \footnotemark[2]
          }
}

\footnotetext[1]{The work was supported by  the National Natural Science Foundation of China under Grants 11925109 and 11688101.  }

\footnotetext[2]{%
Z. B.~Liu,~Q. D. Xie and C.~Li are with the Key Laboratory of Systems and Control, Academy of Mathematics and Systems Science, Chinese Academy of Sciences, Beijing 100190, P.~R.~China.   They are also with the School of Mathematical Sciences, University of Chinese Academy of Sciences, Beijing 100049, P. R. China.  Corresponding author: Chanying~Li (Email: \texttt{cyli@amss.ac.cn}).}

 \maketitle

\begin{abstract}
This paper has solved the inverse eigenvalue problem for ``fixed-free'' mass-chain systems with inerters. It is well known that for a spring-mass system wherein the adjacent masses are linked  through a spring, the  natural frequency assignment can be achieved by choosing appropriate masses and spring stiffnesses if and only if the given  positive eigenvalues   are distinct. However, when we involve inerters, multiple eigenvalues  in the assignment are allowed. In fact, arbitrarily given a set of  positive real numbers,   we derive a necessary and sufficient condition on the multiplicities of these numbers, which are assigned as the natural frequencies of the concerned mass-spring-inerter system.

\end{abstract}

\section{Introduction}

Natural frequency,  an inherent attribute of mechanical vibration systems, has attracted wide attention for its  importance.
 In particular, purposefully  allocating the natural frequencies to some pre-specified  values provides an effective way to  induce or evade  resonance (see \cite{SPB2006}, \cite{YY2001}). This naturally raises the  inverse eigenvalue problem (IEP),
that is, to construct a  vibration system whose natural frequencies,  or mathematically   known as eigenvalues,    are given beforehand.

A well-known result on this problem is due to \cite{GMLG2005} and \cite{PN1997}, which is  addressed for
  mass-spring systems.  Observe that in such a basic system, the adjacent masses are linked merely by a spring. Therefore, the IEP  turns out to be the construction of a Jacobi matrix with its eigenvalues being assigned to a set of specified positive numbers. Borrowing the tools for Jacobi matrices,
   \cite{GMLG2005} and \cite{PN1997} assert that the IEP  is solvable if and only if the given  positive eigenvalues   are distinct. Later on, a various of inverse problems  on  Jacobi matrices and Jacobi operators are investigated as well. But when  dampers are taken into consideration, the matrices associated with masses, spring stiffnesses and damping coefficients  in  mass-spring-damper systems  are no longer  Jacobi matrices and the quadratic inverse eigenvalue problem (QIEP) is  put forth.  Nevertheless, most of the       literatures on   mass-spring-damper systems  focus on distinct eigenvalues assignment \cite{CAI2009,LIN2010,BAI2007,PN1999}.

   Interestingly, the IEP   admits  multiple   eigenvalues,  if we introduce  a mechanical element called the inerter.  This new mechanical device can  simulate masses by changing inertance. It
    was theoretically first studied by \cite{MCS2002}, completing  the   analogy between electrical and mechanical networks (see  Figure \ref{analogy}). Through physical  realization,  inerters have been applied to many engineering fields such as vibration isolators,   landing gears, train suspensions, building vibration control, and so on \cite{XD2015,FCW2010-T,AZM2015,FCW2010-B,IFL2014}. In a mass-spring-inerter system, the neighbouring masses are linked by a parallel combination of a spring and an inerter. As a starting point, we restrict our interest in this paper to ``fixed-free'' systems. The term  ``fixed-free'' means one end of the mass-chain system     is attached to the ground while  the other end is hanging free, as shown in Figure \ref{Fig.1}.

The free vibration equation of such  a  mass-spring-inerter system  is described by
$$\boldsymbol{(M+B) \ddot{x}}+ \boldsymbol{Kx}=0,$$
where $\boldsymbol{x}=(x_1,x_2,\ldots,x_n)^\mathrm{T}\in \mathbb{R}^n$ and
\begin{eqnarray}\label{MM}
\boldsymbol{M} = \mbox{diag}\lbrace m_1,m_2,\ldots,m_n\rbrace,
\end{eqnarray}
\begin{eqnarray}\label{KK}
\boldsymbol{K}&=&
\begin{bmatrix}
k_1+k_2&     -k_2&      &                            \\
 -k_2   &        k_2+k_3&     -k_3&          \\
                    &  \ddots&    \ddots   &   \ddots  &             \\
                    &            &   -k_{n-1}  & k_{n-1}+k_n    &-k_n   \\
                    &          &     &    -k_n &      k_n
\end{bmatrix},
\end{eqnarray}
\begin{eqnarray}\label{BB}
\boldsymbol{B}&=&
\begin{bmatrix}
b_1+b_2&     -b_2&      &                            \\
 -b_2   &        b_2+b_3&     -b_3&          \\
                    & \ddots &    \ddots   &   \ddots  &             \\
                    &            &    -b_{n-1}  &     b_{n-1}+b_n     &-b_n   \\
                    &          &     &    -b_n &      b_n
\end{bmatrix}.
\end{eqnarray}
  Here,  real numbers $m_j> 0$, $k_j> 0$, $b_j\geq 0$ for $j=1,\ldots,n$ stand for the masses, spring stiffnesses and   inertances. Unlike mass-spring systems,  
the well-studied Jacobi matrix theory cannot illuminate the IEP for mass-spring-inerter systems since the inertial matrix in \dref{BB} is a tridiagonal matrix.
Recently, \cite{chen2018} found  that inerters render the multiple eigenvalues possible for a mass-chain system. It showed that  the  multiplicity $t_i$  of  a natural frequency  $\lambda_i$ must fulfill $n\geq 2t_i-1$. Beyond that, little is known for the multiple eigenvalue  case.

The purpose of this paper is to solve the IEP for mass-spring-inerter systems, where the  eigenvalues are  arbitrarily  specified to $n$ positive real numbers.    We deduce a necessary and sufficient condition for this assignment   on the multiplicities of the given numbers. With the proposed critical criterion, the set structure of the given real numbers will be intuitively clear for the natural frequency assignment. 
Our construction further implies that  $m$ masses  of the system can be arbitrarily  fixed  beforehand for the assignment, where $m$ is the amount of the  distinct assigned  eigenvalues. More precisely, our construction is carried out by only adjusting $n-m$ massess, $n$ spring stiffnesses and $n$ inertances.
It  degenerates to
the claim that, if the pre-specified eigenvalues are all distinct ($m=n$), the IEP can be worked out by recovering $\boldsymbol{K}$ and $\boldsymbol{B}$,  whereas $\boldsymbol{M}$ is  fixed arbitrarily. This claim is exactly the main result of \cite{MCS2002}, which demonstrates an advantage of using inerters in the  mass-fixed situation. Unfortunately, not all the  natural frequency assignments are realizable  by  merely adjusting  spring stiffnesses and inertances. An example of five-degree-of-freedom system in this paper shows that there exist some restrictive relationships between masses and  given eigenvalues.



The organization of this paper is as follows. In Section \ref{MR}, we state the main result by deducing a necessary and sufficient condition of the IEP for mass-spring-inerter systems, while the proofs are included in Sections \ref{n1} and \ref{MR1}. Conclusions are drawn in Section \ref{conre}.



\begin{figure}[htbp]

\begin{minipage}[t]{0.5\linewidth}
\centering
\includegraphics[height=3.0cm,width=8.0cm]{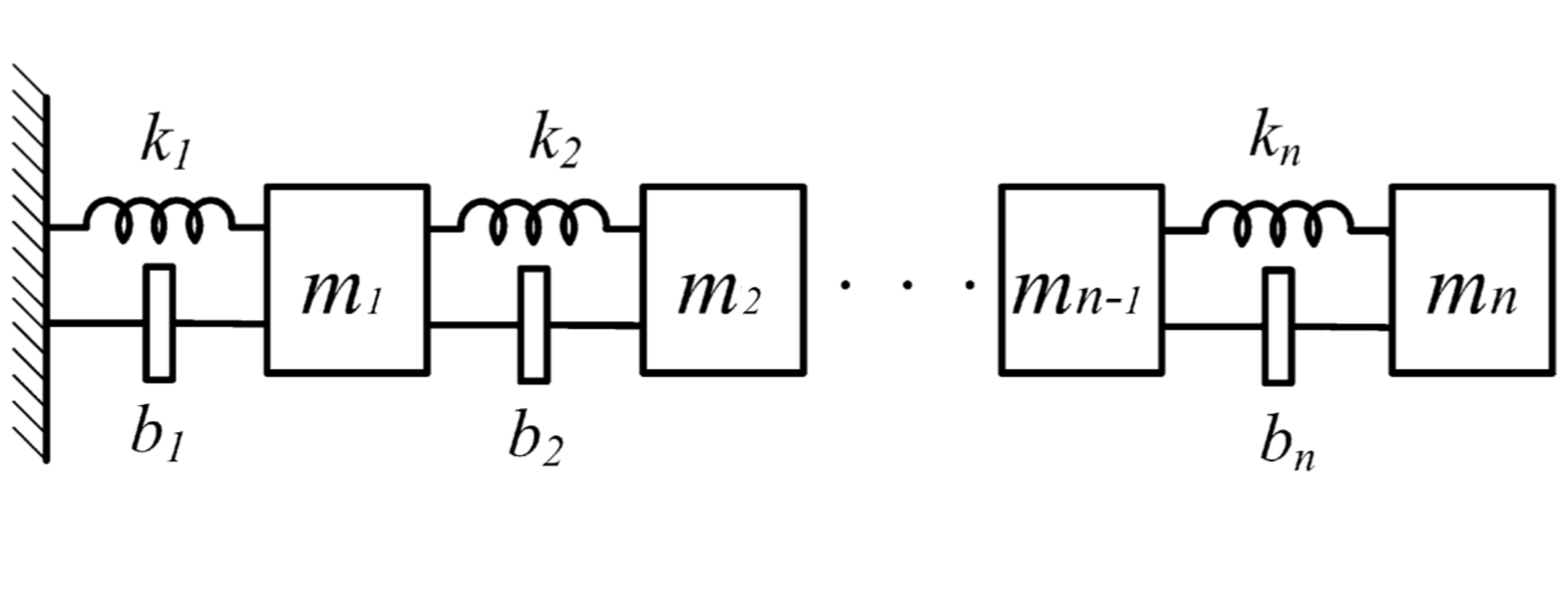}
\caption{Mass-spring-inerter system}
\label{Fig.1}
\end{minipage}
\hfill
\begin{minipage}[t]{0.4\linewidth}
\centering
\includegraphics[height=4.5cm,width=6.3cm]{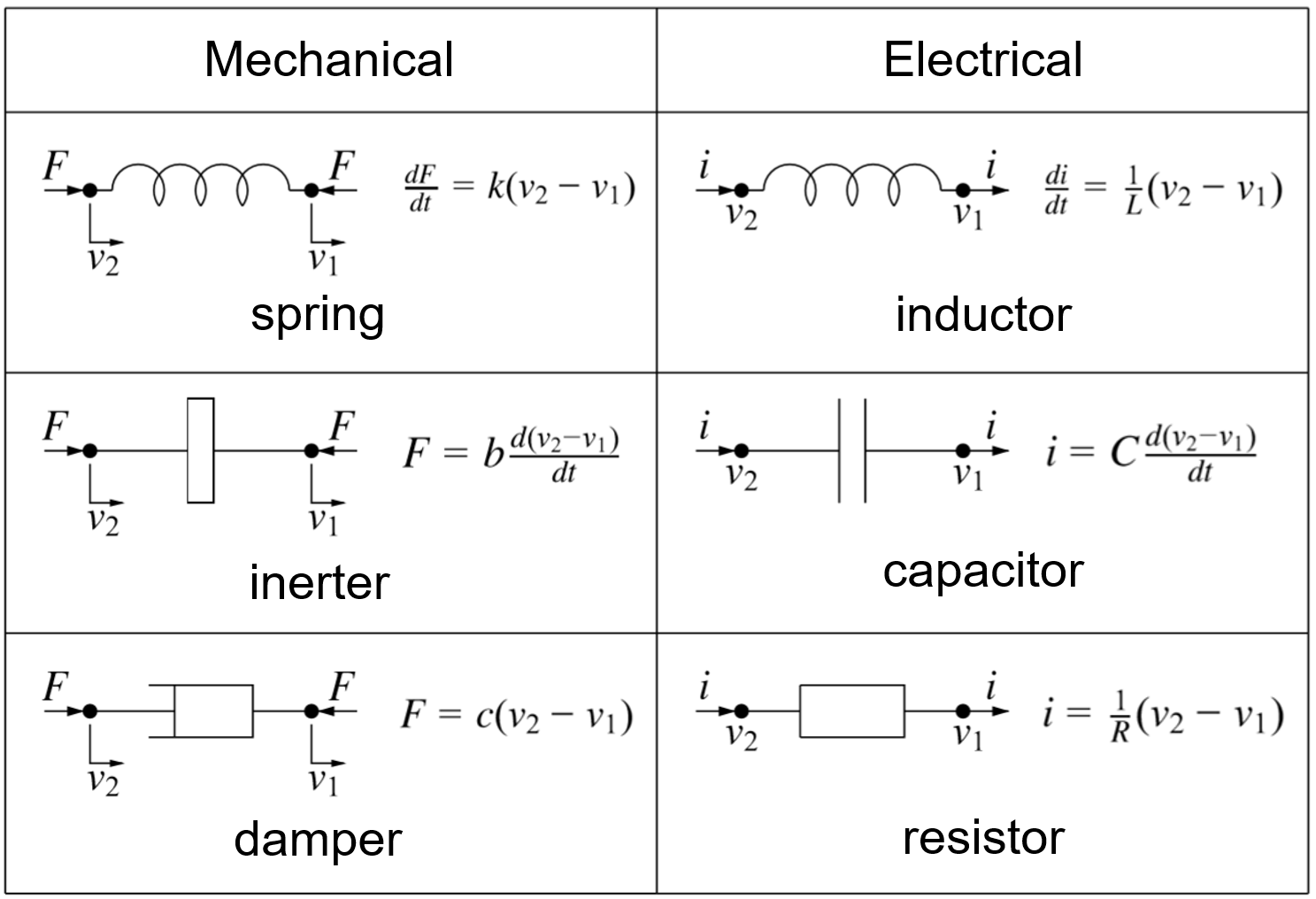}
\caption{The force-current analogy}
\label{analogy}
\end{minipage}

\end{figure}


\section{Main Result}\label{MR}

The natural frequencies of a mass-spring-inerter system are completely determined by
the eigenvalues of  matrix pencil $\boldsymbol{K}-\lambda (\boldsymbol{M}+\boldsymbol{B})$,  where $\boldsymbol{M}, \boldsymbol{K}, \boldsymbol{B}$ are defined by (\ref{MM}), (\ref{KK}) and (\ref{BB}), respectively. So, with a slight abuse of  language,  we  will not distinguish the term ``eigenvalues" from the ``natural frequencies"  in this article.
We now raise our problem.

\textbf{Problem 1.} Arbitrarily given a set of real numbers $0<\lambda_1 \leq \lambda_2 \leq \cdots \leq \lambda_{n}$, is it possible to recover matrices $\boldsymbol{M}, \boldsymbol{K}, \boldsymbol{B}$ in (\ref{MM}), (\ref{KK}) and (\ref{BB}) by
 choosing  $m_j> 0$, $k_j> 0$ and $b_j \geq 0$  for  $j=1,\ldots, n$,  so that the $n$ eigenvalues of  matrix pencil $\boldsymbol{K}-\lambda (\boldsymbol{M}+\boldsymbol{B})$ are exactly $\lambda_i$, $i=1,\ldots, n$?

Both \cite{chen2018} and \cite{PN1997} offered a positive answer to Problem 1 for the special case where the eigenvalues are all  distinct.
But the  general situation should involve multiple eigenvalues, which is covered by the following theorem.




\begin{theorem}\label{sta3}
Let $\prod_{i=1}^{m}(\lambda -\lambda_i)^{t_i}$ be a  polynomial with $0<\lambda_1<\lambda_2<\cdots<\lambda_{m}$ and $\sum_{i=1}^{m}{t_i}=n$. Then,
there exist some matrices $\boldsymbol{K},\boldsymbol{M}, \boldsymbol{B}$ in the forms of \dref{MM}--\dref{BB} such that
\begin{eqnarray}\label{KMBroots}
\prod_{i=1}^{m}(\lambda -\lambda_i)^{t_i} \Big| \det(\boldsymbol{K}-\lambda (\boldsymbol{M}+\boldsymbol{B}))
\end{eqnarray}
\textbf{if and only if}
\begin{eqnarray}\label{jbtj}
t_i\leq i,\quad i=1,\ldots,m.
\end{eqnarray}

\end{theorem}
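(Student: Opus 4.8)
The plan is to turn the divisibility statement \dref{KMBroots} into a rank condition and then to play the tridiagonal structure of $\boldsymbol K$ and $\boldsymbol M+\boldsymbol B$ against their positivity. Since $\boldsymbol K\succ0$ and $\boldsymbol M+\boldsymbol B\succ0$, the generalized eigenvalues of the pencil are the (real, positive) eigenvalues of the symmetric matrix $(\boldsymbol M+\boldsymbol B)^{-1/2}\boldsymbol K(\boldsymbol M+\boldsymbol B)^{-1/2}$, so the multiplicity of $\lambda_i$ in $\det(\boldsymbol K-\lambda(\boldsymbol M+\boldsymbol B))$ equals $\dim\ker(\boldsymbol K-\lambda_i(\boldsymbol M+\boldsymbol B))$. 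Because $\sum_i t_i=n$, condition \dref{KMBroots} is therefore equivalent to $\mathrm{rank}\big(\boldsymbol K-\lambda_i(\boldsymbol M+\boldsymbol B)\big)=n-t_i$ for every $i$. The off-diagonal entries of $\boldsymbol K-\lambda_i(\boldsymbol M+\boldsymbol B)$ are $-(k_j-\lambda_i b_j)$, $j=2,\dots,n$, and a symmetric tridiagonal matrix whose off-diagonal entries are all nonzero has rank at least $n-1$. Hence $t_i\ge2$ forces at least $t_i-1$ of these entries to vanish, i.e.\ $k_j=\lambda_i b_j$, which requires $b_j>0$ and $k_j/b_j=\lambda_i$. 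Calling such a $j$ a \emph{$\lambda_i$-cut}, the cut sets for distinct $\lambda_i$ are automatically disjoint. At $\lambda=\lambda_i$ the matrix splits along its $\lambda_i$-cuts into irreducible tridiagonal diagonal blocks; a short computation, using the cancellation of $k_j$ against $\lambda_i b_j$ on the diagonal, shows that the leftmost block is the $\lambda_i$-evaluation of a shorter fixed--free chain, whereas every block to its right is the $\lambda_i$-evaluation of a free--free chain, and the matrix pencil of a free--free chain has $0$ as its smallest eigenvalue.

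For necessity I would start with $i=1$: $\boldsymbol K-\lambda_1(\boldsymbol M+\boldsymbol B)\succeq0$, so a $\lambda_1$-cut would make one of its diagonal blocks a positive semidefinite free--free pencil evaluated at $\lambda_1>0$, which is impossible since that pencil already has $0$ in its spectrum; hence $t_1=1$. In general $\boldsymbol K-\lambda_i(\boldsymbol M+\boldsymbol B)$ has exactly $t_1+\dots+t_{i-1}$ negative and $t_i$ zero eigenvalues, and summing the inertias of the $r\ge t_i$ diagonal blocks — each singular free--free block carrying at least one negative and one zero eigenvalue — already gives $t_1+\dots+t_i\ge 2t_i-1$, i.e.\ the bound $n\ge 2t_i-1$ of \cite{chen2018}. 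Strengthening this to \dref{jbtj} is where the real work lies: taking a smallest index $i$ violating $t_i\le i$, the inertia count becomes near-extremal, which pins the block sizes and forces the right-hand blocks to be free--free chains whose relevant low eigenvalue equals $\lambda_i$; one must then show that the $t_{i'}-1$ cuts demanded by some $\lambda_{i'}$ with $i'<i$ have nowhere to go, since placing a $\lambda_{i'}$-cut at an interior spring $j$ of such a block requires $k_j/b_j=\lambda_{i'}<\lambda_i$, while that low eigenvalue of the block sits strictly below $k_j/b_j$. Making this squeezing argument precise is, I expect, the main obstacle.

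For sufficiency, assuming $t_i\le i$, I would construct $\boldsymbol K,\boldsymbol M,\boldsymbol B$ recursively in $m$, adjoining $\lambda_1<\cdots<\lambda_m$ one at a time and, for each new $\lambda_i$, installing exactly $t_i-1$ springs and inertances with ratio $k_j/b_j=\lambda_i$ at prescribed positions; the inequality $t_i\le i$ is precisely what makes a globally consistent placement of these cut positions possible, and it lets one mass be prescribed arbitrarily for each distinct eigenvalue, so that $m$ masses stay free and only $n-m$ are adjusted. The remaining $n-m$ masses together with the $n$ stiffnesses and $n$ inertances are then recovered, stage by stage, from the determinant recursion
$$P_\ell=\big(k_\ell+k_{\ell+1}-\lambda(m_\ell+b_\ell+b_{\ell+1})\big)P_{\ell-1}-(k_\ell-\lambda b_\ell)^2P_{\ell-2},$$
and the lengthy part is to certify that every quantity produced is strictly positive (with each $b_j\ge0$). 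In the extreme case $m=n$ this specializes to recovering $\boldsymbol K$ and $\boldsymbol B$ with $\boldsymbol M$ arbitrary, i.e.\ to the construction of \cite{MCS2002}, while \dref{jbtj} exhibits the admissible set structure of the prescribed natural frequencies.
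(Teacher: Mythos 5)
Your proposal is an outline with genuine gaps on both sides of the equivalence, and you partly acknowledge this yourself. On the \textbf{necessity} side, the part you actually carry out is sound: the reduction of \dref{KMBroots} to $\dim\ker(\boldsymbol K-\lambda_i(\boldsymbol M+\boldsymbol B))=t_i$, the observation that an unreduced symmetric tridiagonal matrix has rank $\ge n-1$ so that $t_i-1$ ``cuts'' $k_j=\lambda_i b_j$ are forced, the block splitting into one fixed--free block and free--free blocks, and the inertia count giving $t_1+\cdots+t_{i-1}\ge t_i-1$. But that count only reproduces the bound $n\ge 2t_i-1$ of \cite{chen2018}, which is strictly weaker than \dref{jbtj}: for instance $t_1=1$, $t_2=2$, $t_3=4$ satisfies $t_1+t_2\ge t_3-1$ yet violates $t_3\le 3$. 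The step you defer --- ``the cuts demanded by $\lambda_{i'}$ with $i'<i$ have nowhere to go'' --- is precisely the content of the theorem, and it is not a routine squeeze: one must track, along the chain, how each cut simultaneously raises the multiplicity stored in a common factor and pushes one more root below $\lambda_i$. The paper does exactly this bookkeeping through the recursion of Lemma \ref{ditui}, the interlacing relation $\ll$ of Lemma \ref{two}, and the two counters $\xi$ and $\zeta$, concluding $i-1=\zeta(f_n,\lambda_i)\ge \xi((f_n,g_n),\lambda_i)\ge t_i-1$. Without an argument of comparable precision your necessity proof does not close.

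On the \textbf{sufficiency} side you state a plan (place $t_i-1$ cuts with $k_j/b_j=\lambda_i$, recurse, keep $m$ masses free) that matches the paper's strategy in spirit, but you supply neither the placement rule nor the existence argument. The entire difficulty is in ``certify that every quantity produced is strictly positive'': the paper needs to prescribe the roots of $g_n$ as $\lambda_i+\rho_i$ with hierarchically small perturbations $\rho_i=\varepsilon^{(n+1)^{m-i}}$, run the recursion backwards from $n$ to $1$ (Lemmas \ref{cas1}, \ref{cas2} and \ref{lemma21}), and control the root gaps at every stage via the quantitative perturbation Lemma \ref{lemma1} and the constants $C_1$, $C_2(j)$ of \dref{C1C2}; positivity of $m_j,b_j,k_j$ and the preservation of interlacing are exactly what can fail if the perturbations are not chosen this carefully. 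Moreover, you assert that $t_i\le i$ ``is precisely what makes a globally consistent placement of the cut positions possible'' without exhibiting the placement; the paper's rule \dref{AS} (ordering the sets $S_j$ of \dref{ST-1} and skipping one index per level) is a concrete combinatorial construction whose consistency with the backward recursion must be, and is, verified (Cases 1 and 2 in the proof of Lemma \ref{lemma21}). As written, your proposal identifies the right objects but proves neither implication.
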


\begin{remark}
Theorem \ref{sta3} completely solves Problem 1  by providing the critical criterion \dref{jbtj}. As indicated later (see Proposition \ref{gd} for details), when \dref{jbtj} holds,   the   recover of the relevant matrices  allows a total of $m$ masses being taken arbitrarily, where $m$ is the number of distinct eigenvalues $\lambda_i$ given beforehand. Particularly, for $m=n$, each mass $m_i, 1\leq i\leq n$ can be taken any  fixed quantity in advance, as proved in \cite{chen2018}. However,
when $m<n$,  it is generally impossible to   achieve the natural frequency assignment  with all  the   masses   arbitrarily given. Example \ref{thm3} suggests a restrictive relation between the masses and eigenvalues.


\end{remark}

\begin{example}\label{thm3}
Let $n=5$, $t_1=t_2=1$, $t_3=3$ and  $0<\lambda_1<\lambda_2<\lambda_3$.  If there exist some $m_j>0$, $k_j>0$, $b_j\geq 0$, $j=1, 2,\ldots,5$ such that $\prod_{i=1}^{3}(\lambda -\lambda_i)^{t_i} \Big| \det(\boldsymbol{K}-\lambda (\boldsymbol{M}+\boldsymbol{B}))$, then
\begin{eqnarray}\label{exp1m}
\max_{j\in[2,4]}\frac{m_j}{m_{j+1}}>\frac{\lambda_1}{8\lambda_3}\left(1-\left(\frac{\lambda_2}{\lambda_3}\right)^{\frac{1}{3}}\right).
\end{eqnarray}
Clearly, \dref{jbtj} holds, but the masses cannot be taken arbitrarily.
The proof of \dref{exp1m} is provided in Appendix \ref{appa}.
\end{example}
\section{Proof of the necessity of Theorem \ref{sta3}.}\label{n1}
This section is devoted to proving the necessity of Theorem \ref{sta3}, which  is relatively easier  than the  argument for sufficiency.
We begin by expressing $\det(\boldsymbol{K}-\lambda (\boldsymbol{M}+\boldsymbol{B}))$ in terms of a recursive sequence of  polynomials. First,  write
\begin{eqnarray*}
&&\boldsymbol{K}-\lambda (\boldsymbol{M}+\boldsymbol{B})=\nonumber\\
&&\begin{bmatrix}
 \begin{smallmatrix}
k_1+k_2-\lambda (m_1+b_1+b_2)&     -k_2+\lambda b_2&                                    \\
 -k_2+\lambda b_2   &        k_2+k_3-\lambda (m_2+b_2+b_3)&     -k_3+\lambda b_3          \\
                    & \ddots &    \ddots   &        \ddots            \\
                    &            &   -k_{n-1}+\lambda b_{n-1} &   k_{n-1}+k_n-\lambda(m_{n-1}+b_{n-1}+b_n)   &-k_n+\lambda b_n   \\
                    &          &     &    -k_n+\lambda b_n &      k_n-\lambda (m_n+b_n) \\
  \end{smallmatrix}
\end{bmatrix}.\nonumber
\end{eqnarray*}
For $j=1,\ldots,n$,  let $\boldsymbol{M_j}$, $\boldsymbol{K_j}$ and $\boldsymbol{B_j}$ be some matrices  defined analogously as $\boldsymbol{M},\boldsymbol{K}$ and $\boldsymbol{B}$ in (\ref{MM})--(\ref{BB}), respectively,  but with order $j$ instead of $n$.
Next, denote $f_j(\lambda )$  as the determinant of $\boldsymbol{K_j}-\lambda (\boldsymbol{M_j}+\boldsymbol{B_j})$, $j=1,\ldots,n$. Let $g_1(\lambda)=1$ and
 $g_{j}(\lambda)$  the  leading principal minor of  $\boldsymbol{K_j}-\lambda (\boldsymbol{M_j}+\boldsymbol{B_j})$    of  order $j-1$, where $j=2,\ldots,n$.
 So, to calculate $\det(\boldsymbol{K}-\lambda (\boldsymbol{M}+\boldsymbol{B}))$,
 we only need to treat $f_n(\lambda)$.
\begin{remark}
For each $j=1,\ldots,n$,
since $\det\boldsymbol{K_j}= \prod_{l=1}^{j}k_l> 0$,  the Gershgorin's circle theorem indicates that
 both $\boldsymbol{K_j}$ and $\boldsymbol{M_j}+\boldsymbol{B_j}$ are positive definite matrices and   so do their leading principal submatrices. Then,  it follows that the roots of $f_j(\lambda )$ and $g_j(\lambda )$ are all real and positive (see \cite[Theorem 1.4.3]{GMLG2005} ).
\end{remark}
To facilitate the subsequent analysis, we introduce  the following  definition.
\begin{definition}\label{f<<g}
Let $f(\lambda)$ and $g(\lambda)$ be two polynomials with degree $s$, where $s\in \mathbb{N}^+$. Suppose $f(\lambda)$ and $g(\lambda)$ both have $s$ distinct real roots, which are denoted  by
 $\alpha_1<\cdots<\alpha_s$ and $\beta_1<\cdots<\beta_s$, respectively.  We say $g(\lambda)\ll  f(\lambda)$,  if  their  leading coefficients are of the same sign and
$$\beta_1<\alpha_1<\beta_2<\alpha_2<\cdots<\beta_s<\alpha_s.$$
\end{definition}

The proof depends on a simple observation below.

\begin{lemma}\label{ditui}
The polynomials $\lbrace f_{j}(\lambda)\rbrace_{j=1}^{n}$ and $\lbrace g_{j}(\lambda)\rbrace_{j=1}^{n}$  satisfy
\begin{eqnarray}\label{LS}
\left\{
\begin{aligned}
&f_{j+1}(\lambda )=(-\lambda m_{j+1})g_{j+1}(\lambda)+(k_{j+1}-\lambda b_{j+1})f_{j}(\lambda )\\
&g_{j+1}(\lambda)=f_{j}(\lambda )+(k_{j+1}-\lambda b_{j+1})g_{j}(\lambda)
\end{aligned},\quad j=1,\ldots,n-1
\right.
\end{eqnarray}
with $g_1(\lambda)=1$ and $ f_1(\lambda)=k_1-\lambda (m_1+b_1)$.
\end{lemma}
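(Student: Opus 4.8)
The plan is to prove both identities in \dref{LS} by elementary cofactor expansions of the determinant $f_{j+1}(\lambda)=\det\bigl(\boldsymbol{K_{j+1}}-\lambda(\boldsymbol{M_{j+1}}+\boldsymbol{B_{j+1}})\bigr)$, the whole point being to identify correctly which leading principal submatrix each minor that arises corresponds to. Two observations organize the bookkeeping. First, the leading principal submatrix of $\boldsymbol{K_{j+1}}-\lambda(\boldsymbol{M_{j+1}}+\boldsymbol{B_{j+1}})$ of order $j$ is, by the very definition of $g_{j+1}$, the matrix whose determinant is $g_{j+1}(\lambda)$; and its own leading principal submatrix of order $j-1$ coincides entrywise with that of $\boldsymbol{K_{j}}-\lambda(\boldsymbol{M_{j}}+\boldsymbol{B_{j}})$, hence has determinant $g_{j}(\lambda)$. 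Second, this order-$j$ block, call it $C_j$, is \emph{not} equal to $\boldsymbol{K_{j}}-\lambda(\boldsymbol{M_{j}}+\boldsymbol{B_{j}})$: the two agree except in the $(j,j)$ entry, which is $k_j+k_{j+1}-\lambda(m_j+b_j+b_{j+1})$ in the former and $k_j-\lambda(m_j+b_j)$ in the latter, a discrepancy of exactly $k_{j+1}-\lambda b_{j+1}$.

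From the second observation the $g$-recursion is immediate: expanding $g_{j+1}(\lambda)=\det C_j$ by linearity of the determinant in its last row, the part of the $(j,j)$ entry equal to $k_j-\lambda(m_j+b_j)$ reconstitutes $\det\bigl(\boldsymbol{K_{j}}-\lambda(\boldsymbol{M_{j}}+\boldsymbol{B_{j}})\bigr)=f_j(\lambda)$, while the leftover summand $k_{j+1}-\lambda b_{j+1}$ multiplies the minor obtained by deleting the last row and column, namely $g_j(\lambda)$; this gives $g_{j+1}(\lambda)=f_j(\lambda)+(k_{j+1}-\lambda b_{j+1})g_j(\lambda)$. Next I would expand $f_{j+1}(\lambda)$ along its last row, which by the tridiagonal form has only two nonzero entries: $-k_{j+1}+\lambda b_{j+1}$ in column $j$ and $k_{j+1}-\lambda(m_{j+1}+b_{j+1})$ in column $j+1$. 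The cofactor of the latter is the order-$j$ leading principal minor, i.e.\ $g_{j+1}(\lambda)$; the cofactor of the former, after a second expansion along its (essentially zero) last column, works out to $-(-k_{j+1}+\lambda b_{j+1})g_j(\lambda)$, all signs included. Hence
\[
f_{j+1}(\lambda)=\bigl(k_{j+1}-\lambda(m_{j+1}+b_{j+1})\bigr)g_{j+1}(\lambda)-(k_{j+1}-\lambda b_{j+1})^2 g_j(\lambda).
\]

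To conclude, I would observe that the right-hand side of the last display minus the claimed expression $(-\lambda m_{j+1})g_{j+1}(\lambda)+(k_{j+1}-\lambda b_{j+1})f_j(\lambda)$ equals $(k_{j+1}-\lambda b_{j+1})\bigl[g_{j+1}(\lambda)-f_j(\lambda)-(k_{j+1}-\lambda b_{j+1})g_j(\lambda)\bigr]$, which vanishes by the $g$-recursion just established; this yields the $f$-recursion without any division. The initial conditions are trivial: $g_1(\lambda)=1$ by convention and $f_1(\lambda)=\det[\,k_1-\lambda(m_1+b_1)\,]=k_1-\lambda(m_1+b_1)$. There is no genuine obstacle here — it is pure determinant algebra — and the only point demanding care is exactly the one flagged above, namely not conflating $g_j(\lambda)$ with $f_{j-1}(\lambda)$, since the order-$j$ leading block sitting inside the order-$(j+1)$ matrix carries an extra $k_{j+1}-\lambda b_{j+1}$ in its bottom-right corner.
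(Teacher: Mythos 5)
Your proposal is correct and follows essentially the same route as the paper: establish the $g$-recursion by splitting the $(j,j)$ entry of the order-$j$ leading principal block (the paper does this via cofactor expansion along the $j$th row, you via row-linearity, which is the same computation), then expand $f_{j+1}$ along the last row to get $f_{j+1}=(k_{j+1}-\lambda(m_{j+1}+b_{j+1}))g_{j+1}-(k_{j+1}-\lambda b_{j+1})^2g_j$ and substitute the $g$-recursion. The point you flag --- that the order-$j$ leading block of the order-$(j+1)$ matrix carries an extra $k_{j+1}-\lambda b_{j+1}$ in its corner and must not be confused with $\boldsymbol{K_j}-\lambda(\boldsymbol{M_j}+\boldsymbol{B_j})$ --- is exactly the crux of the paper's argument as well.
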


\begin{proof}
By the definition of $\boldsymbol{K_{1}}-\lambda (\boldsymbol{M_{1}}+\boldsymbol{B_{1}})$, it is trivial that
$ f_1(\lambda)=k_1-\lambda (m_1+b_1)$. For $j=1,\ldots, n-1$, expanding the leading principal minor of  $\boldsymbol{K_{j+1}}-\lambda (\boldsymbol{M_{j+1}}+\boldsymbol{B_{j+1}})$ of order $j$ by cofactors of the $j$th row shows  $g_{j+1}(\lambda)=f_{j}(\lambda )+(k_{j+1}-\lambda b_{j+1})g_{j}(\lambda)$. Furthermore, the expansion of $\det (\boldsymbol{K_{j+1}}-\lambda (\boldsymbol{M_{j+1}}+\boldsymbol{B_{j+1}}))$ by cofactors of the $(j+1)$th row yields
\begin{eqnarray*}
f_{j+1}(\lambda )
&=&(k_{j+1}-\lambda (m_{j+1}+b_{j+1}))g_{j+1}(\lambda)-(k_{j+1}-\lambda b_{j+1})^2g_{j}(\lambda )\\
&=&(-\lambda m_{j+1})g_{j+1}(\lambda)+(k_{j+1}-\lambda b_{j+1})f_{j}(\lambda )\\
&&+(k_{j+1}-\lambda b_{j+1})^2g_{j}(\lambda )-(k_{j+1}-\lambda b_{j+1})^2g_{j}(\lambda )\\
&=&(-\lambda m_{j+1})g_{j+1}(\lambda)+(k_{j+1}-\lambda b_{j+1})f_{j}(\lambda ),
\end{eqnarray*}
as desired.
\end{proof}

\begin{lemma}\label{one}
Let $f(\lambda)$ and $g(\lambda)$ be two polynomials 
that $g(\lambda)\ll f(\lambda)$, then for any  $a,b>0$, $g(\lambda)\ll ag(\lambda)+bf(\lambda)\ll f(\lambda)$.
\end{lemma}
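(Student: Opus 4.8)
The plan is a standard sign-change (interlacing) argument applied to $h(\lambda):=ag(\lambda)+bf(\lambda)$. Since $f$ and $g$ both have degree $s$ with leading coefficients of the same sign and $a,b>0$, the polynomial $h$ has degree $s$ and leading coefficient of that same sign; hence it is enough to exhibit $s$ distinct real roots of $h$ and to check that they interlace the $\alpha_i$ and the $\beta_i$ exactly as required by Definition~\ref{f<<g}. Without loss of generality I would assume the common sign of the leading coefficients is positive, since replacing $(f,g)$ by $(-f,-g)$ affects neither the relation $\ll$ nor the conclusion.

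First I would record the signs of $f$ and $g$ between consecutive roots: writing $\alpha_0=\beta_0=-\infty$ and $\alpha_{s+1}=\beta_{s+1}=+\infty$, one has $\sgn f(\lambda)=(-1)^{s-k}$ for $\lambda\in(\alpha_k,\alpha_{k+1})$ and $\sgn g(\lambda)=(-1)^{s-k}$ for $\lambda\in(\beta_k,\beta_{k+1})$. Then I would evaluate $h$ at the roots. Since $g(\beta_i)=0$ we get $h(\beta_i)=b\,f(\beta_i)$, and the interlacing hypothesis places $\beta_i\in(\alpha_{i-1},\alpha_i)$, so $\sgn h(\beta_i)=(-1)^{s-i+1}$. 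Since $f(\alpha_i)=0$ we get $h(\alpha_i)=a\,g(\alpha_i)$, and the hypothesis places $\alpha_i\in(\beta_i,\beta_{i+1})$, so $\sgn h(\alpha_i)=(-1)^{s-i}$. Thus $h(\beta_i)$ and $h(\alpha_i)$ have opposite signs, and by the intermediate value theorem $h$ has a root $\gamma_i\in(\beta_i,\alpha_i)$ for each $i=1,\dots,s$. Because $\alpha_i<\beta_{i+1}$, these open intervals are pairwise disjoint, so $\gamma_1<\cdots<\gamma_s$ are $s$ distinct real roots of $h$, hence all of them.

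It then remains only to read off the two interlacing chains. From $\beta_i<\gamma_i<\alpha_i<\beta_{i+1}$ we get $\beta_1<\gamma_1<\beta_2<\gamma_2<\cdots<\beta_s<\gamma_s$, i.e.\ $g\ll h$; and from $\gamma_i<\alpha_i<\gamma_{i+1}$ (using $\gamma_{i+1}>\beta_{i+1}>\alpha_i$) we get $\gamma_1<\alpha_1<\gamma_2<\alpha_2<\cdots<\gamma_s<\alpha_s$, i.e.\ $h\ll f$, which finishes the proof. I do not anticipate any real obstacle here; the only care needed is the bookkeeping of signs and the boundary cases $i=1$ and $i=s$, where the relevant intervals are unbounded, together with the immediate observation that $h$ does not drop degree.
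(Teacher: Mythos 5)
Your proposal is correct and follows essentially the same sign-change argument as the paper: evaluate $ag+bf$ at the $\alpha_i$ and $\beta_i$, observe the alternating opposite signs, locate one root in each interval $(\beta_i,\alpha_i)$, and conclude by counting degrees. The only difference is that you spell out the sign bookkeeping that the paper states in a single line, so no further comment is needed.
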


\begin{proof}
Since $g(\lambda)\ll f(\lambda)$, considering Definition \ref{f<<g}, we let $\deg(f(\lambda))=\deg (g(\lambda))=s$ for some $s\in \mathbb{N}^+$ and let $\{\a_i\}_{i=1}^s$ and $\{\b_i\}_{i=1}^s$ be the roots of $f(\lambda)$ and $g(\lambda)$, respectively. Clearly,
$$\beta_1<\alpha_1<\beta_2<\alpha_2<\cdots<\beta_s<\alpha_s.$$
 Without loss of generality, assume  the leading coefficients of $f(\lambda)$ and $g(\lambda)$ are both positive. Then,
  $$\sgn((ag(\lambda)+bf(\lambda))(\beta_i))=(-1)^{s+1-i}=-(-1)^{s-i}=-\sgn((ag(\lambda)+bf(\lambda))(\alpha_i)).$$
  This implies that for each $1\leq i\leq s$, there is a root of $ag(\lambda)+bf(\lambda)$ falling in interval $(\beta_i,\alpha_i)$. Observing that the degree of $ag(\lambda)+bf(\lambda)$ is $s$, the result follows immediately.
\end{proof}

We present  an important property enjoyed by sequence $\lbrace f_{j}(\lambda), g_{j}(\lambda)\rbrace_{j=1}^{n}$.

\begin{lemma}\label{two}
 Suppose for some $j\in [1,n-1]$,
\begin{eqnarray}\label{tj}
\frac{(-\lambda) g_j(\lambda)}{( f_j(\lambda), g_j(\lambda))}\ll \frac{ f_j(\lambda)}{( f_j(\lambda), g_j(\lambda))},
\end{eqnarray}
then
$\frac{(-\lambda)g_{j+1}(\lambda)}{(f_{j+1}(\lambda),g_{j+1}(\lambda))}\ll \frac{f_{j+1}(\lambda)}{(f_{j+1}(\lambda),g_{j+1}(\lambda))}.$
Moreover, \\
(i) if $b_{j+1}\neq 0$ and $k_{j+1}-\lambda b_{j+1} | \frac{ f_j(\lambda)}{( f_j(\lambda), g_j(\lambda))}$, then $(f_{j+1}(\lambda),g_{j+1}(\lambda))=(f_{j}(\lambda),g_{j}(\lambda))(\lambda-\frac{k_{j+1}}{b_{j+1}})$
and $\frac{f_{j+1}(\lambda)}{(f_{j+1}(\lambda),g_{j+1}(\lambda))}\ll \frac{- f_j(\lambda)}{( f_j(\lambda), g_j(\lambda))}$;\\
(ii) if $b_{j+1}\neq 0$ and $k_{j+1}-\lambda b_{j+1}\nmid \frac{ f_j(\lambda)}{( f_j(\lambda), g_j(\lambda))}$, then
$(f_{j+1}(\lambda),g_{j+1}(\lambda))=(f_{j}(\lambda),g_{j}(\lambda))$ and $\frac{f_{j+1}(\lambda)}{(f_{j+1}(\lambda),g_{j+1}(\lambda))}\ll \frac{ f_j(\lambda)(k_{j+1}-\lambda b_{j+1})}{( f_j(\lambda), g_j(\lambda))}$;\\
(iii) if $b_{j+1}=0$, then $(f_{j+1}(\lambda),g_{j+1}(\lambda))=(f_{j}(\lambda),g_{j}(\lambda))$ and $\frac{(-\lambda)f_{j}(\lambda)}{(f_{j}(\lambda),g_{j}(\lambda))}\ll \frac{(-\lambda)g_{j+1}(\lambda)}{(f_{j+1}(\lambda),g_{j+1}(\lambda))}\ll \frac{f_{j+1}(\lambda)}{(f_{j+1}(\lambda),g_{j+1}(\lambda))}$.
\end{lemma}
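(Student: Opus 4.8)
This is a one-step propagation statement (the inductive step of a later induction on $j$), so the plan is to fix $j\in[1,n-1]$, assume \dref{tj}, and push everything through the greatest-common-divisor. Put $d_j=(f_j,g_j)$, $\hat f_j=f_j/d_j$, $\hat g_j=g_j/d_j$ (so $(\hat f_j,\hat g_j)=1$) and $h=h(\lambda):=k_{j+1}-\lambda b_{j+1}$. Since $f_j(0)=\det\boldsymbol{K_j}=\prod_{l\le j}k_l>0$ and $g_j(0)$ is a leading principal minor of the positive definite matrix $\boldsymbol{K_j}$, hence positive, one gets $\lambda\nmid d_j$, the signs of $\hat f_j(0)$ and $\hat g_j(0)$ agree, $\deg\hat f_j=:s$ and $\deg\hat g_j=s-1$. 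Substituting $f_j=d_j\hat f_j$, $g_j=d_j\hat g_j$ into the recursion of Lemma \ref{ditui} gives $g_{j+1}=d_j\,G$ and $f_{j+1}=d_j\,F$, where $G:=\hat f_j+h\hat g_j$ and $F:=(-\lambda m_{j+1})G+h\hat f_j$; hence $(f_{j+1},g_{j+1})=d_j\,(F,G)$. It then remains to (a) compute $(F,G)$ and (b) verify the interlacing claims for $F/(F,G)$ and $G/(F,G)$.

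For (a) I would run one Euclidean step: $(F,G)=(h\hat f_j,\,G)=(h\hat f_j,\,\hat f_j+h\hat g_j)$, and since $(\hat f_j,\hat g_j)=1$ every irreducible common divisor must divide $h$. If $b_{j+1}=0$ then $h\equiv k_{j+1}$ is a nonzero constant, so $(F,G)$ is constant and $d_{j+1}=d_j$ — this is case (iii). If $b_{j+1}\ne0$ then $h$ is linear with the single positive root $k_{j+1}/b_{j+1}$, and it divides $\hat f_j$ precisely in case (i); there $h\mid G$ and one gets $d_{j+1}=d_j\big(\lambda-\tfrac{k_{j+1}}{b_{j+1}}\big)$ \emph{provided} $h^2\nmid G$, whereas in case (ii) there is no common divisor and $d_{j+1}=d_j$.

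For (b), the hypothesis \dref{tj} together with the remark on positivity of the roots of $f_j,g_j$ says precisely that $\hat f_j$ has $s$ distinct positive roots $\alpha_1<\cdots<\alpha_s$ while $\hat g_j$ has $s-1$ distinct positive roots interlacing them, $0<\alpha_1<\gamma_1<\alpha_2<\cdots<\gamma_{s-1}<\alpha_s$, with compatible leading signs $\sgn(\mathrm{lc}\,\hat g_j)=-\sgn(\mathrm{lc}\,\hat f_j)$. I would first settle the case $b_{j+1}=0$ directly by a root-location sign count: $G(\alpha_i)=k_{j+1}\hat g_j(\alpha_i)$ alternates in sign, $G(0)=\hat f_j(0)+k_{j+1}\hat g_j(0)\neq 0$ has a determined sign, and $\deg G$, $\mathrm{lc}\,G$ are read off with no cancellation, which pins down all roots of $G=\hat g_{j+1}$; evaluating $F=(-\lambda m_{j+1})G+k_{j+1}\hat f_j$ at the roots of $G$, at $\lambda=0$ and at $\pm\infty$ does the same for $F=\hat f_{j+1}$, yielding the chain $(-\lambda)\hat f_j\ll(-\lambda)\hat g_{j+1}\ll\hat f_{j+1}$ of case (iii). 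For $b_{j+1}\neq0$ I would write $G$ and $F$ as positive linear combinations built from their $b_{j+1}=0$ analogues and $(-\lambda)\hat g_j$ (e.g. $G=(\hat f_j+k_{j+1}\hat g_j)+b_{j+1}\big((-\lambda)\hat g_j\big)$) and invoke Lemma \ref{one}, which transfers an already-established $\ll$ relation across any positive combination, and then cancel the factor of $d_{j+1}$ found in (a); in case (i) one must additionally note that dividing by $\lambda-k_{j+1}/b_{j+1}$ flips the sign of the leading coefficient, which is why the extra relation there reads $\ll -\hat f_j$ and not $\ll \hat f_j$.

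The main obstacle will be the bookkeeping in (b): certifying that no spurious real root appears, that each located root lands in the prescribed open interval, and that the signs of the leading coefficients stay consistent through the two substitutions, the combinations handled by Lemma \ref{one}, and the gcd cancellation. The single most delicate point is the multiplicity-one claim $h^2\nmid G$ in case (i); this is exactly where the \emph{distinctness} encoded in \dref{tj} — the root $k_{j+1}/b_{j+1}$ of $h$ being a simple root of $\hat f_j$ — is indispensable.
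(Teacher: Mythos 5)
Your overall strategy is the paper's: reduce everything modulo $d_j=(f_j,g_j)$, push the recursion of Lemma \ref{ditui} through, compute the new gcd by a Euclidean step, and certify the interlacings by sign evaluations at the known roots combined with Lemma \ref{one}. The gcd bookkeeping in your part (a) is correct, and your treatment of case (iii) is exactly the paper's. The one place where the plan under-delivers is the root localization in cases (i)--(ii). Writing $G=\hat f_j+h\hat g_j$ as $G_0+b_{j+1}\bigl((-\lambda)\hat g_j\bigr)$ with $G_0=\hat f_j+k_{j+1}\hat g_j$ and invoking Lemma \ref{one} does yield $(-\lambda)\hat g_j\ll G\ll G_0$, hence $s_j$ distinct roots of $G$ --- which, incidentally, settles your ``delicate point'' $h^2\nmid G$ more robustly than simplicity of the root of $\hat f_j$ alone would (simplicity does not by itself exclude $(\hat f_j/h)(\lambda_0)=-\hat g_j(\lambda_0)$; it is the interlacing that forces these two values to have the same sign). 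But Lemma \ref{one} only places the roots of $G$ in intervals cut out by the roots of $G_0$ and of $(-\lambda)\hat g_j$, whereas every downstream claim --- the main interlacing obtained from the $F$-recursion, the relation $\frac{f_{j+1}(\lambda)}{(f_{j+1}(\lambda),g_{j+1}(\lambda))}\ll \frac{- f_j(\lambda)}{( f_j(\lambda), g_j(\lambda))}$ in (i), and $\frac{f_{j+1}(\lambda)}{(f_{j+1}(\lambda),g_{j+1}(\lambda))}\ll \frac{ f_j(\lambda)(k_{j+1}-\lambda b_{j+1})}{( f_j(\lambda), g_j(\lambda))}$ in (ii) --- requires the roots of $G$ to be located relative to $\{\alpha_{j,i}\}\cup\{k_{j+1}/b_{j+1}\}$, i.e., the intermediate interlacing of $(-\lambda)G$ against $h\hat f_j$. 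That finer localization is not a consequence of your decomposition; it comes from the one-line evaluation $G(\alpha_{j,i})=h(\alpha_{j,i})\,\hat g_j(\alpha_{j,i})$ together with the sign of $G$ at $k_{j+1}/b_{j+1}$, which is precisely the computation the paper carries out and which you commit to only for $b_{j+1}=0$. Extend that sign count to the $b_{j+1}\neq 0$ cases and the argument closes.
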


\begin{proof}
First, according to the definitions of  $\lbrace f_{j}(\lambda), g_{j}(\lambda)\rbrace_{j=1}^{n}$, it is apparent that
$\deg(f_j(\lambda))=j$  and $\deg(g_j(\lambda))=j-1$  for each $j\in [1,n]$. Let $\deg((f_j(\lambda),g_j(\lambda)))=j-s_j$ for some integer  $ s_j\in [1,j]$. Recalling \dref{tj},  denote the roots of $\frac{f_j(\lambda)}{(f_j(\lambda),g_j(\lambda))}$ and $\frac{(-\lambda)g_j(\lambda)}{(f_j(\lambda),g_j(\lambda))}$ by $\alpha_{j,1}<\cdots<\alpha_{j,s_j}$ and $0<\beta_{j,1}<\cdots<\beta_{j,s_{j}-1}$, respectively. These roots fulfill
\begin{eqnarray}\label{ic}
0<\alpha_{j,1}<\beta_{j,1}<\cdots<\alpha_{j,s_j-1}<\beta_{j,s_{j}-1}<\alpha_{j,s_j}.
\end{eqnarray}
In addition, the second equation of \dref{LS} implies
 $(f_j(\lambda),g_j(\lambda)) | (f_j(\lambda),g_{j+1}(\lambda))$. Now, we
 prove this lemma by discussing three cases.

(i)  $b_{j+1}\neq 0$ and $k_{j+1}-\lambda b_{j+1} | \frac{f_j(\lambda)}{(f_j(\lambda),g_j(\lambda))}$. For this case,
there exists some $1\leq t_j\leq s_j$ such that $\alpha_{j,t_j}=\frac{k_{j+1}}{b_{j+1}}$. We shall evaluate the sign of $\frac{g_{j+1}(\lambda)}{(f_{j}(\lambda),g_{j}(\lambda))(k_{j+1}-\lambda b_{j+1})}$ at $\a_{j,i}, 1\leq i\leq s_j$. In fact, according to the second equation of \dref{LS},
\begin{eqnarray*}
&&\frac{g_{j+1}(\lambda)}{(f_{j}(\lambda),g_{j}(\lambda))(k_{j+1}-\lambda b_{j+1})}(\alpha_{j,i})\nonumber\\
&=&\frac{f_{j}(\lambda)}{(f_{j}(\lambda),g_{j}(\lambda))(k_{j+1}-\lambda b_{j+1})}(\alpha_{j,i})+\frac{g_{j}(\lambda)}{(f_{j}(\lambda),g_{j}(\lambda))}(\alpha_{j,i}),\qquad 1\leq i\leq s_j.
\end{eqnarray*}
Note that the leading coefficient of $(f_j(\lambda),g_j(\lambda))$ is positive, the definitions of $f_j(\lambda)$ and $g_j(\lambda)$ read
\begin{eqnarray}\label{fjgj=}
\left\{
\begin{aligned}
&f_j(\lambda) = (-1)^{j-s_j}(f_j(\lambda),g_j(\lambda))\prod_{l=1}^{s_j}(\alpha_{j,l}-\lambda)\\
&g_j(\lambda) = (-1)^{j-s_j}(f_j(\lambda),g_j(\lambda))\prod_{l=1}^{s_j-1}(\beta_{j,l}-\lambda)
\end{aligned},
\right.
\end{eqnarray}
which, together with
\dref{ic}, yields that for $1\leq i\leq s_j$,
\begin{eqnarray*}
\sgn\left(\frac{f_{j}(\lambda)}{(f_{j}(\lambda),g_{j}(\lambda))(k_{j+1}-\lambda b_{j+1})}(\alpha_{j,i})\right)& = &(-1)^{j-s_{j}}\sgn\left(\prod_{l=1,l\neq t_j}^{s_j}(\alpha_{j,l}-\alpha_{j,i})\right)\\
&=& \left\{
\begin{array}{lcl}
0,& i\neq t_j\\
(-1)^{j-s_{j}+t_j-1},& i = t_j
\end{array}
\right.
\end{eqnarray*}
and
\begin{eqnarray*}
\sgn\left(\frac{g_{j}(\lambda)}{(f_{j}(\lambda),g_{j}(\lambda))}(\alpha_{j,i})\right) = (-1)^{j-s_{j}}\sgn\left(\prod_{l=1}^{s_{j-1}}(\beta_{j,l}-\alpha_{j,i})\right) = (-1)^{j-s_{j}+i-1}.
\end{eqnarray*}
Therefore,
$$\sgn\left(\frac{g_{j+1}(\lambda)}{(f_{j}(\lambda),g_{j}(\lambda))(k_{j+1}-\lambda b_{j+1})}(\alpha_{j,i})\right)= (-1)^{j-s_{j}+i-1} \quad \mbox{for}\quad 1\leq i\leq s_j.$$
This means that for each $i\in [1,s_j-1]$,  there exists exactly one root of $\frac{g_{j+1}(\lambda)}{(f_{j}(\lambda),g_{j}(\lambda))(k_{j+1}-\lambda b_{j+1})}$ between $\a_{j,i}$ and $\a_{j,i+1}$,
 and hence $ \frac{(-\lambda) g_{j+1}(\lambda)}{(f_{j}(\lambda),g_{j}(\lambda))(k_{j+1} -\lambda b_{j+1})}\ll\frac{f_{j}(\lambda)}{(f_{j}(\lambda),g_{j}(\lambda))}$.

 Moreover,  $(f_j(\lambda),g_j(\lambda)) | (f_j(\lambda),g_{j+1}(\lambda))$,  so $$(f_{j}(\lambda),g_{j+1}(\lambda))=(f_{j}(\lambda),g_{j}(\lambda))\left(\lambda-\frac{k_{j+1}}{b_{j+1}}\right)$$
  and
\begin{eqnarray*}
\frac{(-\lambda)g_{j+1}(\lambda)}{(f_{j}(\lambda),g_{j+1}(\lambda))}=\frac{(-\lambda)g_{j+1}(\lambda)}{(f_{j}(\lambda),g_{j}(\lambda))\left(\lambda -\frac{k_{j+1}}{b_{j+1}}\right)}\ll\frac{-f_{j}(\lambda)}{(f_{j}(\lambda),g_{j}(\lambda))}=\frac{\left(\frac{k_{j+1}}{b_{j+1}}-\lambda\right)f_{j}(\lambda)}{(f_{j}(\lambda),g_{j+1}(\lambda))}.
\end{eqnarray*}
Applying Lemma \ref{one} to the first equation of \dref{LS}, we thus deduce
\begin{eqnarray}\label{gj+1fj}
\frac{(-\lambda)g_{j+1}(\lambda)}{(f_{j}(\lambda),g_{j+1}(\lambda))}\ll \frac{f_{j+1}(\lambda)}{(f_{j}(\lambda),g_{j+1}(\lambda))}\ll \frac{\left(\frac{k_{j+1}}{b_{j+1}}-\lambda\right)f_{j}(\lambda)}{(f_{j}(\lambda),g_{j+1}(\lambda))} = \frac{-f_j(\lambda)}{(f_j(\lambda),g_j(\lambda))}.
\end{eqnarray}
Now,  $(f_{j+1}(\lambda),g_{j+1}(\lambda))=(f_{j}(\lambda),g_{j+1}(\lambda))$ 
 becasue of  $\frac{(-\lambda)g_{j+1}(\lambda)}{(f_{j}(\lambda),g_{j+1}(\lambda))}\ll \frac{f_{j+1}(\lambda)}{(f_{j}(\lambda),g_{j+1}(\lambda))}$,  it follows from \dref{gj+1fj} that
$$\frac{(-\lambda)g_{j+1}(\lambda)}{(f_{j+1}(\lambda),g_{j+1}(\lambda))}\ll \frac{f_{j+1}(\lambda)}{(f_{j+1}(\lambda),g_{j+1}(\lambda))}\ll \frac{-f_j(\lambda)}{(f_j(\lambda),g_j(\lambda))}.$$

(ii) $b_{j+1}\neq 0$ and $(k_{j+1}-\lambda b_{j+1})\nmid \frac{f_j(\lambda)}{(f_j(\lambda),g_j(\lambda))}$.  We first assume $\frac{k_{j+1}}{b_{j+1}}\in (\alpha_{j,t_j},\beta_{j,t_j})$ for some $1\leq t_j\leq s_j-1$ and evaluate the sign of $\frac{g_{j+1}(\lambda)}{(f_{j}(\lambda),g_{j}(\lambda))}$ at points  $\frac{k_{j+1}}{b_{j+1}}$ and  $\a_{j,i}, 1\leq i\leq s_j$. Since Lemma \ref{ditui} shows
$$\frac{g_{j+1}(\lambda)}{(f_{j}(\lambda),g_{j}(\lambda))}(\alpha_{j,i})=(k_{j+1}-\alpha_{j,i}b_{j+1})\frac{g_{j}(\lambda)}{(f_{j}(\lambda),g_{j}(\lambda))}(\alpha_{j,i}),\quad 1\leq i\leq s_j,$$
by \dref{ic} and \dref{fjgj=},
\begin{eqnarray*}
\sgn\left(\frac{g_{j+1}(\lambda)}{(f_{j}(\lambda),g_{j}(\lambda))}(\alpha_{j,i})\right)&=& (-1)^{j-s_{j}}\sgn\left((k_{j+1}-\alpha_{j,i}b_{j+1})\prod_{l=1}^{s_{j-1}}(\beta_{j,l}-\alpha_{j,i})\right)\\&=&
\left\{
\begin{array}{lcl}
(-1)^{j-s_{j}+i-1},& 1\leq i\leq t_j \\
(-1)^{j-s_{j}+i},&  t_j+1\leq i\leq s_j
\end{array}.
\right.
\end{eqnarray*}
Similarly, by Lemma \ref{ditui}, \dref{ic} and \dref{fjgj=},
\begin{eqnarray*}
&&\sgn\left(\frac{g_{j+1}(\lambda)}{(f_{j}(\lambda),g_{j}(\lambda))}\left(\frac{k_{j+1}}{b_{j+1}}\right)\right)\\
&=& \sgn\left(\frac{f_{j}(\lambda)}{(f_{j}(\lambda),g_{j}(\lambda))}\left(\frac{k_{j+1}}{b_{j+1}}\right)+\frac{(k_{j+1}-\lambda b_{j+1})g_{j}(\lambda)}{(f_{j}(\lambda),g_{j}(\lambda))}\left(\frac{k_{j+1}}{b_{j+1}}\right)\right)= (-1)^{j-s_{j}+t_j}.
\end{eqnarray*}
So, for each $i\in [1,s_j-1]$ with $i\neq t_j$,  there is a root of $\frac{g_{j+1}(\lambda)}{(f_{j}(\lambda),g_{j}(\lambda))}$ falling in $(\alpha_{j,i},\alpha_{j,i+1})$, and the rest two roots of $\frac{g_{j+1}(\lambda)}{(f_{j}(\lambda),g_{j}(\lambda))}$ lie in $(\alpha_{j,t_j},\frac{k_{j+1}}{b_{j+1}})$ and $(\frac{k_{j+1}}{b_{j+1}},\alpha_{j,t_j+1})$, respectively. This infers $(f_{j}(\lambda),g_{j+1}(\lambda))=(f_{j}(\lambda),g_{j}(\lambda))$ by noting that
 $(f_j(\lambda),g_j(\lambda)) | (f_j(\lambda),g_{j+1}(\lambda))$. Hence,
$\frac{(-\lambda)g_{j+1}(\lambda)}{(f_{j}(\lambda),g_{j+1}(\lambda))}\ll \frac{f_j(\lambda)(k_{j+1}-\lambda b_{j+1})}{(f_j(\lambda),g_{j+1}(\lambda))}$ and
applying Lemma \ref{one} to the first equation of \dref{LS}, one has
$$\frac{(-\lambda)g_{j+1}(\lambda)}{(f_{j}(\lambda),g_{j+1}(\lambda))}\ll \frac{f_{j+1}(\lambda)}{(f_{j}(\lambda),g_{j+1}(\lambda))}\ll \frac{f_j(\lambda)(k_{j+1}-\lambda b_{j+1})}{(f_j(\lambda),g_{j+1}(\lambda))}.$$
Then, $(f_{j+1}(\lambda),g_{j+1}(\lambda))=(f_{j}(\lambda),g_{j+1}(\lambda))=(f_{j}(\lambda),g_{j}(\lambda))$ and consequently
\begin{eqnarray*}
\frac{(-\lambda)g_{j+1}(\lambda)}{(f_{j+1}(\lambda),g_{j+1}(\lambda))}&=&\frac{(-\lambda)g_{j+1}(\lambda)}{(f_{j}(\lambda),g_{j+1}(\lambda))}\\
&\ll &\frac{f_{j+1}(\lambda)}{(f_{j+1}(\lambda),g_{j+1}(\lambda))}\ll \frac{f_j(\lambda)(k_{j+1}-\lambda b_{j+1})}{(f_j(\lambda),g_{j+1}(\lambda))}= \frac{f_j(\lambda)(k_{j+1}-\lambda b_{j+1})}{(f_j(\lambda),g_j(\lambda))}.
\end{eqnarray*}

As for the situations where  $\frac{k_{j+1}}{b_{j+1}}\in(0,\alpha_{j,1})$, $\frac{k_{j+1}}{b_{j+1}}\in\bigcup_{l=1}^{s_j-1}[\beta_{j,l},\alpha_{j,l+1})$ and $\frac{k_{j+1}}{b_{j+1}}\in(\alpha_{j,s_j},+ \infty)$, an analogous  treatment can be employed.

(iii)  $b_{j+1}=0$.  We also first  calculate the sign of $\frac{g_{j+1}(\lambda)}{(f_{j}(\lambda),g_{j}(\lambda))}$ at the roots of $\frac{f_j(\lambda)}{(f_j(\lambda),g_j(\lambda)) }$ and $\frac{g_j(\lambda)}{(f_j(\lambda),g_j(\lambda)) }$. As before,
\begin{eqnarray}\label{gj+1/fg}
\left\{
\begin{array}{ll}
\sgn\left(\frac{g_{j+1}(\lambda)}{(f_{j}(\lambda),g_{j}(\lambda))}(\alpha_{j,i})\right)=(-1)^{j-s_{j}+i-1}, &1\leq i\leq s_{j}\\
\sgn\left(\frac{g_{j+1}(\lambda)}{(f_{j}(\lambda),g_{j}(\lambda))}(\beta_{j,i})\right)=(-1)^{j-s_{j}+i}, & 1\leq i\leq s_{j}-1
\end{array}.
\right.
\end{eqnarray}
Now,  $\deg(g_{j+1}(\lambda))=j$ and the leading coefficient of $(f_j(\lambda),g_j(\lambda))$ is positive,  if number  $\theta_{j,1}>\alpha_{j,s_j}$ is sufficiently large, it is evident that  $(f_{j}(\theta_{j,1}),g_{j+1}(\theta_{j,1})) > 0$ and
\begin{eqnarray*}
\sgn\left(\frac{g_{j+1}(\lambda)}{(f_{j}(\lambda),g_{j}(\lambda))}(\theta_{j,1})\right)=(-1)^{\deg(g_{j+1}(\lambda))}=(-1)^{j}.
\end{eqnarray*}
So, each interval in $\{(\alpha_{j,s_j},+\infty), (\alpha_{j,i},\beta_{j,i}),i=1,\cdots,s_j-1\}$ contains  exactly one root of
 $\frac{g_{j+1}(\lambda)}{(f_{j}(\lambda),g_{j}(\lambda))}$, which concludes
 $$
 (f_{j}(\lambda),g_{j+1}(\lambda))=(f_{j}(\lambda),g_{j}(\lambda))\quad \mbox{and}\quad \frac{f_{j}(\lambda)}{(f_{j}(\lambda),g_{j+1}(\lambda))}\ll \frac{g_{j+1}(\lambda)}{(f_{j}(\lambda),g_{j+1}(\lambda))}.$$

 Let $\gamma_{j,1}<\cdots<\gamma_{j,s_j}$ be the roots of $\frac{g_{j+1}(\lambda)}{(f_j(\lambda),g_{j+1}(\lambda))}$.
From the first equation of \dref{LS} and \dref{gj+1/fg}, it follows that
\begin{eqnarray}
\left\{
\begin{array}{ll}
\sgn\left(\frac{f_{j+1}(\lambda)}{(f_{j}(\lambda),g_{j+1}(\lambda))}(\alpha_{j,i})\right)= \sgn\left(\frac{(-\lambda)g_{j+1}(\lambda)}{(f_{j}(\lambda),g_{j+1}(\lambda))}(\alpha_{j,i})\right)= (-1)^{j-s_{j}+i}, &  1\leq i\leq s_{j}\\
\sgn\left(\frac{f_{j+1}(\lambda)}{(f_{j}(\lambda),g_{j+1}(\lambda))}(\gamma_{j,i})\right)= \sgn\left(\frac{f_{j}(\lambda)}{(f_{j}(\lambda),g_{j+1}(\lambda))}(\gamma_{j,i})\right)= (-1)^{j-s_{j}+i},& 1\leq i\leq s_{j}\\
\sgn\left(\frac{f_{j+1}(\lambda)}{(f_{j}(\lambda),g_{j+1}(\lambda))}(0)\right)=\sgn\left(\frac{f_{j}(\lambda)}{(f_{j}(\lambda),g_{j+1}(\lambda))}(0)\right)= (-1)^{j-s_{j}}
\end{array}.
\right.
\end{eqnarray}
Because $\deg(f_{j+1}(\lambda))=j+1$ and the leading coefficient of $(f_j(\lambda),g_j(\lambda))$ is positive, we can choose a sufficiently large $\theta_{j,2}>\gamma_{s_j}>\alpha_{s_j}$  such that $(f_{j}(\theta_{j,2}),g_{j+1}(\theta_{j,2})) > 0$ and
\begin{eqnarray*}
\sgn\left(\frac{f_{j+1}(\lambda)}{(f_{j}(\lambda),g_{j+1}(\lambda))}(\theta_{j,2})\right)=(-1)^{j+1}.
\end{eqnarray*}
Recall that $\frac{f_{j}(\lambda)}{(f_{j}(\lambda),g_{j+1}(\lambda))}\ll \frac{g_{j+1}(\lambda)}{(f_{j}(\lambda),g_{j+1}(\lambda))}$, so each interval in $$\{(0,\alpha_{j,1}),(\gamma_{j,s_j},+\infty), (\gamma_{j,i},\alpha_{j,i+1}), i=1,\ldots,s_j-1\}$$ contains  exactly one root of $\frac{f_{j+1}(\lambda)}{(f_{j}(\lambda),g_{j+1}(\lambda))}$.
Because of this property,  $$(f_{j+1}(\lambda),g_{j+1}(\lambda))=(f_{j}(\lambda),g_{j+1}(\lambda))=(f_{j}(\lambda),g_{j}(\lambda)),$$
 and hence  $\frac{(-\lambda)f_{j}(\lambda)}{(f_{j}(\lambda),g_{j}(\lambda))}\ll \frac{(-\lambda)g_{j+1}(\lambda)}{(f_{j+1}(\lambda),g_{j+1}(\lambda))}\ll \frac{f_{j+1}(\lambda)}{(f_{j+1}(\lambda),g_{j+1}(\lambda))}$.
\end{proof}
It is ready to prove the necessity of Theorem \ref{sta3}. To this end, we introduce some notations. Let $f(\lambda)$ be a
 polynomial whose  roots $\{z_i\}_{i=1}^{p}$ are all real and $z_i< z_j$ for every $i< j$. Denote $\xi(f(\lambda),z_i)$ as  the multiplicity of root $z_i$ and for a real number $\alpha$,  define
 $ \zeta(f(\lambda),\alpha)\triangleq\max\{i\in [1,p]: z_i<\a\}$.

\emph{The proof of the necessity of Theorem \ref{sta3}}: First, in view of \dref{KMBroots}, we know that $\lambda_i, 1\leq i\leq m$ are the $m$  distinct roots of $f_n(\lambda)$ with  multiplicities $t_i$. To proceed the argument,
note that Lemma \ref{ditui} gives $\frac{(-\lambda)g_1(\lambda)}{(f_1(\lambda),g_1(\lambda))}\ll \frac{f_1(\lambda)}{(f_1(\lambda),g_1(\lambda))}$, then by using Lemma \ref{two},
\begin{eqnarray}\label{hs}
\frac{(-\lambda)g_j(\lambda)}{(f_j(\lambda),g_j(\lambda))}\ll \frac{f_j(\lambda)}{(f_j(\lambda),g_j(\lambda))},\quad j=1,\ldots,n.
\end{eqnarray}
Particularly, it turns out that all  the roots of $\frac{f_n(\lambda)}{(f_n(\lambda),g_n(\lambda))}$ are distinct. So $\xi\left(\frac{f_n(\lambda)}{(f_n(\lambda),g_n(\lambda))}, \lambda_i\right)\leq 1$ for all $1\leq i\leq m$ and then
\begin{eqnarray}\label{dym}
\xi\left((f_n(\lambda),g_n(\lambda)), \lambda_i\right)=\xi\left(f_n(\lambda), \lambda_i\right)-\xi\left(\frac{f_n(\lambda)}{(f_n(\lambda),g_n(\lambda))}, \lambda_i\right)\geq t_i-1.
\end{eqnarray}

Now, by \dref{hs} and Lemma \ref{two}, for each $j=1,\ldots,n-1$,
\begin{eqnarray*}
(f_{j+1}(\lambda),g_{j+1}(\lambda)) = \left\{
\begin{array}{ll}
(f_{j}(\lambda),g_{j}(\lambda))(\lambda-\frac{k_{j+1}}{b_{j+1}}), &\mbox{if}~ b_{j+1}\neq 0, (\lambda-\frac{k_{j+1}}{b_{j+1}})|\frac{f_j(\lambda)}{(f_j(\lambda),g_j(\lambda))}\\
(f_{j}(\lambda),g_{j}(\lambda)), &\mbox{otherwise}
\end{array},
\right.
\end{eqnarray*}
which yields that for all  $1\leq i\leq m$,
\begin{eqnarray}\label{dd1}
&&\xi\left((f_{j+1}(\lambda),g_{j+1}(\lambda)), \lambda_i\right)\nonumber\\&=& \left\{
\begin{array}{ll}
\xi\left((f_{j}(\lambda),g_{j}(\lambda)), \lambda_i\right)+1, & \mbox{if}~ b_{j+1}\neq 0, k_{j+1}=\lambda_i b_{j+1},(\lambda-\lambda_i)|\frac{f_j(\lambda)}{(f_j(\lambda),g_j(\lambda))}\\
\xi\left((f_{j}(\lambda),g_{j}(\lambda)), \lambda_i\right), & \mbox{otherwise}
\end{array}.
\right.
\end{eqnarray}
On the other hand, given $j\in [1,n-1]$ and $i\in [1,m]$, if $b_{j+1}\neq 0, k_{j+1}=\lambda_i b_{j+1}$ and $(\lambda-\lambda_i)|\frac{f_j(\lambda)}{(f_j(\lambda),g_j(\lambda))}$, Lemma \ref{two} (i) indicates  $\frac{f_{j+1}(\lambda)}{(f_{j+1}(\lambda),g_{j+1}(\lambda))}\ll \frac{-f_j(\lambda)}{(f_j(\lambda),g_j(\lambda))}$. Moreover,  $\lambda_i$ is a root of  $\frac{f_j(\lambda)}{(f_j(\lambda),g_j(\lambda))}$, it follows that $\zeta\left(\frac{f_{j+1}(\lambda)}{(f_{j+1}(\lambda),g_{j+1}(\lambda))},\lambda_i\right)=\zeta\left(\frac{f_j(\lambda)}{(f_{j}(\lambda),g_{j}(\lambda))}, \lambda_i\right)+1$. Otherwise, by virtue of Lemma \ref{two}, at least one of the following cases will happen:\\
(i) if $b_{j+1}\neq 0, k_{j+1}=\lambda_i b_{j+1}$ and $(k_{j+1}-\lambda b_{j+1})|\frac{f_j(\lambda)}{(f_j(\lambda),g_j(\lambda))}$, then $\frac{f_{j+1}(\lambda)}{(f_{j+1}(\lambda),g_{j+1}(\lambda))}\ll \frac{-f_j(\lambda)}{(f_j(\lambda),g_j(\lambda))};$\\
(ii)  if $b_{j+1}\neq 0$ and $k_{j+1}-\lambda b_{j+1}\nmid \frac{f_j(\lambda)}{(f_j(\lambda),g_j(\lambda))}$, then
 $\frac{f_{j+1}(\lambda)}{(f_{j+1}(\lambda),g_{j+1}(\lambda))}\ll \frac{f_j(\lambda)(k_{j+1}-\lambda b_{j+1})}{(f_j(\lambda),g_j(\lambda))}$;\\
(iii) if $b_{j+1}=0$, then $\frac{(-\lambda)f_{j}(\lambda)}{(f_{j}(\lambda),g_{j}(\lambda))}\ll \frac{f_{j+1}(\lambda)}{(f_{j+1}(\lambda),g_{j+1}(\lambda))}$.\\
All the above three cases lead to $\zeta\left(\frac{f_{j+1}(\lambda)}{(f_{j+1}(\lambda),g_{j+1}(\lambda))},\lambda_i\right)\geq \zeta\left(\frac{f_j(\lambda)}{(f_{j}(\lambda),g_{j}(\lambda))},\lambda_i\right)$. So,
\begin{eqnarray}\label{dd2}
\left\{
\begin{array}{ll}
\zeta\left(\frac{f_{j+1}(\lambda)}{(f_{j+1}(\lambda),g_{j+1}(\lambda))},\lambda_i\right)=\zeta\left(\frac{f_j(\lambda)}{(f_{j}(\lambda),g_{j}(\lambda))}, \lambda_i\right)+1, & \mbox{if case (i) occurs}\\
\zeta\left(\frac{f_{j+1}(\lambda)}{(f_{j+1}(\lambda),g_{j+1}(\lambda))},\lambda_i\right)\geq \zeta\left(\frac{f_j(\lambda)}{(f_{j}(\lambda),g_{j}(\lambda))}, \lambda_i\right),& \mbox{otherwise}
\end{array}.
\right.
\end{eqnarray}
Clearly,  $\zeta\left(\frac{f_1(\lambda)}{(f_{1}(\lambda),g_{1}(\lambda))}, \lambda_i\right)\geq \xi\left((f_{1}(\lambda),g_{1}(\lambda)), \lambda_i\right)=0$. By (\ref{dd1}) and (\ref{dd2}),  it can be derived inductively that
$$
\zeta\left(\frac{f_{n}(\lambda)}{(f_{n}(\lambda),g_{n}(\lambda))}, \lambda_i\right)\geq \xi\left((f_{n}(\lambda),g_{n}(\lambda)), \lambda_i\right).$$
 Together with (\ref{dym}), the above inequality   shows that for each $1\leq i\leq m$,
\begin{eqnarray*}
i-1=\zeta\left(f_{n}(\lambda), \lambda_i\right)\geq\zeta\left(\frac{f_{n}(\lambda)}{(f_{n}(\lambda),g_{n}(\lambda))}, \lambda_i\right)\geq \xi\left((f_{n}(\lambda),g_{n}(\lambda)), \lambda_i\right)\geq t_i-1,
\end{eqnarray*}
which completes the proof.

\section{Proof of sufficiency of Theorem \ref{sta3}.}\label{MR1}
The sufficiency of Theorem \ref{sta3} is  a direct consequence of the following proposition.
\begin{proposition}\label{gd}
Given $m$ real numbers $M_1,\ldots,M_m>0$ and a polynomial $\prod_{i=1}^{m}(\lambda -\lambda_i)^{t_i}$ with $0<\lambda_1<\lambda_2<\cdots<\lambda_{m}$ and $\sum_{i=1}^{m}{t_i}=n$, if
 $t_i\leq i$ for each $i=1,\ldots,m$,  then there exist some $m_j>0$, $k_j>0$, $b_j\geq 0$, $j=1,\ldots,n$ and $m$ distinct indices $i_l, l=1,\ldots,m$ 
  such that $m_{i_l}=M_l$ and
\begin{eqnarray*}
\prod_{i=1}^{m}(\lambda -\lambda_i)^{t_i} \Big| \det(\boldsymbol{K}-\lambda (\boldsymbol{M}+\boldsymbol{B})).
\end{eqnarray*}
\end{proposition}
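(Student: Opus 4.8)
\emph{Proof plan.} Sufficiency of Theorem \ref{sta3} is exactly Proposition \ref{gd}, and I would prove it by explicitly building the pair sequence $\{(f_j(\lambda),g_j(\lambda))\}_{j=1}^{n}$ through the recursion of Lemma \ref{ditui}, choosing the parameters $(m_{j+1},k_{j+1},b_{j+1})$ one step at a time. All the bookkeeping is carried by two polynomials per step: the reduced polynomial $f_j/(f_j,g_j)$, whose roots are simple (this is \dref{hs}, valid for \emph{any} admissible parameters) and which records the eigenvalue positions still at our disposal; and the gcd $(f_j,g_j)$, whose roots record the multiplicities already ``locked in''. Lemma \ref{two} provides exactly two kinds of elementary moves at step $j+1$. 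A \emph{locking} move --- an inerter with $k_{j+1}/b_{j+1}$ equal to a current simple root $\alpha$ of $f_j/(f_j,g_j)$ (case (i)) --- absorbs $\alpha$ into the gcd, raising its locked multiplicity by one, and by Lemma \ref{two}(i) it shifts all the remaining simple roots strictly downward, where, using the remaining free parameter(s) together with Lemma \ref{one}, they can be steered onto prescribed lower eigenvalues. An \emph{injection} move (cases (ii), (iii)) keeps the gcd fixed but raises $\deg\big(f_j/(f_j,g_j)\big)$ by one, refreshing the whole simple-root pattern one level up; one then steers the new largest simple root onto the next, higher eigenvalue. In either move one spare parameter can be set to a prescribed mass value, which is how the $m$ masses $M_1,\dots,M_m$ get assigned.

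I would run this as an induction on $m$, peeling off the largest eigenvalue. The base case $m=1$ forces $n=1$ and is immediate ($b_1=0$, $m_1=M_1$, $k_1=\lambda_1 M_1$). For the inductive step, feed the sub-data $\lambda_1<\cdots<\lambda_{m-1}$, multiplicities $t_1,\dots,t_{m-1}$ (still admissible since $t_i\le i$), and masses $M_1,\dots,M_{m-1}$ to the inductive hypothesis, obtaining an $(n-t_m)$-dimensional realization; I would strengthen the hypothesis so that it also delivers $(f_{n-t_m},g_{n-t_m})$ in a fixed ``standard position'' --- recording which eigenvalue is the active simple root of the reduced polynomial and how the auxiliary roots of $g$ interlace everything in the pattern of Definition \ref{f<<g}. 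Then append a block of $t_m$ steps that installs $(\lambda-\lambda_m)^{t_m}$: one injection move to bring $\lambda_m$ in as the top simple root, followed by locking moves on $\lambda_m$ interleaved with re-injections to raise its locked multiplicity to $t_m-1$, arranged so that at every moment at most one eigenvalue is active while all the others are frozen inside the gcd at their final orders (hence untouched by every later move), and so that standard position is re-established at the end. Throughout, the inequalities $m_{j+1}>0$, $k_{j+1}>0$, $b_{j+1}\ge 0$ must be checked at each move.

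The point where $t_i\le i$ becomes essential --- and the main obstacle --- is the scheduling of the locking moves. A locking move only shifts simple roots downward, so each time the locked multiplicity at $\lambda_i$ is increased, the move that does it must lock an eigenvalue strictly above $\lambda_i$ and drop the freed simple root back onto $\lambda_i$; building the locked order up to $t_i-1$ thus requires $t_i-1$ eigenvalues lying above $\lambda_i$ to serve as ``donors'', and there are exactly $i-1$ of them. This is the constructive counterpart of the chain $t_i-1\le\xi\big((f_n,g_n),\lambda_i\big)\le\zeta\big(f_n/(f_n,g_n),\lambda_i\big)\le\zeta(f_n,\lambda_i)=i-1$ used in the necessity proof, and the substance of the sufficiency argument is that when $t_i\le i$ for every $i$ this schedule can in fact be executed with every parameter of the correct sign. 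Carrying the schedule out --- showing that at each of the $n$ moves the two or three parameters can be chosen to place the required simple root exactly on the intended eigenvalue, keep the remaining ``junk'' roots away from all the prescribed eigenvalues, respect every sign constraint, and (at $m$ designated moves) hit the prescribed mass --- is the computational heart of the proof; I expect it to reduce, move by move, to solving a small system of polynomial equations and inequalities in $(m_{j+1},k_{j+1},b_{j+1})$ and exhibiting a positive solution, with the interlacing already in place guaranteeing that one exists.
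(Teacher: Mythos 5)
Your high-level picture of the mechanism is right --- the gcd $(f_j,g_j)$ records locked multiplicities, Lemma \ref{two}(i) is the locking move, cases (ii)--(iii) inject a new simple root, and the count of available ``donors'' above $\lambda_i$ is the constructive face of $t_i\le i$; this matches the paper's schedule \dref{AS}. But the proposal has a genuine gap, and it sits exactly where you locate the ``computational heart.'' Your plan rests on the claim that once an eigenvalue has been steered onto its target it is \emph{frozen} and ``untouched by every later move.'' That is true only for roots absorbed into the gcd. In all three cases of Lemma \ref{two} the reduced polynomial $f_{j+1}/(f_{j+1},g_{j+1})$ \emph{strictly} interlaces with (a multiple of) $f_j/(f_j,g_j)$, so every simple root drifts at every subsequent step. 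Since $f_n/(f_n,g_n)$ has simple roots, the gcd can hold $\lambda_i$ with multiplicity at most $t_i-1$, and the final reduced polynomial must be exactly $\prod_{i=1}^m(\lambda-\lambda_i)$: all $m$ eigenvalues must be \emph{simple} roots at step $n$, hence none of them can be parked early and forgotten. A forward construction therefore has to aim each simple root not at $\lambda_i$ but at a pre-computed position that will drift onto $\lambda_i$ after all remaining steps, and the last step alone (three parameters) cannot correct $m$ root positions. Your induction on $m$ with a terminal block for $\lambda_m$ does not resolve this; it just postpones the same simultaneous-placement problem to the final step.

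The paper avoids the drift problem by running the recursion of Lemma \ref{ditui} \emph{backward}: it fixes $F_n(\lambda)=\prod_{i=1}^m(\lambda-\lambda_i)$ and a deliberately perturbed $G_n(\lambda)=\prod_{i=1}^{m-1}(\lambda-\lambda_i-\rho_i)$, with $\rho_i$ chosen through the explicit constants in \dref{vDelta}--\dref{MC}, and then solves each step of \dref{LS} for the \emph{previous} pair $(F_j,G_j)$ together with $(\lambda_{j+1}^*,b_{j+1},m_{j+1})$ (Lemmas \ref{cas1}--\ref{cas2}, driven by the quantitative root-perturbation bounds of Lemma \ref{lemma1} and the interval control \dref{zcs}, \dref{sxb}). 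Positivity of the parameters and the insertion of the prescribed masses $M_l$ then come out of the tracked leading-coefficient ratios $-\mu_j/\nu_j$. The existence of a valid choice at each step --- which you defer to an unexamined ``small system of polynomial equations and inequalities'' --- is precisely what these estimates establish, and they are the bulk of the proof. To repair your argument you would either have to reverse the direction of the induction as the paper does, or supply a genuinely new mechanism (e.g.\ a continuity/degree argument on the parameter-to-roots map) for hitting all $m$ targets simultaneously at the final step.
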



We now begin the construction of   the required mass-chain system for Proposition \ref{gd}. That is,  to find a sequence of $\lbrace (k_i,b_i,m_i)\rbrace_{i=1}^{n}$  and $m$ indices $i_l$ such that  $\prod_{i=1}^{m}(\lambda-\lambda_{i})^{t_i}|f_n(\lambda)$ and $m_{i_l}=M_l$, $l=1,\ldots,m$, where $f_{n}(\lambda)=\det(\boldsymbol{K}-\lambda (\boldsymbol{M}+\boldsymbol{B}))$. 
Taking account to  \cite[Theorem 4]{chen2018}, we
take  $T=\max_{1\leqslant i\leqslant m}t_i>1$.
Moreover, denote $q_j\triangleq |S_j|$, where
\begin{eqnarray}\label{ST-1}
S_j\triangleq\lbrace \lambda_i: t_i\geq j+1\rbrace,\quad j=1,\ldots, T-1.
\end{eqnarray}
Evidently,  $\sum_{j=1}^{T-1}q_j=n-m$.
We reorder the elements of $S_j$ by $s_{j}(1)<\cdots<s_{j}(q_j)$.

An important observation of Section \ref{n1} is that
(\ref{dym}) implies
 every element in $\bigcup_{j=1}^{T-1}S_j$ is equal to some $k_i/b_i, i\in [2,n]$. This  could help us to design a rule to determine which $k_i/b_i\in \bigcup_{j=1}^{T-1}S_j$. It is the key idea of our proof, so  we offer an example to elaborate it.

 \begin{example}\label{exrule}
Take   $n=15, m=8, t_1=t_5=t_7=t_8=1, t_2=t_4=2, t_3=3, t_6=4$ and $0<\lambda_1<\lambda_2<\cdots<\lambda_8$  in Proposition \ref{gd}. Then, we get sets $S_{j},j=1,2,3$ as shown in  Figure \ref{Fig.3}.  Note that by \dref{KK}--\dref{BB}, for all $i\in[2,15]$,  $-k_i$ and $-b_i$ are located in the secondary diagonals of matrices $\textbf{K}$ and $\textbf{B}$, respectively. We now introduce a $15\times 15$ matrix $\textbf{A}=(a_{ij})$ and assign the elements $a_{i,i+1}$  in the secondary diagonal  the values taken  from  sets $S_{j},j=1,2,3$ (see Figure \ref{Fig.2}). Specifically, for $j=1$, we treat the first $1+q_1=5$ elements of  $a_{i,i+1}$ by
skipping $a_{5,6}$ and letting $a_{i,i+1}=s_1(4-i+1)$ for $i=1,2,3,4$. So, $a_{1,2}=\lambda_6, a_{2,3}=\lambda_4, a_{3,4}=\lambda_3, a_{4,5}=\lambda_2$,  as illustrated in  Fig. 3. Repeat this procedure for elements $a_{i,i+1}, i=\sum_{l=1}^{j-1}q_l+2,\ldots, \sum_{l=1}^{j}q_l+2$ with $j=2, 3$.  For each $i$ with $a_{i,i+1}\in \bigcup_{j=1}^{3}S_j$, we assign $k_{i+1}/b_{i+1}$     a value equivalent to $a_{i,i+1}$.
As a result, $k_2/b_2=k_7/b_7=k_{10}/b_{10}=\lambda_6$, $k_3/b_3=\lambda_4$,  $k_4/b_4=k_{8}/b_{8}=\lambda_3$, $k_5/b_5=\lambda_2$.

\begin{figure}[htbp]
\begin{minipage}[t]{0.4\linewidth}
\centering
\includegraphics[height=4cm,width=6cm]{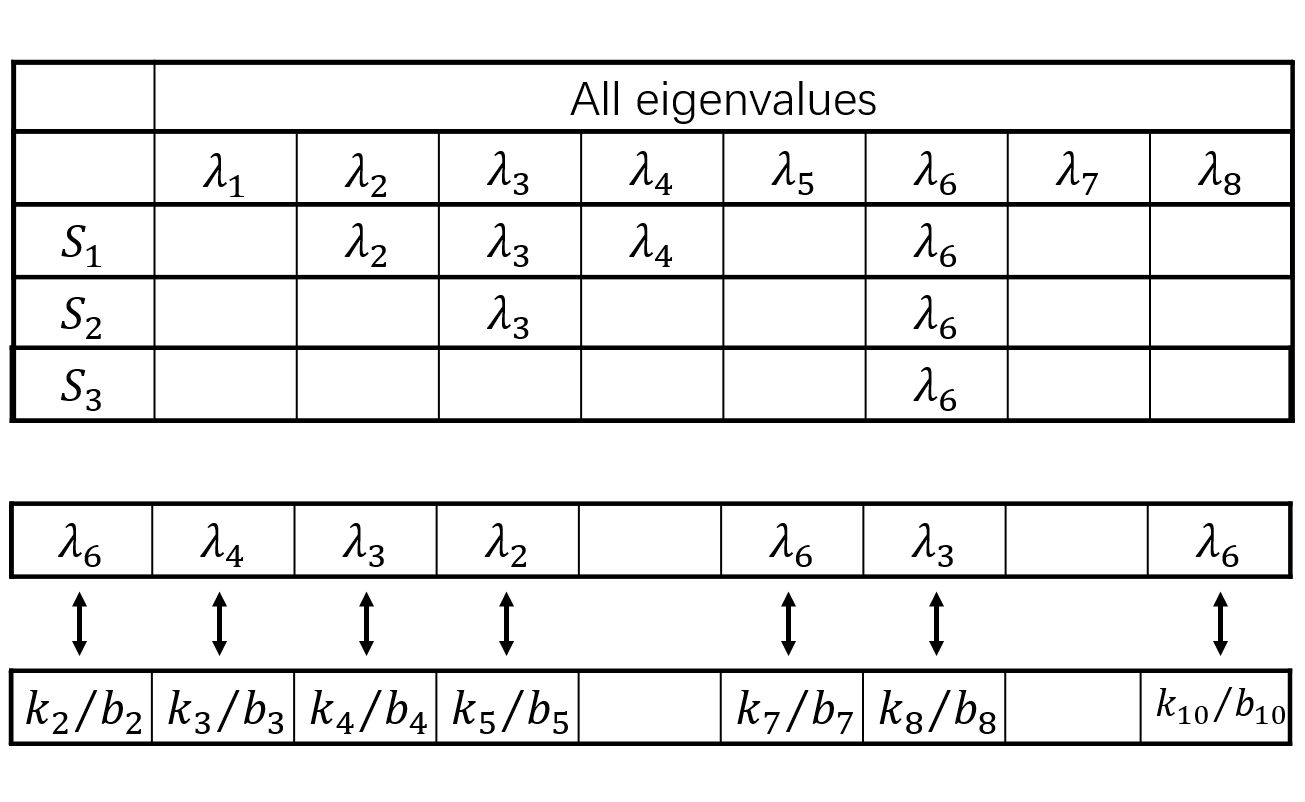}
\caption{Association between $k_i/b_i$ and $\lambda_j$}
\label{Fig.3}
\end{minipage}
\hfill
\begin{minipage}[t]{0.5\linewidth}
\centering
\includegraphics[height=7cm,width=8cm]{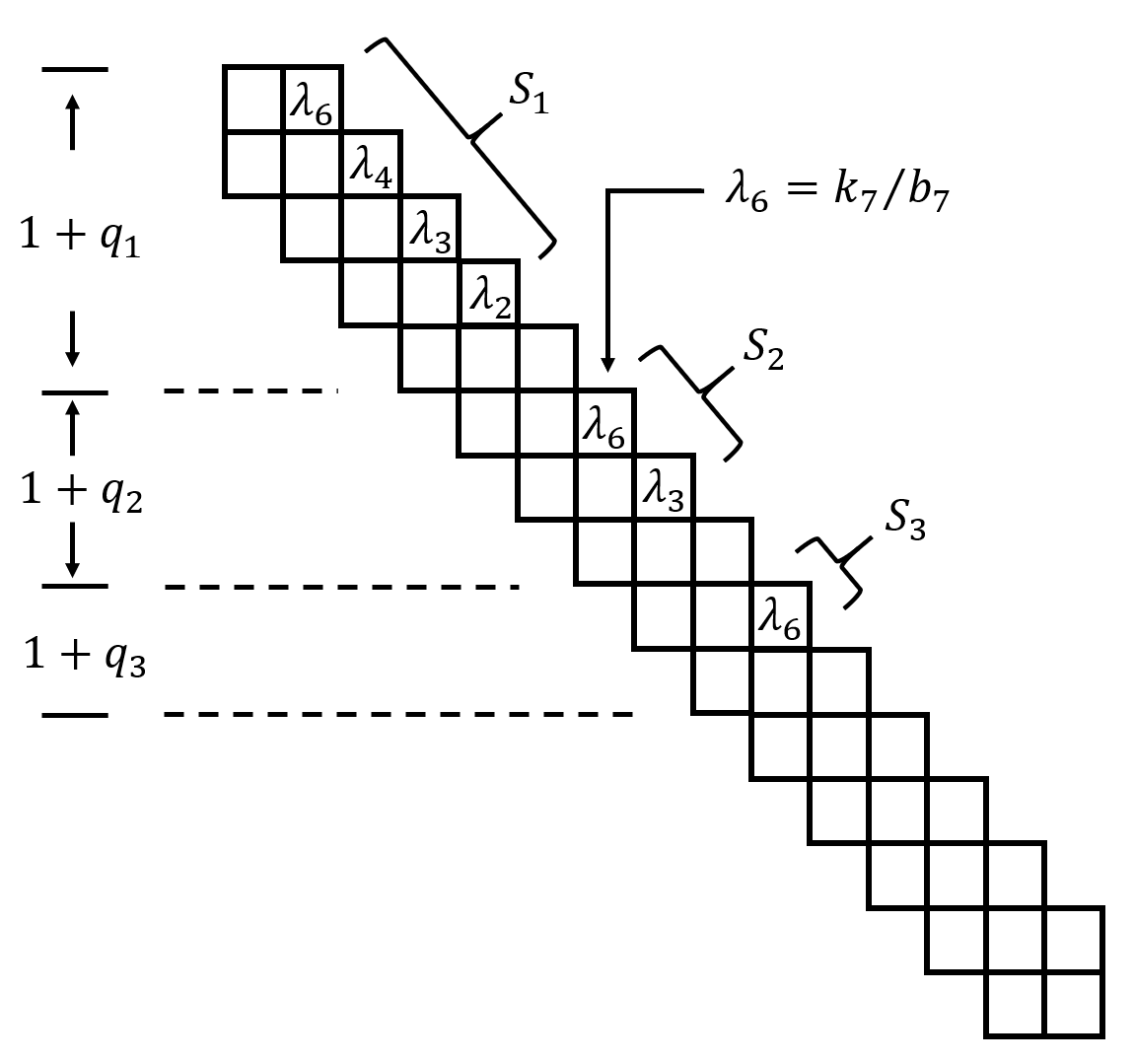}
\caption{Placement of $\lambda_i$ in the secondary diagonal of  matrix $\textbf{A}$}
\label{Fig.2}
\end{minipage}
\end{figure}

\end{example}
In general, the rule to determine $k_i/b_i\in \bigcup_{j=1}^{T-1}S_j$ is summarized as follows:
  \begin{eqnarray}\label{AS}
 \frac{k_i}{b_i} =
s_{j+1}(q_{j+1}-l+1) ,\qquad i=1+j+l,\quad j=0, \ldots, T-2,\quad l=1,\ldots, q_{j+1}.
\end{eqnarray}
The proof of Proposition \ref{gd} thus will be completed  in  three steps.\\
\textit{Step 1}: For each $i=1+j+l$ with $ j=0, \ldots, T-2$ and $ l=1,\ldots, q_{j+1}$,   we  assign $k_i/b_i$ a value taken from $\bigcup_{j=1}^{T-1}S_j$ in the light of \dref{AS}.
So, the cardinal of  set $\lbrace i\in [2,n]: \frac{k_{i}}{b_{i}}\not\in \bigcup_{j=1}^{T-1}S_j \rbrace$ is $(n-1)-(n-m)=m-1$.\\
\textit{Step 2}: Rewrite the elements of $\lbrace 1\rbrace\cup\lbrace i\in [2,n]: \frac{k_{i}}{b_{i}}\not\in \bigcup_{j=1}^{T-1}S_j \rbrace$ by $1=i_1<\cdots<i_{m}$. Let $m_{i_l}=M_{l}$ for  $l=1,\ldots,m$. \\
\textit{Step 3}: Based on the above steps, we compute $(f_i(\lambda),g_i(\lambda))$ for each $i=1,\ldots,n$. Then,  take some suitable parameters $(m_i,b_i,k_i)$, so that if $\frac{k_{i+1}}{b_{i+1}}\not\in \bigcup_{j=1}^{T-1}S_j$,
\begin{eqnarray}\label{fg1}
\left\{
\begin{array}{l}
\frac{f_{i+1}(\lambda)}{(f_{i+1}(\lambda),g_{i+1}(\lambda))}=-\lambda m_{i+1}\frac{g_{i+1}(\lambda)}{(f_{i+1}(\lambda),g_{i+1}(\lambda))}+(k_{i+1}-\lambda b_{i+1})\frac{f_{i}(\lambda)}{(f_{i}(\lambda),g_{i}(\lambda))}\\
\frac{g_{i+1}(\lambda)}{(f_{i+1}(\lambda),g_{i+1}(\lambda))}=\frac{f_{i}(\lambda)}{(f_{i}(\lambda),g_{i}(\lambda))}+(k_{i+1}-\lambda b_{i+1})\frac{g_{i}(\lambda)}{(f_{i}(\lambda),g_{i}(\lambda))}\\
\end{array},
\right.
\end{eqnarray}
otherwise, for  $\frac{k_{i+1}}{b_{i+1}}\in \bigcup_{j=1}^{T-1}S_j$,
\begin{eqnarray}\label{fg2}
\left\{
\begin{array}{l}
\frac{f_{i+1}(\lambda)}{(f_{i+1}(\lambda),g_{i+1}(\lambda))}=-\lambda m_{i+1}\frac{g_{i+1}(\lambda)}{(f_{i+1}(\lambda),g_{i+1}(\lambda))}-b_{i+1}\frac{f_{i}(\lambda)}{(f_{i}(\lambda),g_{i}(\lambda))}\\
(\lambda-\frac{k_{i+1}}{b_{i+1}})\frac{g_{i+1}(\lambda)}{(f_{i+1}(\lambda),g_{i+1}(\lambda))}=\frac{f_{i}(\lambda)}{(f_{i}(\lambda),g_{i}(\lambda))}+(k_{i+1}-\lambda b_{i+1})\frac{g_{i}(\lambda)}{(f_{i}(\lambda),g_{i}(\lambda))}
\end{array}.
\right.
\end{eqnarray}
 The construction of  $\lbrace \frac{f_{i}(\lambda)}{(f_{i}(\lambda),g_{i}(\lambda))},\frac{g_{i}(\lambda)}{(f_{i}(\lambda),g_{i}(\lambda))}\rbrace_{i=1}^{n}$ 
 will be achieved by an induction method
 from $n$ to $1$.

To proceed  the proof, we first derive some technical lemmas.
\begin{lemma}\label{lemma1}
Let $F(\lambda)=\mu\prod_{i=1}^{p}(\lambda-\alpha_i)$ and  $G(\lambda)=\nu\prod_{i=1}^{p}(\lambda-\beta_i)$  be two polynomials of degree $p$ that $G(\lambda)\ll F(\lambda)$, where
$|\mu|>|\nu|$,
$\alpha_1<\cdots<\alpha_p$ and $\beta_1<\cdots<\beta_p$. Then, $F(\lambda)-G(\lambda)$ has $p$ real roots $\gamma_1<\cdots<\gamma_p$ satisfying $\gamma_{i}\in(\alpha_i,\beta_{i+1})$ for $i=1,\ldots,p-1$ and $\gamma_p>\alpha_{p}$. Moreover, the following two statements hold:\\
(i) when $p>1$, for any $\eta\in (0,\min_{1\leqslant i\leqslant p-1}(\alpha_{i+1}-\alpha_i))$, if
\begin{eqnarray}\label{u/v}
\dfrac{|\nu|}{|\mu|}<\ \frac{\min\lbrace 1,\left(\min_{1\leqslant i\leqslant p-1}(\alpha_{i+1}-\alpha_i)-\eta\right)^{p}\rbrace\min\lbrace 1, \eta^p\rbrace}{2+2\max_{j\in[1,p]}\left|\prod_{i=1}^{p}(\alpha_j-\beta_i)\right|}
\end{eqnarray}
 then
\begin{eqnarray}\label{xyu}
\max_{1\leqslant i\leqslant p}(\gamma_i-\alpha_i)<\eta;
\end{eqnarray}
(ii)
when $p=1$, for any $\eta\in (0,\frac{1}{2})$, \dref{xyu} holds provided that
\begin{eqnarray}\label{v/uii}
|\nu|<\frac{|\mu|}{2} \min\left\lbrace \frac{\eta}{\alpha_1-\beta_1},1\right\rbrace.
\end{eqnarray}
\end{lemma}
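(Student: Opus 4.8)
\emph{Proof plan.} Since $G\ll F$ forces the leading coefficients $\mu,\nu$ to share a sign, the plan is first to assume $\mu>\nu>0$ (otherwise replace $(F,G)$ by $(-F,-G)$) and to put $H:=F-G$ and $\delta:=\min_{1\le l\le p-1}(\alpha_{l+1}-\alpha_l)$; then $\deg H=p$ with leading coefficient $\mu-\nu>0$, and by Definition \ref{f<<g} the roots interlace as $\beta_1<\alpha_1<\cdots<\beta_p<\alpha_p$. For the bracketing statement I would simply count signs of $H$ at these points: from $F(\alpha_i)=0$ and $G(\beta_i)=0$, together with the interlacing (which makes $p-i$ of the factors $\alpha_i-\beta_l$ negative and $p-i+1$ of the factors $\beta_i-\alpha_l$ negative), one gets $\sgn H(\alpha_i)=\sgn(-G(\alpha_i))=(-1)^{p-i+1}$ and $\sgn H(\beta_i)=\sgn F(\beta_i)=(-1)^{p-i+1}$. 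Hence $H(\alpha_i)$ and $H(\beta_{i+1})$ have opposite signs for $1\le i\le p-1$, yielding a root $\gamma_i\in(\alpha_i,\beta_{i+1})$, while $H(\alpha_p)<0$ together with $H(\lambda)\to+\infty$ yields $\gamma_p\in(\alpha_p,+\infty)$. These $p$ roots lie in pairwise disjoint intervals, so --- $\deg H$ being exactly $p$ --- they are simple and are all the roots of $H$.

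The two quantitative claims then hinge on one reduction: \emph{given the bracketing, $\gamma_i-\alpha_i<\eta$ holds whenever $\sgn H(\alpha_i+\eta)=-\sgn H(\alpha_i)$.} Indeed, such a sign flip produces a root of $H$ in $(\alpha_i,\alpha_i+\eta)$, and that root must be $\gamma_i$, because $\gamma_{i-1}<\beta_i<\alpha_i$ while the hypothesis $\eta<\delta$ forces $\gamma_{i+1}>\alpha_{i+1}\ge\alpha_i+\delta>\alpha_i+\eta$. Since $\eta<\delta$ also gives $\alpha_i+\eta\in(\alpha_i,\alpha_{i+1})$ for $i<p$, one has $\sgn F(\alpha_i+\eta)=(-1)^{p-i}=-\sgn H(\alpha_i)$ (and for $i=p$, $F(\alpha_p+\eta)>0$, so again $\sgn F(\alpha_p+\eta)=-\sgn H(\alpha_p)$). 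Consequently the required sign flip is implied by the single inequality $|F(\alpha_i+\eta)|>|G(\alpha_i+\eta)|$, and the whole quantitative part reduces to deriving this inequality from \dref{u/v} when $p>1$, and from \dref{v/uii} when $p=1$.

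For $p>1$ I would bound the two sides separately. Writing $|F(\alpha_i+\eta)|=|\mu|\,\eta\prod_{l\neq i}|\alpha_i+\eta-\alpha_l|$, the factor coming from the root $\alpha_i$ equals $\eta$, and each factor with $l\neq i$ is $\ge\delta-\eta>0$; an elementary comparison then gives $|F(\alpha_i+\eta)|\ge|\mu|\min\{1,\eta^p\}\min\{1,(\delta-\eta)^p\}$. The harder half --- and the step I expect to be the main obstacle --- is a matching upper bound for $|G(\alpha_i+\eta)|=|\nu|\prod_l|\alpha_i+\eta-\beta_l|$: I would compare $G(\alpha_i+\eta)$ with $G(\alpha_j)$ for a suitably chosen $j$ (generically $j=i$, but a neighbouring index when $|\alpha_i-\beta_i|$ is anomalously small), estimating the ratio factor by factor --- for $l\notin\{i,i+1\}$ one has $|\alpha_i-\beta_l|>\delta>\eta$, so such a factor moves by less than a factor $2$, while $\eta<\delta$ controls the one or two remaining factors --- to reach a bound of the shape $|G(\alpha_i+\eta)|\le|\nu|\,(2+2\max_{1\le j\le p}|\prod_{l=1}^{p}(\alpha_j-\beta_l)|)$. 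Inserting both bounds into \dref{u/v} then closes $|F(\alpha_i+\eta)|>|G(\alpha_i+\eta)|$, hence \dref{xyu}. The case $p=1$ of part (ii) is immediate once one notes $\gamma_1=\alpha_1+\nu(\alpha_1-\beta_1)/(\mu-\nu)$, so that $\gamma_1-\alpha_1<\eta$ is equivalent to $\nu/\mu<\eta/(\alpha_1-\beta_1+\eta)$, and a two-line case split on whether $\eta\le\alpha_1-\beta_1$ shows that \dref{v/uii} implies it. The only genuinely delicate point I anticipate is the bookkeeping that squeezes the $|G(\alpha_i+\eta)|$ estimate into exactly the constant appearing in the denominator of \dref{u/v}.
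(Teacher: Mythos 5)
Your bracketing argument and your treatment of the case $p=1$ are correct (the latter is essentially the paper's own computation, which reads off $\gamma_1-\alpha_1=\frac{\nu(\alpha_1-\beta_1)}{\mu-\nu}$). The reduction in part (i) to the sign flip $\sgn H(\alpha_i+\eta)=-\sgn H(\alpha_i)$, and hence to the single inequality $|F(\alpha_i+\eta)|>|G(\alpha_i+\eta)|$, is also sound, as is your lower bound on $|F(\alpha_i+\eta)|$. The gap is exactly where you predicted it: the upper bound $|G(\alpha_i+\eta)|\le|\nu|\bigl(2+2\max_{1\le j\le p}\bigl|\prod_{l=1}^{p}(\alpha_j-\beta_l)\bigr|\bigr)$ is false in general, and no factor-by-factor comparison with $G(\alpha_j)$ can rescue it, because $\max_j|G(\alpha_j)|$ simply does not control $|G|$ at a point $\eta$ to the right of an $\alpha_i$. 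Concretely, take $p=3$, $\beta=(0,10,20)$, $\alpha=(9.99,\,19.99,\,20.5)$, so that $\min_l(\alpha_{l+1}-\alpha_l)=0.51$, and take $\eta=0.5$, $i=3$. Then $\prod_l|\alpha_3+\eta-\beta_l|=21\cdot 11\cdot 1=231$, whereas $\max_j\prod_l|\alpha_j-\beta_l|=20.5\cdot 10.5\cdot 0.5=107.625$, so $2+2\max_j|\cdots|=217.25<231$. (The lemma itself survives here because \dref{u/v} is vastly stronger than what is needed at this one point, but your two-sided bounding scheme cannot certify it.)

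The paper avoids evaluating $G$ off the nodes $\alpha_j$ altogether. It writes $F(\lambda)-G(\lambda)=(\mu-\nu)\prod_{i=1}^{p}(\lambda-\gamma_i)$ and substitutes $\lambda=\alpha_j$, where $F(\alpha_j)=0$ gives the exact identity
$(\mu-\nu)\prod_{i=1}^{p}(\alpha_j-\gamma_i)=-\nu\prod_{i=1}^{p}(\alpha_j-\beta_i)$,
whose right-hand side is precisely the quantity appearing in the denominator of \dref{u/v}. Then, arguing by contradiction with $l$ the smallest index for which $\gamma_l-\alpha_l\ge\eta$, it bounds $\bigl|\prod_i(\alpha_l-\gamma_i)\bigr|$ from below by $\bigl(\min_i(\alpha_{i+1}-\alpha_i)-\eta\bigr)^{l-1}(\gamma_l-\alpha_l)^{p-l+1}$ (using $\gamma_i<\alpha_i+\eta$ for $i<l$ and $\gamma_i>\alpha_l$ for $i\ge l$) and from above by $2|\nu/\mu|\max_j\bigl|\prod_i(\alpha_j-\beta_i)\bigr|$ via the identity, which contradicts \dref{u/v}. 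If you want to keep your sign-flip framework, you would have to replace your $|G(\alpha_i+\eta)|$ estimate by something derived from this identity; as written, the step is not just delicate but wrong.
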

\begin{proof}
Note that  $G(\lambda)\ll F(\lambda)$ indicates  $\mu\nu>0$ and for each $j=1,\ldots,p-1$,
\begin{eqnarray*}
(F(\alpha_j)-G(\alpha_j))(F(\beta_{j+1})-G(\beta_{j+1}))=-\mu\nu\prod_{i=1}^{p}(\alpha_j-\beta_i)(\beta_{j+1}-\alpha_i)<0,
\end{eqnarray*}
which means $F(\lambda)-G(\lambda)$ has a  root $\gamma_j$ in $(\alpha_j, \beta_{j+1})$. Further, since $\mu\nu>0$ and $|\mu|>|\nu|$,
$
\mu(F(\lambda)-G(\lambda))>0
$
holds for all sufficiently large $\lambda>\alpha_p$.  On the other hand, $$\mu(F(\alpha_p)-G(\alpha_p))=-\mu\nu\prod_{i=1}^{p}(\alpha_p-\beta_i)<0,$$
so $F(\lambda)-G(\lambda)$ has a root $\gamma_p$ in $(\alpha_p, \infty)$. Clearly,  $\gamma_1<\cdots<\gamma_p$.

Next, we show statement (i). Observe that $F(\lambda)-G(\lambda)=(\mu-\nu)\prod_{i=1}^{p}(\lambda-\gamma_{i})$, then for each $j\in[1,p]$,
\begin{eqnarray}\label{aj-gi}
(\mu-\nu)\prod_{i=1}^{p}(\alpha_j-\gamma_i)=F(\alpha_j)-G(\alpha_j)=-\nu\prod_{i=1}^{p}(\alpha_j-\beta_i).
\end{eqnarray}
Let $p>1$. If 
\dref{xyu} fails, denote $l$ as the smallest subscript $i\in [1,p]$ such that $\gamma_i-\alpha_i\geq\eta$. Hence,
$$
0<\alpha_{i+1}-\alpha_i-\eta\leq \alpha_{i+1}-\gamma_i\leq \alpha_l-\gamma_i,\quad i=1,\ldots,l-1.
$$
Moreover, it is clear that $\gamma_i-\alpha_l \geq \gamma_l-\alpha_l>0$ for all $i\geq l$, then by \dref{u/v} and \dref{aj-gi},
\begin{eqnarray*}
&&\left(\min_{1\leqslant i\leqslant p-1}(\alpha_{i+1}-\alpha_i)-\eta\right)^{l-1}(\gamma_{l}-\alpha_l)^{p-l+1}\\
&\leq & \left|\prod_{i=1}^{p}(\alpha_l-\gamma_i)\right|\leq\left|\frac{\nu/\mu}{1-\nu/\mu}\right|\max_{j\in[1,p]}\left|\prod_{i=1}^{p}(\alpha_j-\beta_i)\right|\nonumber\\
&<&2\left|\frac{\nu}{\mu}\right|\max_{j\in[1,p]}\left|\prod_{i=1}^{p}(\alpha_j-\beta_i)\right|<\min\left\lbrace 1,\left(\min_{1\leqslant i\leqslant p-1}(\alpha_{i+1}-\alpha_i)-\eta\right)^{p}\right\rbrace\min\lbrace 1, \eta^p\rbrace\nonumber\\
&\leq &\left(\min_{1\leqslant i\leqslant p-1}(\alpha_{i+1}-\alpha_i)-\eta\right)^{l-1}\eta^{p-l+1},
\end{eqnarray*}
which contradicts to  $\gamma_l-\alpha_l\geq\eta$. So, statement (i) is true.

When $p=1$, \dref{v/uii} and \dref{aj-gi} lead to
\begin{eqnarray}
\gamma_1-\alpha_1\leq \left|\frac{\nu/\mu}{1-\nu/\mu}\right|(\alpha_1-\beta_1)<2\left|\frac{\nu}{\mu}\right|(\alpha_1-\beta_1)<\eta.\nonumber
\end{eqnarray}
The statement (ii) is proved.
\end{proof}

The subsequent parts  focus on  Step 3 of the construction,
whose key idea is to select some appropriate candidates for the roots of $\frac{g_{n}(\lambda)}{(f_{n}(\lambda),g_{n}(\lambda))}$. We set these roots  as  $\lambda_i+\rho_i$, $i=1,\ldots,m-1$, where $\rho_{i}=\varepsilon^{(n+1)^{m-i}}$, 
\begin{eqnarray}\label{vDelta}
\varepsilon=\frac{\Delta^{n^2+n+1}}{n 2^{3(n+1)^3}\Lambda^{(n+1)^2}},\quad \Delta\triangleq \frac{1}{2}\min\lbrace 1,\min_{1\leq i\leq m-1}(\lambda_{i+1}-\lambda_i)\rbrace, \quad \Lambda\triangleq 1+\lambda_m.
\end{eqnarray}
Next, define
\begin{eqnarray}\label{C1C2}
C_1\triangleq\frac{\Delta}{2^{n+1}\Lambda}\quad\mbox{and}\quad
C_2(j)\triangleq\frac{2^{2(n+1)^2}\Lambda^{n+1}}{\Delta^{n}\varepsilon^{(n+1)^{m-j-1}}},\quad j=1,\ldots,m-1,
\end{eqnarray}
as well as
\begin{eqnarray}\label{MC}
 C\triangleq\frac{2^{2n+1}(1+\Lambda^n)}{C_1^{2n^2}\rho_1^{2n}}\quad\mbox{and}\quad  M\triangleq\sum_{k=1}^{m}M_k.
\end{eqnarray}

\begin{remark}\label{constants}
We remark that $\varepsilon<1$,  $\Lambda/\Delta\geq 2$ in \dref{vDelta} and $C>1$ in \dref{MC}. Moreover,
 $C_2(j), j=1,\ldots,m-1$  defined by \dref{C1C2} satisfy (see Appendix \ref{appc})
\begin{eqnarray}\label{ib}
\left\{
\begin{array}{ll}
(1+C_2(m-1))^n \rho_{m-1}<\frac{\Delta}{2},\\
(1+C_2(j))^n \rho_j<(1+C_2(j+1))^n \rho_{j+1},& j=1,\ldots, m-2,\quad m>2\\
\frac{n(1+C_{2}(j-1))^n }{\Delta}\rho_{j-1}<\frac{1}{4}\left(\frac{\Delta^n}{2^{(n+1)^2}\Lambda^{n+1}}\right)\rho_{j+1},& j=2,\ldots,m-2,\quad m>3
\end{array}.
\right.
\end{eqnarray}
\end{remark}

The above series of constants are repeatedly used in the next two lemmas (Lemmas \ref{cas1}--\ref{cas2}), whose
 proofs 
 are contained in  Appendix \ref{appb}. Both the two  lemmas concern the following  polynomials
\begin{eqnarray}\label{FG}
F(\lambda)=\prod_{i=1}^{p}(\lambda-\alpha_i)\quad\mbox{and}\quad G(\lambda)=\prod_{i=1}^{p-1}(\lambda-\beta_i),\qquad p\in [2,m],
\end{eqnarray}
whose roots $\{\a_i\}_{i=1}^p$ and $\{\b_i\}_{i=1}^{p-1}$ satisfy
\begin{eqnarray}\label{aplam}
\alpha_{p}\in[\lambda_p, \bar \lambda)\qquad \mbox{for some number }\bar \lambda>\lambda_p
\end{eqnarray}
and for each $i=1,\ldots,p-1$,
\begin{eqnarray}\label{zcs}
\lambda_{i}+C_{1}^{n}\rho_{i}\leq\alpha_{i}+C_{1}^{n}\rho_i<\beta_{i}<\lambda_{i}+(1+C_2(i))^{n}\rho_i<\lambda_{i+1}.
\end{eqnarray}

\begin{lemma}\label{cas1}
Let $F(\lambda)$ and  $G(\lambda)$ be two polynomials defined by \dref{FG}--\dref{zcs}  with
$\bar \lambda=\Lambda$ in  \dref{aplam}.
For any given constants  $\mu, \nu, m^{*}$ satisfying $0<m^{*}<M$ and
$
-\frac{\mu}{\nu}>CM$, the following two statements hold.\\
(i) If $p>2$, then there exist two monic  polynomials $ F_0(\lambda)$ and $ G_{0}(\lambda)$ with  distinct roots $\alpha_1'<\cdots<\alpha_{p-1}'$ and $\beta_{1}'<\cdots<\beta_{p-2}'$, respectively, satisfying $\alpha_{p-1}'\in(\alpha_{p-1}, \lambda_{p})$ and for $i\in[1,p-2]$,
 \begin{eqnarray}\label{sxb}
\alpha_{i}'\in(\alpha_i,\beta_i),\qquad C_1(\beta_{i}-\alpha_{i})\leq \beta_{i}'-\alpha_{i}'\leq C_2(i)(\beta_{i}-\alpha_{i}).
\end{eqnarray}
 In addition, for some numbers $\lambda^{*},b^{*}>0$ and $\mu_0, \nu_0$ with $-\frac{\mu_0}{\nu_0}>-\frac{\mu}{\nu}-m^{*}$, $ F_0(\lambda)$ and $ G_{0}(\lambda)$ fulfill
\begin{eqnarray}\label{cas1e}
\left\{
\begin{array}{l}
\mu F(\lambda)=- m^{*}\nu \lambda G(\lambda)+b^{*}\mu_0(\lambda^{*}-\lambda) F_0(\lambda)\\
\nu  G(\lambda)=\mu_0 F_0(\lambda)+b^{*}\nu_{0}(\lambda^{*}-\lambda) G_{0}(\lambda)
\end{array}.
\right.
\end{eqnarray}
(ii) If $p=2$, then there are some numbers $\lambda^{*},b^{*}>0$,  $\alpha_{1}'\in(\alpha_{1}, \lambda_{2})$ and  $\mu_0, \nu_0$ with $-\frac{\mu_0}{\nu_0}>-\frac{\mu}{\nu}-m^{*}$  such that polynomials $F_{0}(\lambda)=\lambda-\alpha_{1}'$ and $G_{0}(\lambda)=1$ satisfy equation \dref{cas1e}.
\end{lemma}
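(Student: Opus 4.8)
The plan is to construct the polynomials $F_0$ and $G_0$ by \emph{reverse-engineering} the recursion \dref{LS}. Rewriting the two equations of \dref{cas1e} as a linear system in the unknown pair $(F_0(\lambda), G_0(\lambda))$ (with the scalar parameters $m^{*},b^{*},\lambda^{*},\mu_0,\nu_0$ still free), one eliminates $G_0$ and finds that $F_0(\lambda)$ must be proportional to $\mu F(\lambda) + m^{*}\nu\lambda G(\lambda)$ divided by $(\lambda^{*}-\lambda)$; likewise $G_0(\lambda)$ is obtained from the second equation. Thus the core of the argument is: (a) show that $H(\lambda)\triangleq \mu F(\lambda) + m^{*}\nu\lambda G(\lambda)$ has a real root $\lambda^{*}>\lambda_p$ (so that $(\lambda^{*}-\lambda)\mid H(\lambda)$ and $F_0(\lambda)\triangleq H(\lambda)/(\mu_0 b^{*}(\lambda^{*}-\lambda))$ is a genuine monic polynomial of degree $p-1$), and (b) show that $H$ has $p-1$ further real roots $\alpha_1'<\cdots<\alpha_{p-1}'$ obeying the interlacing and gap bounds \dref{sxb}, and that the resulting $G_0$ has the stated degree $p-2$ and interlaces $F_0$.

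First I would establish (a): since $-\mu/\nu > CM > m^{*}$ and $C>1$, $\mu$ and $\nu$ have opposite signs, and one checks the sign of $H$ at $\lambda=\alpha_p$ versus $\lambda\to+\infty$. At $\lambda=\alpha_p\in[\lambda_p,\Lambda)$ we have $H(\alpha_p)=m^{*}\nu\alpha_p G(\alpha_p)$, whose sign is controlled by $G\ll F$; for large $\lambda$, $H(\lambda)\sim \mu\lambda^{p}$ dominates because $|\mu|>|\nu|m^{*}$ (again from $-\mu/\nu>CM>M>m^{*}$). This sign change forces a root $\lambda^{*}>\alpha_p\geq\lambda_p$, as required by \dref{aplam}-type bookkeeping. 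Then (b) is where Lemma \ref{lemma1} does the real work: I would apply it to the pair $F(\lambda)$ and $G^{*}(\lambda)\triangleq -\tfrac{m^{*}\nu}{\mu}\lambda G(\lambda)$ (after normalizing $H=\mu(F-G^{*})$), noting that $G^{*}\ll F$ because multiplying $G$ by $-\lambda$ and a positive constant preserves the relevant interlacing (this is essentially the $b_{j+1}=0$ mechanism in Lemma \ref{two}(iii)), and that the leading-coefficient ratio $|{-m^{*}\nu/\mu}| = m^{*}|\nu/\mu| < 1$ is \emph{small} — quantitatively bounded by the right-hand side of \dref{u/v} for the appropriate choice of $\eta$ — precisely because $-\mu/\nu > CM$ and $C$ was defined in \dref{MC} with exactly the powers of $C_1$, $\rho_1$, $\Lambda$ needed. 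Lemma \ref{lemma1}(i) then yields roots $\gamma_i=\alpha_i'\in(\alpha_i,\beta_i)$ with $\gamma_i-\alpha_i$ as small as we like, which, combined with the lower bound $\alpha_i+C_1^n\rho_i<\beta_i$ from \dref{zcs}, gives both inequalities in \dref{sxb}; the top root $\gamma_{p-1}=\alpha_{p-1}'$ lands in $(\alpha_{p-1},\lambda_p)$ using $\alpha_{p-1}+(1+C_2(p-1))^n\rho_{p-1}<\lambda_p$. The case $p=2$ is the degenerate instance handled by Lemma \ref{lemma1}(ii), giving $F_0(\lambda)=\lambda-\alpha_1'$, $G_0(\lambda)=1$ directly.

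Once $F_0$, $\lambda^{*}$, $\mu_0$, $b^{*}$ are fixed, the second equation of \dref{cas1e} \emph{defines} $b^{*}\nu_0(\lambda^{*}-\lambda)G_0(\lambda) \triangleq \nu G(\lambda) - \mu_0 F_0(\lambda)$; I would verify this right-hand side is divisible by $(\lambda^{*}-\lambda)$ (equivalently $\nu G(\lambda^{*})=\mu_0 F_0(\lambda^{*})$, which holds automatically since $F_0(\lambda^{*})=0$ forces us to instead read off $\nu_0$ and $b^{*}$ from matching degrees and leading/constant coefficients), that $\deg G_0=p-2$, that $G_0\ll F_0$ (another interlacing check, inherited from $G\ll F$ and the smallness of the perturbations), and finally that $-\mu_0/\nu_0 > -\mu/\nu - m^{*}$. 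The last inequality is the delicate one: it says the ratio is depleted by at most $m^{*}$ at each step, and it is what makes the eventual induction in Step 3 of the construction close. I expect the main obstacle to be this bookkeeping on $-\mu_0/\nu_0$ together with verifying that all the quantitative gap estimates \dref{sxb} survive simultaneously — it requires threading the precise constants $C_1$, $C_2(i)$, $C$, $\varepsilon$, $\rho_i$ of \dref{vDelta}--\dref{MC} through Lemma \ref{lemma1}, and is exactly why those constants were defined with such specific exponents; the inequalities collected in Remark \ref{constants} (proved in Appendix \ref{appc}) are what I would invoke to discharge these estimates cleanly.
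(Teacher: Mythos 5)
Your architecture is the same as the paper's: pick $\lambda^{*}>\alpha_p$ as a root of $H(\lambda)=\mu F(\lambda)+m^{*}\nu\lambda G(\lambda)$ (the sign-change argument you give is exactly the paper's intermediate-value step), set $F_0(\lambda)=H(\lambda)/((\mu+m^{*}\nu)(\lambda-\lambda^{*}))$, locate its roots by applying Lemma \ref{lemma1} to $\mu F$ and $-m^{*}\nu\lambda G$, then read off $G_0,\mu_0,\nu_0,b^{*}$ from the second equation of \dref{cas1e}. One local error: your parenthetical ``$F_0(\lambda^{*})=0$'' is false. The roots of $F_0$ are $\alpha_1'<\cdots<\alpha_{p-1}'<\lambda_p\le\alpha_p<\lambda^{*}$, so $F_0(\lambda^{*})\neq 0$, and the divisibility of $\nu G(\lambda)-\mu_0F_0(\lambda)$ by $(\lambda-\lambda^{*})$ is not automatic; it is precisely why one must \emph{choose} $\mu_0=\nu G(\lambda^{*})/F_0(\lambda^{*})$. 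That choice also delivers $\mu_0\nu>0$ and $|\mu_0|<|\nu|$ (from $F_0\ll G$), which is what makes $G_0$ well defined and gives $-\mu_0/\nu_0=-\frac{\mu+m^{*}\nu}{\nu}\cdot\frac{1}{1-\mu_0/\nu}>-\frac{\mu}{\nu}-m^{*}$ in one line — this part is not the delicate one.

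The genuine gap is the upper bound $\beta_i'-\alpha_i'\le C_2(i)(\beta_i-\alpha_i)$ in \dref{sxb}, which cannot be obtained the way you propose. The first application of Lemma \ref{lemma1} controls only the $\alpha_i'$ (there the leading-coefficient ratio $m^{*}|\nu/\mu|<1/C$ really is tiny). But $\beta_i'$ comes from a second application, to $\nu G$ and $\mu_0F_0$, where the relevant ratio is $|\mu_0/\nu|=G(\lambda^{*})/F_0(\lambda^{*})$ — merely less than $1$, not small — so Lemma \ref{lemma1} cannot force $\beta_i'-\beta_i$ below $(C_2(i)-1)(\beta_i-\alpha_i)$, a quantity that may be far smaller than $\Delta$ since $\beta_i-\alpha_i$ can be as small as $C_1^{n}\rho_i$. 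The paper's mechanism is different: it evaluates the exact identity $F_0(\beta_i')=\frac{F_0(\lambda^{*})}{G(\lambda^{*})}\prod_{j}(\beta_i'-\beta_j)$ (equation \dref{bi-aj}) to derive the multiplicative lower bound $\frac{\beta_i'-\alpha_i'}{\beta_i'-\beta_i}>1+\frac{\Delta^{n}}{2^{(n+1)^2+1}\Lambda^{n+1}}\rho_{i+1}$ via \dref{fbe}, \dref{ben}, the Bernoulli inequality and Remark \ref{constants}, and only then inverts this into $\frac{\beta_i'-\alpha_i'}{\beta_i-\alpha_i'}\le 1+\frac{2^{(n+1)^2+1}\Lambda^{n+1}}{\Delta^{n}\rho_{i+1}}<C_2(i)$. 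That ratio identity and the chain of estimates built on it are the heart of the lemma and are absent from your sketch; without them the second inequality of \dref{sxb}, and hence the induction hypothesis \dref{zcs} needed at the next step of Lemma \ref{lemma21}, does not close.
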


\begin{lemma}\label{cas2}
Given  $\mu, \nu, \lambda^{*}$ with $\lambda_p<\lambda^{*}<\Lambda$ and
$
\frac{\mu}{\nu}<0$, the following two statements hold.\\
(i) For polynomials $F(\lambda)$ and  $G(\lambda)$  defined by \dref{FG}--\dref{zcs}  with
$\bar \lambda=\lambda^*$ in  \dref{aplam}, there exist two monic  polynomials $ F_0(\lambda)$ and $ G_{0}(\lambda)$ with  distinct roots $\alpha_1'<\cdots<\alpha_{p}'$ and $\beta_{1}'<\cdots<\beta_{p-1}'$, respectively, such that $\alpha_{p}'=\lambda^{*}$ and \dref{sxb} holds  for all $i\in[1,p-1]$. In addition, for some numbers  $m^{*},b^{*}>0$ and $\mu_0, \nu_0$ with $-\frac{\mu_0}{\nu_0}>-\frac{\lambda_1}{\Lambda}\frac{\mu}{\nu}$, $F_0(\lambda)$ and $ G_{0}(\lambda)$ fulfill
\begin{eqnarray}\label{cas2e}
\left\{
\begin{array}{l}
\mu F(\lambda)=- m^{*}\nu\lambda G(\lambda)-b^{*}\mu_0 F_0(\lambda)\\
\nu (\lambda-\lambda^{*})G(\lambda)=\mu_0 F_0(\lambda)+b^{*}\nu_{0}(\lambda^{*}-\lambda) G_{0}(\lambda)\\
\end{array}.
\right.
\end{eqnarray}
(ii) For $F(\lambda)=\lambda-\alpha_1$ with $\alpha_{1}<\lambda^{*}$  and $G(\lambda)=1$, there are  some numbers $m^{*},b^{*}>0$ and $\mu_0, \nu_0$ with  $-\frac{\mu_0}{\nu_0}>-\frac{\lambda_1}{\Lambda}\frac{\mu}{\nu}$ such that polynomials  $ F_0(\lambda)=\lambda-\lambda^{*}$ and $ G_{0}(\lambda)=1$ satisfy equation \dref{cas2e}.
\end{lemma}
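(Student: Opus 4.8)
The plan is to prove Lemma \ref{cas2} (both parts) by running exactly the same ``one-step back-solve'' that underlies Lemma \ref{cas1}, but now for the node where $k_{i+1}/b_{i+1}=\lambda^{*}$ belongs to $\bigcup_j S_j$, i.e. where the recursion is governed by \dref{fg2} rather than \dref{fg1}. Concretely, I would treat \dref{cas2e} as a linear system to be solved for the unknown monic polynomials $F_0,G_0$ and the scalars $m^{*},b^{*},\mu_0,\nu_0$, given the data $F,G,\mu,\nu,\lambda^{*}$. From the first equation of \dref{cas2e}, $b^{*}\mu_0 F_0(\lambda)=-\mu F(\lambda)-m^{*}\nu\lambda G(\lambda)$; since $\mu/\nu<0$, for \emph{any} choice of $m^{*}>0$ the right-hand side is a degree-$p$ polynomial whose leading coefficient is $-\mu$, so up to the positive normalizing constant $b^{*}|\mu_0|$ it is monic of degree $p$, and I must show its $p$ roots $\alpha_1'<\cdots<\alpha_p'$ are real, simple, and correctly interlaced with the $\alpha_i$. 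The crucial design choice is to pick $m^{*}$ so that $\lambda^{*}$ becomes a root: evaluating at $\lambda^{*}$, $-\mu F(\lambda^{*})-m^{*}\nu\lambda^{*}G(\lambda^{*})=0$ forces $m^{*}=-\dfrac{\mu F(\lambda^{*})}{\nu\lambda^{*}G(\lambda^{*})}$, which is positive because $F(\lambda^{*})>0$ (as $\lambda^{*}>\lambda_p\geq\alpha_p$), $G(\lambda^{*})>0$, $\lambda^{*}>0$, and $\mu/\nu<0$. That pins $\alpha_p'=\lambda^{*}$ as required.

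**Root location and the interlacing estimate.** With $m^{*}$ fixed this way, I would locate the remaining roots $\alpha_1',\ldots,\alpha_{p-1}'$ by a sign-chase of $H(\lambda):=-\mu F(\lambda)-m^{*}\nu\lambda G(\lambda)$ at the points $\alpha_i$ and $\beta_i$, exactly as in the proof of Lemma \ref{two}: the hypothesis $G(\lambda)\ll F(\lambda)$ (encoded in \dref{zcs}) gives $\sgn H(\alpha_i)$ and $\sgn H(\beta_i)$ alternating, producing one root of $H$ in each interval $(\alpha_i,\beta_i)$ and hence $\alpha_i'\in(\alpha_i,\beta_i)$ for $i\leq p-1$, with $\alpha_{p-1}'<\alpha_p'=\lambda^{*}$; in particular $\alpha_p'\in(\alpha_p,\lambda^{*})$? — no, $\alpha_p'=\lambda^{*}$ exactly. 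Then I must produce $F_0,G_0$ satisfying \emph{also} the second equation of \dref{cas2e}; here $G_0$ is determined by back-solving $b^{*}\nu_0(\lambda^{*}-\lambda)G_0(\lambda)=\nu(\lambda-\lambda^{*})G(\lambda)-\mu_0 F_0(\lambda)$, and the factor $(\lambda-\lambda^{*})$ on the left must be matched by a corresponding factor on the right — this is automatic once $\alpha_p'=\lambda^{*}$, since then $F_0(\lambda^{*})=0$ and both sides vanish at $\lambda^{*}$. After dividing out $(\lambda-\lambda^{*})$, a second sign-chase (invoking Lemma \ref{one}/Lemma \ref{lemma1}) gives the roots $\beta_i'$ of $G_0$ interlacing the $\alpha_i'$. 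The bound $-\mu_0/\nu_0>-\frac{\lambda_1}{\Lambda}\frac{\mu}{\nu}$ is then a matter of tracking leading coefficients and the smallness guaranteed by \dref{u/v}–\dref{v/uii} of Lemma \ref{lemma1}, which is why the explicit constants $C_1,C_2(j)$ in \dref{C1C2} were chosen: they are precisely the tolerances making the quantitative hypotheses of Lemma \ref{lemma1} hold so that \dref{sxb} is inherited.

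**The degenerate base case.** Part (ii), with $F(\lambda)=\lambda-\alpha_1$, $\alpha_1<\lambda^{*}$, $G\equiv 1$, is the initialization of the backward induction and should be essentially a direct computation: set $F_0(\lambda)=\lambda-\lambda^{*}$, $G_0\equiv 1$, and solve the two scalar identities in \dref{cas2e} for $m^{*},b^{*},\mu_0,\nu_0$. The first equation $\mu(\lambda-\alpha_1)=-m^{*}\nu\lambda-b^{*}\mu_0(\lambda-\lambda^{*})$ matches coefficients of $\lambda$ and of $1$: $\mu=-m^{*}\nu-b^{*}\mu_0$ and $-\mu\alpha_1=b^{*}\mu_0\lambda^{*}$, giving $b^{*}\mu_0=-\mu\alpha_1/\lambda^{*}$ and then $m^{*}\nu=-\mu+\mu\alpha_1/\lambda^{*}=\mu(\alpha_1-\lambda^{*})/\lambda^{*}$, so $m^{*}=\frac{\mu}{\nu}\cdot\frac{\alpha_1-\lambda^{*}}{\lambda^{*}}>0$ since $\mu/\nu<0$ and $\alpha_1<\lambda^{*}$; the second equation $\nu(\lambda-\lambda^{*})=\mu_0(\lambda-\lambda^{*})+b^{*}\nu_0(\lambda^{*}-\lambda)$ then forces $\nu=\mu_0-b^{*}\nu_0$, leaving one free scaling which I use to make $b^{*}>0$ and to verify $-\mu_0/\nu_0>-\frac{\lambda_1}{\Lambda}\frac{\mu}{\nu}$.

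**Main obstacle.** The routine sign-chases and the base case are mechanical. The genuinely delicate part is the \emph{quantitative} control \dref{sxb}: showing the perturbed roots $\alpha_i',\beta_i'$ stay trapped in windows of width comparable to $\beta_i-\alpha_i$, with the two-sided constants $C_1$ and $C_2(i)$, uniformly as these quantities are themselves the exponentially-small $\rho_i=\varepsilon^{(n+1)^{m-i}}$. This is exactly where Lemma \ref{lemma1} is engineered to apply, so the work is to verify its hypotheses \dref{u/v} (resp.\ \dref{v/uii}) hold with the present $\mu_0,\nu_0$ and the gap $\eta$ chosen on the order of $C_2(i)\rho_i$ — i.e.\ checking that the ratio $|\nu_0/\mu_0|$ produced by the back-solve is small enough against the product of gap-powers in \dref{u/v}. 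The bookkeeping in \dref{vDelta}–\dref{ib} (which Remark \ref{constants} defers to Appendix \ref{appc}) is precisely what closes this; I would isolate it as the technical heart and relegate the explicit inequality verifications to Appendix \ref{appb} as the paper does.
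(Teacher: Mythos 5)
Your proposal follows essentially the same route as the paper's proof: $m^{*}$ is forced by requiring $\mu F(\lambda^{*})+m^{*}\nu\lambda^{*}G(\lambda^{*})=0$ so that $\alpha_p'=\lambda^{*}$, $F_0$ and $G_0$ are obtained by back-solving \dref{cas2e} with the interlacing and the two-sided bound \dref{sxb} supplied by Lemma \ref{lemma1}, and part (ii) is the same explicit coefficient match. The only point worth making explicit is that, unlike in Lemma \ref{cas1}, the divisibility by $(\lambda-\lambda^{*})$ in the second equation is automatic and therefore no longer pins down $\mu_0$; the paper exploits this by setting $\mu_0=\tau\nu$ with $\tau$ an explicitly small constant so that the hypothesis of Lemma \ref{lemma1} holds for the pair $\nu G(\lambda)$, $\mu_0 F_0(\lambda)/(\lambda-\lambda^{*})$ --- it is the ratio $|\mu_0/\nu|$, not $|\nu_0/\mu_0|$, that must be made small there.
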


\begin{lemma}\label{lemma21}
Let  $F_n(\lambda)=\prod_{i=1}^{m}(\lambda-\lambda_i)$ and  $G_n(\lambda)=\prod_{i=1}^{m-1}(\lambda-\lambda_i-\rho_i)$ be two polynomials  and $\mu_n, \nu_n$ be two numbers satisfying
\begin{eqnarray}\label{munvn}
-\frac{\mu_n}{\nu_n}>C\left(\frac{\Lambda}{\lambda_1}\right)^{n}\frac{\Lambda M}{\Lambda-\lambda_1}.
\end{eqnarray}
Then, there exist some monic polynomials $\lbrace F_j(\lambda)\rbrace_{j=1}^{n-1}$, $\lbrace G_{j}(\lambda)\rbrace_{j=1}^{n-1}$ and some  sequences of numbers $\lbrace (\lambda_{j}^{*}, b_{j}, m_j)\rbrace_{j=2}^{n}$, $\lbrace (\mu_j,\nu_j)\rbrace_{j=1}^{n-1}$ such that for each $j\in [1,n-1]$, the following two properties hold:\\
(i) $\lambda_{j+1}^{*}, b_{j+1}, m_{j+1}>0$ and  $-\frac{\mu_j}{\nu_{j}}>C(\frac{\Lambda}{\lambda_1})^j\frac{\Lambda M}{\Lambda-\lambda_1}$;\\
(ii) if $j\in [1,n-1]\setminus \bigcup_{h=0}^{T-2}[l+1+\sum_{h=1}^{l}q_{h},l+\sum_{h=1}^{l+1}q_{h}]$, then
\begin{eqnarray}\label{dt1}
\left\{
\begin{array}{l}
\mu_{j+1}F_{j+1}(\lambda)=- m_{j+1}\nu_{j+1}\lambda G_{j+1}(\lambda)+b_{j+1}\mu_j (\lambda_{j+1}^{*}-\lambda) F_j(\lambda)\\
\nu_{j+1}G_{j+1}(\lambda)=\mu_j F_j(\lambda)+b_{j+1}\nu_{j} (\lambda_{j+1}^{*}-\lambda) G_{j}(\lambda)
\end{array},
\right.
\end{eqnarray}
otherwise, for $j\in \bigcup_{l=0}^{T-2}[l+1+\sum_{h=1}^{l}q_{h},l+\sum_{h=1}^{l+1}q_{h}]$,
\begin{eqnarray}\label{dt2}
\left\{
\begin{array}{l}
\mu_{j+1}F_{j+1}(\lambda)=- m_{j+1}\nu_{j+1} \lambda G_{j+1}(\lambda)-b_{j+1}\mu_j F_j(\lambda)\\
\nu_{j+1}(\lambda-\lambda_{j+1}^{*})G_{j+1}(\lambda)=\mu_j  F_j (\lambda)+b_{j+1}\nu_{j}(\lambda_{j+1}^{*}-\lambda)G_{j}(\lambda)\\
\lambda_{j+1}^{*}=s_{l+1}(l+1+\sum_{h=1}^{l+1}q_{h}-j)
\end{array},
\right.
\end{eqnarray}
where $s_{l+1}(l+1+\sum_{h=1}^{l+1}q_{h}-j)\in S_{l+1}$ and $S_{l+1}$ is  defined by \dref{ST-1}.
\end{lemma}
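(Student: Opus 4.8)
The plan is a downward induction on $j$, producing $F_{n-1},G_{n-1},\ldots,F_1,G_1$ together with $\lambda_{j+1}^{*},b_{j+1},m_{j+1}$ and $\mu_j,\nu_j$, where at step $j$ the already-built pair $(F_{j+1},G_{j+1})$ is fed into Lemma \ref{cas1} or Lemma \ref{cas2}. Call $j\in\{1,\ldots,n-1\}$ a \emph{special index} if $j$ lies in $\bigcup_{l=0}^{T-2}[\,l+1+\sum_{h=1}^{l}q_h,\ l+\sum_{h=1}^{l+1}q_h\,]$—write block $l$ for the $l$-th interval of this union—and an \emph{ordinary index} otherwise; there are $\sum_l q_l=n-m$ of the former and $m-1$ of the latter. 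At a special index I apply Lemma \ref{cas2} and obtain \dref{dt2}; at an ordinary index I apply Lemma \ref{cas1} and obtain \dref{dt1}. Since $\deg F_j$ drops by one across an ordinary step and is unchanged across a special step, the degrees run $\deg F_1\le\deg F_2\le\cdots\le\deg F_n$ from $1$ up to $m$; in particular $n-1$ is always an ordinary index (were it special, $T\ge m+1$, which $t_i\le i$ forbids), so no ordinary step acts on a linear polynomial and no degree ever reaches $0$. A bookkeeping check with the block structure gives the sharper fact $\deg F_{j+1}=1+l$ whenever $j$ belongs to block $l$. The induction hypothesis carried from level $j+1$ to level $j$ is: $F_{j+1},G_{j+1}$ are monic and their roots satisfy \dref{aplam}--\dref{zcs} with $p=\deg F_{j+1}$ and with $\bar\lambda=\Lambda$ if step $j$ is ordinary, $\bar\lambda=\lambda_{j+1}^{*}$ if step $j$ is special, and $-\mu_{j+1}/\nu_{j+1}>C(\Lambda/\lambda_1)^{j+1}\Lambda M/(\Lambda-\lambda_1)$.

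For the base case, $F_n=\prod_{i=1}^m(\lambda-\lambda_i)$ and $G_n=\prod_{i=1}^{m-1}(\lambda-\lambda_i-\rho_i)$ are monic; $C_1<1$ gives $\alpha_i+C_1^n\rho_i=\lambda_i+C_1^n\rho_i<\lambda_i+\rho_i=\beta_i$, the first two lines of \dref{ib} give $(1+C_2(i))^n\rho_i\le(1+C_2(m-1))^n\rho_{m-1}<\Delta/2<\lambda_{i+1}-\lambda_i$, so \dref{zcs} holds for $i=1,\ldots,m-1$; as step $n-1$ is ordinary only \dref{aplam} with $\bar\lambda=\Lambda$ is needed, and $\alpha_m=\lambda_m<1+\lambda_m=\Lambda$; and \dref{munvn} is exactly the required bound on $\mu_n/\nu_n$.

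For the inductive step, assume the hypothesis at level $j+1$. If $j$ is ordinary, apply Lemma \ref{cas1}(i) when $\deg F_{j+1}>2$ and (ii) when $\deg F_{j+1}=2$, with $(\mu,\nu)=(\mu_{j+1},\nu_{j+1})$ and $m^{*}=m_{j+1}$ chosen freely in $(0,M)$ (a freedom used later in Proposition \ref{gd} to prescribe the masses); this produces $F_j,G_j$, $\lambda_{j+1}^{*}=\lambda^{*}$, $b_{j+1}=b^{*}$, $(\mu_j,\nu_j)=(\mu_0,\nu_0)$ and identity \dref{dt1}. If $j$ is special, in block $l$, apply Lemma \ref{cas2}(i) when $\deg F_{j+1}\ge2$ and (ii) when $\deg F_{j+1}=1$, with $\lambda^{*}=\lambda_{j+1}^{*}=s_{l+1}(l+1+\sum_{h=1}^{l+1}q_h-j)\in S_{l+1}$; its hypothesis $\lambda_{\deg F_{j+1}}<\lambda_{j+1}^{*}<\Lambda$ holds because the hypothesis at level $j+1$ (with $\bar\lambda=\lambda_{j+1}^{*}$) already puts the top root of $F_{j+1}$ in $[\lambda_{\deg F_{j+1}},\lambda_{j+1}^{*})$ and $\lambda_{j+1}^{*}\le\lambda_m<\Lambda$; this produces \dref{dt2}. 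In either case $\lambda_{j+1}^{*},b_{j+1},m_{j+1}>0$ belongs to the lemma's conclusion. The $\mu$--$\nu$ estimate propagates: at an ordinary step $-\mu_j/\nu_j>-\mu_{j+1}/\nu_{j+1}-m_{j+1}>C(\Lambda/\lambda_1)^{j+1}\Lambda M/(\Lambda-\lambda_1)-M\ge C(\Lambda/\lambda_1)^{j}\Lambda M/(\Lambda-\lambda_1)$, the last step because the difference of the two $C(\cdot)$ terms equals $CM(\Lambda/\lambda_1)^{j+1}\ge M$; at a special step $-\mu_j/\nu_j>-(\lambda_1/\Lambda)(\mu_{j+1}/\nu_{j+1})>(\lambda_1/\Lambda)\,C(\Lambda/\lambda_1)^{j+1}\Lambda M/(\Lambda-\lambda_1)=C(\Lambda/\lambda_1)^{j}\Lambda M/(\Lambda-\lambda_1)$.

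The step that requires real care—and the main obstacle—is re-establishing \dref{aplam}--\dref{zcs} for $(F_j,G_j)$ with the $\bar\lambda$ demanded by step $j-1$. Two things enter. First, a quantitative estimate: the invoked lemma moves each non-top root only upward and inside its old bracket, with $\alpha_i'\in(\alpha_i,\beta_i)$ and $C_1(\beta_i-\alpha_i)\le\beta_i'-\alpha_i'\le C_2(i)(\beta_i-\alpha_i)$ (see \dref{sxb}); the $i$-th root is touched at most $n$ times and starts (in $F_n$) from a bracket of width $\rho_i$, so along the whole induction that bracket stays within $(C_1^n\rho_i,(1+C_2(i))^n\rho_i)$ and the total upward drift of $\alpha_i$ is dominated by $\lambda_{i+1}-\lambda_i$—this is precisely where the third line of \dref{ib} and the doubly-exponential decay of $\varepsilon$ built into \dref{vDelta}--\dref{C1C2} are consumed; the top root of $F_j$ need only satisfy the weak bound \dref{aplam}, which holds since an ordinary step leaves it in some $(\alpha_{p-1},\lambda_p)$ and a special step pins it to $\lambda_{j+1}^{*}<\Lambda$. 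Second, a combinatorial point: at every special step the value $\lambda_{j+1}^{*}=\lambda_r$ supplied by \dref{AS} must have $r\ge\deg F_{j+1}+1$, so that $\lambda_{j+1}^{*}$ exceeds the current top root—making Lemma \ref{cas2} applicable and keeping, after its application, the pinned value $\lambda_r$ below the $\bar\lambda$ required next. This follows from $\deg F_{j+1}=1+l$ on block $l$ together with $S_{l+1}\subseteq\{\lambda_i:i\ge l+2\}$—the latter being exactly where $t_i\le i$ is used, since $\lambda_i\in S_{l+1}$ forces $t_i\ge l+2$ and hence $i\ge l+2$—and, within a block, from the fact that \dref{AS} assigns the entries of $S_{l+1}$ in decreasing order of size as the step index increases, hence in increasing order as the induction descends, so each successive $\lambda^{*}$ strictly exceeds the one just pinned. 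Granting these, the induction hypothesis is restored at level $j$; running it down to $j=1$ yields all the asserted polynomials, numbers, and identities \dref{dt1}--\dref{dt2}.
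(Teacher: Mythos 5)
Your proposal is correct and follows essentially the same route as the paper's proof: a downward induction that applies Lemma \ref{cas1} at the $m-1$ ordinary indices and Lemma \ref{cas2} at the $n-m$ block indices, re-establishes \dref{aplam}--\dref{zcs} at each level via the bracket estimates \dref{sxb} and the constants in \dref{ib}, propagates the $-\mu_j/\nu_j$ lower bound exactly as in the paper, and uses the same degree bookkeeping ($\deg F_{j+1}=l+1$ on block $l$) together with $t_i\le i$ to check that each pinned $\lambda_{j+1}^*$ exceeds the current top root. The only quibble is attributional: inside this lemma's induction it is the first two lines of \dref{ib} that bound the cumulative drift by $\Delta<\lambda_{i+1}-\lambda_i$ (the third line is consumed within the proof of Lemma \ref{cas1} itself), but this does not affect the argument.
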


\begin{proof}
For $j=n-1, \ldots,1$, we construct a series of numbers $\lambda_{j+1}^{*}, b_{j+1}, m_{j+1}$, $\mu_j ,\nu_j$ and  polynomials  $F_{j}(\lambda)$, $G_{j}(\lambda)$ on the basis of $\mu_{j+1} ,\nu_{j+1}$, and $F_{j+1}(\lambda)$, $G_{j+1}(\lambda)$,  according to the following strategies:
\begin{enumerate}
\item[a)] if $j\in [1,n-1]\setminus \bigcup_{l=0}^{T-2}[l+1+\sum_{h=1}^{l}q_{h},l+\sum_{h=1}^{l+1}q_{h}]$, we apply Lemma \ref{cas1} with $F(\lambda)=F_{j+1}(\lambda)$, $G(\lambda)=G_{j+1}(\lambda)$, $m^{*}=m_{j+1}$, $\mu=\mu_{j+1}$, $\nu=\nu_{j+1}$ to obtain $\lambda_{j+1}^{*}, b_{j+1}$, $\mu_j ,\nu_j$ and $F_{j}(\lambda)$, $G_{j}(\lambda)$, where
    \begin{eqnarray}\label{mj+1}
m_{j+1}\triangleq\left\{
\begin{array}{ll}
M_{l+2},& j=l+1+\sum_{h=1}^{l+1}q_{h}\,\,\mbox{for}\,\, l<T-2\\
M_{j+1-n+m
},& j>T-2+n-m
\end{array};
\right.
\end{eqnarray}

\item[b)] if $j\in [l+1+\sum_{h=1}^{l}q_{h},l+\sum_{h=1}^{l+1}q_{h}]$ for some $l\in[0,T-2]$, we set $\lambda_{j+1}^{*}=s_{l+1}(l+1+\sum_{h=1}^{l+1}q_{h}-j)$  and apply Lemma \ref{cas2} with $F(\lambda)=F_{j+1}(\lambda)$, $G(\lambda)=G_{j+1}(\lambda)$, $\lambda^{*}=\lambda_{j+1}^{*}$, $\mu=\mu_{j+1}$, $\nu=\nu_{j+1}$ to obtain $m_{j+1}, b_{j+1}$, $\mu_j ,\nu_j$, and $F_{j}(\lambda)$, $G_{j}(\lambda)$.
\end{enumerate}

We shall use the  induction method to show that  either  strategy a) or strategy b) can be implemented for each $j=n-1,\ldots,1$.
First, let  $j=n-1$. Observe that $T\leq m$, it is easy to compute
$$
(T-2)+\sum_{h=1}^{l+1}q_{h}\leq (m-2)+(n-m)=n-2.
$$
Hence, $n-1\notin \bigcup_{l=0}^{T-2}[l+1+\sum_{h=1}^{l}q_{h},l+\sum_{h=1}^{l+1}q_{h}]$.
In addition, since \dref{munvn} and  \dref{mj+1} yield $-\frac{\mu_n}{\nu_n}>CM$ and  $m_n\in (0,M)$, by applying   Lemma \ref{cas1}(i)  with $F(\lambda)=F_n(\lambda)$, $G(\lambda)=G_n(\lambda)$, $m^{*}=m_n$, $\mu=\mu_n$, $\nu=\nu_n$, we can find some numbers  $\lambda_{n}^{*}, b_{n}, \mu_{n-1},\nu_{n-1}$ with  $\lambda_{n}^{*},b_{n}>0$ and
$$-\frac{\mu_{n-1}}{\nu_{n-1}}>-\frac{\mu_{n}}{\nu_{n}}-m_n>C\left(\frac{\Lambda}{\lambda_1}\right)^{n-1}\frac{\Lambda M}{\Lambda-\lambda_1},$$
and two monic polynomials $F_{n-1}(\lambda)$, $G_{n-1}(\lambda)$ such that \dref{dt1} holds. So, strategy a) applies and   both (i) and (ii) are true for these
$\lambda_{n}^{*}, b_{n}, \mu_{n-1},\nu_{n-1}, F_{n-1}(\lambda), G_{n-1}(\lambda)$.

Now, assume that we have constructed the required $\lbrace (\lambda_{j}^{*}, b_{j}, m_j)\rbrace_{j=n-r+1}^{n}$, $\lbrace (\mu_j,\nu_j)\rbrace_{j=n-r}^{n-1}$,   $\lbrace F_j(\lambda)\rbrace_{j=n-r}^{n-1}$ and $\lbrace G_{j}(\lambda)\rbrace_{j=n-r}^{n-1}$ for some $r\in [1,n-2]$ by following either strategy a) or strategy b), so that
 properties  (i) and (ii) hold for $j=n-1, \ldots, n-r$. 
 Considering Lemmas \ref{cas1} and \ref{cas2},
we write $F_{n-j}(\lambda)=\prod_{i=1}^{z_{n-j}}(\lambda-\alpha_{n-j}(i))$ with $\alpha_{n-j}(1)<\cdots<\alpha_{n-j}(z_{n-j})$ and $G_{n-j}(\lambda)=\prod_{i=1}^{z_{n-j}-1}(\lambda-\beta_{n-j}(i))$ with $\beta_{n-j}(1)<\cdots<\beta_{n-j}(z_{n-j}-1)$, $j=0,1,\ldots,r$. 
Here, for each $ j= 0,\ldots,r-1$,
\begin{eqnarray}\label{deg}
z_{n-j-1}=\left\{
\begin{array}{ll}
z_{n-j}-1,  &n-j-1\not\in \cup_{l=0}^{T-2}[l+1+\sum_{h=1}^{l}q_{h},l+\sum_{h=1}^{l+1}q_{h}]\\
z_{n-j},&  n-j-1\in \cup_{l=0}^{T-2}[l+1+\sum_{h=1}^{l}q_{h},l+\sum_{h=1}^{l+1}q_{h}]\\
\end{array}.
\right.
\end{eqnarray}
Furthermore, if $z_{n-r}>1$, then for each   $j\in[1,r]$ and     $i\in[1,z_{n-r}-1]$,
\begin{eqnarray}\label{2abC1C2}
\left\{
\begin{array}{ll}
\alpha_{n-j}(i)\in(\alpha_{n-j+1}(i),\beta_{n-j+1}(i))\\
C_1(\beta_{n-j+1}(i)-\alpha_{n-j+1}(i))\leq \beta_{n-j}(i)-\alpha_{n-j}(i)\leq C_2(i)(\beta_{n-j+1}(i)-\alpha_{n-j+1}(i))
\end{array}.
\right.
\end{eqnarray}
Recall that $\a_n(i)=\lambda_i$ and $\beta_{n}(i)=\lambda_i+\rho_i$, \dref{2abC1C2}  implies that for  $i\in[1,z_{n-r}-1]$,
\begin{eqnarray}\label{wxf}
\beta_{n-r}(i)-\alpha_{n-r}(i)\geq C_1^{r}(\beta_{n}(i)-\alpha_{n}(i))=C_1^{r}\rho_{i}.
\end{eqnarray}
Moreover, by \dref{2abC1C2} again,
\begin{eqnarray}
\beta_{n-j}(i)-\alpha_n(i)&=&\beta_{n-j}(i)-\alpha_{n-j}(i)+\alpha_{n-j}(i)-\alpha_n(i)\nonumber\\
&\leq& C_2(i)(\beta_{n-j+1}(i)-\alpha_{n-j+1}(i))+(\beta_{n-j+1}(i)-\alpha_n(i))\nonumber\\
&\leq &(1+C_2(i))(\beta_{n-j+1}(i)-\alpha_n(i)),\nonumber
\end{eqnarray}
then a straightforward calculation leads to
\begin{eqnarray}\label{abn-r<}
\alpha_{n}(i)<\alpha_{n-r}(i)<\beta_{n-r}(i)<\alpha_{n}(i)+(1+C_2(i))^{r}(\beta_{n}(i)-\alpha_{n}(i)).
\end{eqnarray}
Note that by \dref{ib} in Remark \ref{constants},
$$(1+C_2(i))^{r}(\beta_{n}(i)-\alpha_{n}(i))<(1+C_2(i))^{n}\rho_i<\Delta,$$
so  by virtue of  \dref{wxf} and \dref{abn-r<},
\begin{eqnarray}\label{jb}
\lambda_{i}+C_1^{n}(i)\rho_{i}<\alpha_{n-r}(i)+C_1^{n}(i)\rho_{i}<\beta_{n-r}(i)<\lambda_{i}+(1+C_2(i))^{n}\rho_i<
\lambda_{i+1}.
\end{eqnarray}
We remark that no matter strategy a) or  b) applies, it always infers  
\begin{eqnarray*}
\left\{
\begin{array}{l}
\alpha_{n-r}(z_{n-r})<\beta_{n-r}(z_{n-r})<\lambda_{z_{n-r}+1},\quad z_{n-r+1}-1=z_{n-r}\\
\alpha_{n-r}(z_{n-r})=\lambda_{n-r+1}^{*}<\Lambda,\quad ~~~~\quad~~~~~ z_{n-r+1}=z_{n-r}\\
\end{array},
\right.
\end{eqnarray*}
and hence
\begin{eqnarray}\label{jb2}
\alpha_{n-r}(z_{n-r})\in [\lambda_{z_{n-r}},\Lambda).
\end{eqnarray}

We now  verify that at least one of the strategies a) and b) is valid for $j=n-r-1$. It is discussed by two cases.\\\\
\textbf{Case 1:} $n-r-1\not\in \cup_{l=0}^{T-2}[l+1+\sum_{h=1}^{l}q_{h},l+\sum_{h=1}^{l+1}q_{h}]$.
Because of \dref{deg}, we  estimate $z_{n-r}$ directly by
\begin{eqnarray}
z_{n-r}\geq m-\left(n-2-\sum_{h=1}^{T-1}q_h\right)=m-\left(n-2-(n-m)\right)=2.\nonumber
\end{eqnarray}
Now, $-\frac{\mu_{n-r}}{\nu_{n-r}}>C(\frac{\Lambda}{\lambda_1})^{n-r}\frac{\Lambda M}{\Lambda-\lambda_1}$, it thus gives $-\frac{\mu_{n-r}}{\nu_{n-r}}>CM$. So combining \dref{jb} and \dref{jb2}, it shows that the assumptions of Lemma \ref{cas1} are fulfilled. Therefore,  Lemma \ref{cas1} is applicable and   strategy a) works. 
We  thus conclude properties (i) and (ii) for $j=n-r-1$ by Lemma \ref{cas1} and
 \begin{eqnarray*}
-\frac{\mu_{n-r-1}}{\nu_{n-r-1}}>-\frac{\mu_{{n-r}}}{\nu_{n-r}}-m_{n-r}>C\left(\frac{\Lambda}{\lambda_1}\right)^{n-r}\frac{\Lambda M}{\Lambda-\lambda_1}-M>C\left(\frac{\Lambda}{\lambda_1}\right)^{n-r-1}\frac{\Lambda M}{\Lambda-\lambda_1}.
\end{eqnarray*}
\textbf{Case 2:} $n-r-1\in [l+1+\sum_{h=1}^{l}q_{h},l+\sum_{h=1}^{l+1}q_{h}]$ for some $l\in[0,T-2]$. If $n-r-1<l+\sum_{h=1}^{l+1}q_{h}$, then $n-r\in [l+1+\sum_{h=1}^{l}q_{h},l+\sum_{h=1}^{l+1}q_{h}]$, and hence $F_{n-r}(\lambda)$ is constructed by strategy b). In view of Lemma \ref{cas2},  the maximal root of $F_{n-r}(\lambda)$ is equivalent to $\lambda_{n-r+1}^{*}$, which shows
\begin{eqnarray}
s_{l+1}(l+2+\sum\nolimits_{h=1}^{l+1}q_{h}-n+r)>s_{l+1}(l+1+\sum\nolimits_{h=1}^{l+1}q_{h}-n+r)=\lambda_{n-r+1}^{*}=\alpha_{n-r}(z_{n-r}).\nonumber
\end{eqnarray}

If $n-r-1=l+\sum_{h=1}^{l+1}q_{h}$, then $F_{n-r}(\lambda)$ is constructed by strategy a). Observe that
\begin{eqnarray}
z_{n-r}=T-1-(T-l-2)=l+1,\nonumber
\end{eqnarray}
so by \dref{jb}, $\alpha_{n-r}(z_{n-r})<\lambda_{z_{n-r}+1}=\lambda_{l+2}$. On the other hand, if we
suppose $s_{l+1}(l+2+\sum_{h=1}^{l+1}q_{h}-n+r)=\lambda_{i}$ for some $i\in[1,m]$, then   $i\geq t_i\geq l+2$ because of \dref{ST-1}. As a consequence,
\begin{eqnarray}\label{lj2}
s_{l+1}(l+2+\sum\nolimits_{h=1}^{l+1}q_{h}-n+r)\geq\lambda_{l+2},
\end{eqnarray}
and hence
\begin{eqnarray}\label{gdy}
s_{l+1}(l+2+\sum\nolimits_{h=1}^{l+1}q_{h}-n+r)>\alpha_{n-r}(z_{n-r}).
\end{eqnarray}
We thus conclude  \dref{gdy} is always true in strategy a). By \dref{jb2},
$$
\alpha_{n-r}(z_{n-r}) \in [ \lambda_{z_{n-r}}, s_{l+1}(l+2+\sum\nolimits_{h=1}^{l+1}q_{h}-n+r)).
$$
 This combining  with  \dref{jb} and \dref{jb2} verifies the  assumptions of Lemma \ref{cas2}. Hence,  Lemma \ref{cas2} applies and   strategy b) can be implemented.
So,  properties (i) and (ii) hold for $j=n-r-1$ due to Lemma \ref{cas2} and
 $$-\frac{\mu_{n-r-1}}{\nu_{n-r-1}}>-\frac{\mu_{n-r}}{\nu_{n-r}}\frac{\lambda_1}{\Lambda}>C\left(\frac{\Lambda}{\lambda_1}\right)^{n-r-1}\frac{\Lambda M}{\Lambda-\lambda_1}.$$
 The induction is completed.
\end{proof}

\begin{proof}[Proof of Proposition \ref{gd}]
Let $\{\mu_n,\nu_n,F_n(\lambda), G_n(\lambda)\}$ be defined in Lemma \ref{lemma21}, then
we can construct a series of numbers $\lbrace (\lambda_{j}^{*}, b_{j}, m_j)\rbrace_{j=2}^{n}$, $\lbrace (\mu_j,\nu_j)\rbrace_{j=1}^{n-1}$ and some monic polynomials
$\lbrace F_j(\lambda)\rbrace_{j=1}^{n-1}$, $\lbrace G_{j}(\lambda)\rbrace_{j=1}^{n-1}$. 
Note that $-\frac{\mu_1}{\nu_1}>M>M_1$, set
\begin{eqnarray}\label{bmk1}
\left\{
\begin{array}{l}
m_1=M_1,\quad b_1=-\frac{\mu_1}{\nu_1}-M_1,\quad k_1=-\alpha_1(1)\frac{\mu_1}{\nu_1}\\
k_j=\lambda_{j}^{*}b_j,\quad j=2,\ldots,n\\
\end{array}.
\right.
\end{eqnarray}
We shall see that $\lbrace(k_j,b_j,m_j)\rbrace_{j=1}^{n}$ meet the requirement of Proposition \ref{gd}.

In fact, define a sequence of polynomials $\lbrace D_{j}(\lambda)\rbrace_{j=1}^{n}$ as follows:
\begin{eqnarray}\label{gcd}
\left\{
\begin{array}{ll}
D_1(\lambda)=1\\
D_{k+1}(\lambda)=D_{k}(\lambda),& k\in [1,n-1]\setminus \bigcup_{l=0}^{T-2}[l+1+\sum_{h=1}^{l}q_{j},l+\sum_{h=1}^{l+1}q_{j}]\\
D_{k+1}(\lambda)=(\lambda-\lambda_{k+1}^{*})D_{k}(\lambda),& k\in \bigcup_{l=0}^{T-2}[l+1+\sum_{h=1}^{l}q_{j},l+\sum_{h=1}^{l+1}q_{j}]
\end{array},
\right.\nonumber
\end{eqnarray}
and let
\begin{eqnarray}\label{fgFG}
f_{j}(\lambda)=\frac{\mu_j}{\nu_1}D_{j}(\lambda)F_{j}(\lambda)\quad\mbox{and}\quad  g_{j}(\lambda)=\frac{\nu_j}{\nu_1}D_{j}(\lambda)G_{j}(\lambda),\quad j=1,\ldots,n.
\end{eqnarray}
Clearly,
$
f_n(\lambda)=\frac{\mu_n}{\nu_1}D_{n}(\lambda)F_{n}(\lambda)=\frac{\mu_n}{\nu_1}\prod_{j=1}^{m}(\lambda-\lambda_j)^{t_j}.
$
The rest of the proof is to check whether $\lbrace f_{j}(\lambda)\rbrace_{j=1}^n$ satisfy the recursive formula in Lemma  \ref{ditui}.

Firstly, \dref{bmk1} and \dref{fgFG} imply $f_1(\lambda)=k_1-\lambda(m_1+b_1)$. Moreover, in view of Lemma \ref{lemma21},  $G_{1}(\lambda)=1$, which
 indicates  $g_{1}(\lambda)=1$. Next, by Lemma \ref{lemma21} again,
 for $j\in [1,n-1]\setminus \bigcup_{l=0}^{T-2}[l+1+\sum_{h=1}^{l}q_{h},l+\sum_{h=1}^{l+1}q_{h}]$,
\begin{eqnarray}
\left\{
\begin{array}{l}
f_{j+1}(\lambda)=- m_{j+1}\lambda g_{j+1}(\lambda)+(k_{j+1}-\lambda b_{j+1})f_j(\lambda)\\
g_{j+1}(\lambda)=f_j(\lambda)+(k_{j+1}-\lambda b_{j+1})g_{j}(\lambda)
\end{array},
\right. \nonumber
\end{eqnarray}
and for $j\in \bigcup_{l=0}^{T-2}[l+1+\sum_{h=1}^{l}q_{h},l+\sum_{h=1}^{l+1}q_{h}]$, we compute
\begin{eqnarray}
f_{j+1}(\lambda)&=&\frac{1}{\nu_1}\mu_{j+1} D_{j+1}(\lambda)F_{j+1}(\lambda)=
\frac{1}{\nu_1} D_{j+1}(\lambda)(- m_{j+1}\nu_{j+1}\lambda G_{j+1}(\lambda)-b_{j+1}\mu_j F_j(\lambda))\nonumber\\
&=&- m_{j+1} \lambda g_{j+1}(\lambda)-\frac{(\lambda-\lambda_{j+1}^{*})D_{j}(\lambda)}{\nu_1}b_{j+1}\mu_j F_j(\lambda)\nonumber\\
&=&- m_{j+1} \lambda g_{j+1}(\lambda)+(k_{j+1}-\lambda b_{j+1})f_j(\lambda),\nonumber\\
g_{j+1}(\lambda)&=&\frac{\nu_{j+1}}{\nu_1}D_{j+1}(\lambda)G_{j+1}(\lambda)=\frac{D_{j}(\lambda)}{\nu_1}\nu_{j+1} (\lambda-\lambda_{j+1}^{*}) G_{j+1}(\lambda)\nonumber\\
&=&\frac{D_{j}(\lambda)}{\nu_1}(\mu_j F_j(\lambda)+b_{j+1}\nu_{j} (\lambda_{j+1}^{*}-\lambda)
G_{j}(\lambda))=f_j(\lambda)+(k_{j+1}-\lambda b_{j+1})g_{j}(\lambda).\nonumber
\end{eqnarray}
The assertion thus follows.
\end{proof}
\section{Concluding remarks.}\label{conre}
The  emergence of inerters in engineering  brings some new phenomena in the study of inverse  problems. Particularly, it enables a mass-chain system to possess multiple eigenvalues.
This paper has solved the IEP for the ``fixed-free'' case, where the real numbers for eigenvalue  assignment can be taken arbitrarily  positive. 
Another common situation is that the  both ends of the system are fixed at the wall. For such ``fixed-fixed'' systems, the construction cannot follow readily from the method developed in Section \ref{MR1}. To address this issue,  a more detailed analysis is required and it would be our next work.


\appendix
\section{Proof of Example \ref{thm3}}\label{appa}
For each $j=1,\ldots,5$, let $F_j(\lambda)$ and $ G_j(\lambda)$ be two monic polynomials such that
\begin{eqnarray*}
\mu_j F_j(\lambda)=\frac{f_j(\lambda)}{\left(f_j(\lambda), g_j(\lambda)\right)}\quad\mbox{and}\quad \nu_j G_j(\lambda)=\frac{g_j(\lambda)}{\left(f_j(\lambda),g_j(\lambda)\right)},
\end{eqnarray*}
where $\mu_j$ and $\nu_j$ are the leading coefficients of $f_j(\lambda)$ and $g_j(\lambda)$, respectively. Apparently, $\mu_j\nu_j<0$ and
\begin{eqnarray}\label{FjGjdef}
 (F_{j}(\lambda),G_{j}(\lambda))=1,\quad j=1,\ldots,5.
\end{eqnarray}
Moreover, denote $\alpha_{j}(1)<\cdots<\alpha_{j}(s_j)$ and $\beta_{j}(1)<\cdots<\beta_{j}(s_j-1)$ as the roots of $F_{j}(\lambda)$ and $G_j(\lambda)$.
Define
$$
J\triangleq\lbrace j:  k_j=\lambda_3 b_j, j=2,3,4,5\rbrace
\quad \mbox{
and}\quad   H\triangleq\lbrace j: F_{j-1}(\lambda_3)=0, j=2,3,4,5\rbrace.
$$
 We first assert that $J\cap H$ cannot contain any adjacent natural numbers. Otherwise, suppose there is a number $i\in\lbrace 1,2,3\rbrace$ such that
$i+1, i+2\in J\cap H$. So, 
$k_{i+1}/b_{i+1}=\lambda_3$ and by Lemma \ref{two},
\begin{eqnarray}\label{Fj+1Gj+1}
\mu_{i+1}F_{i+1}(\lambda)=- m_{i+1}\nu_{i+1} \lambda G_{i+1}(\lambda)-b_{i+1}\mu_i F_i(\lambda).
\end{eqnarray}
Since the definition of $H$ indicates $F_i(\lambda_3)=F_{i+1}(\lambda_3)=0$, then by \dref{Fj+1Gj+1},
$$
m_{i+1}\nu_{i+1} \lambda_3  G_{i+1}(\lambda_3)=-b_{i+1}\mu_i F_i(\lambda_3)-\mu_{i+1}F_{i+1}(\lambda_3)=0.$$
Hence, $(\lambda-\lambda_3)|(F_{i+1}(\lambda),G_{i+1}(\lambda))$, which contradicts to \dref{FjGjdef} that $(F_{i+1}(\lambda),G_{i+1}(\lambda))=1$.

We in fact have derived $|J\cap H|\leq 2$,  which together with Lemma \ref{one} implies $F(\lambda_3)=0$ and $5\not\in H$.  On the other hand,  \dref{dym} and \dref{dd1} in Section \ref{n1} infer  $|J\cap H|\geq 2$, and hence $|J\cap H|=2$.  Observe that $J\cap H\not=\lbrace 2,3\rbrace$ or $\lbrace 3,4\rbrace $, it then follows that $J\cap H=\lbrace 2,4\rbrace$.   So,  $\a_1(1)=\lambda_3$ and by Lemma \ref{two}(i), $s_2=1$ and  $\a_2(1)<\lambda_3$. For the roots of $F_j(\lambda),  j=3,4,5$, they can be discussed similarly by  using Lemma \ref{two} repeatedly. Indeed,  $3\notin J\cap H$ and $4\in  H$ lead to
$s_3=2$ and $F_3(\lambda_3)=0$, respectively. So, 
\begin{eqnarray*}\label{a3<}
\left\{
\begin{array}{ll}
0<\a_3(1)<\a_2(1)<\a_3(2)=\lambda_3,& b_3=0\\
\a_3(1)<\min\{\a_2(1),k_3/b_3\}<\lambda_3,\quad \mbox{and}\quad \a_3(2)=\lambda_3<k_3/b_3,& b_3>0
\end{array}.
\right.
\end{eqnarray*}
In addition,  $4\in J\cap H$ indicates $s_4=2$ and
\begin{eqnarray}\label{a4<}
\a_4(1)<\a_3(1)< \a_4(2)<\a_3(2)=\lambda_3.
\end{eqnarray}
At last, since $5\notin J\cap H$,  $s_5=3$. By $|J\cap H|=2$ and \dref{dd1},
$(f_5(\lambda),g_5(\lambda))=(\lambda-\lambda_3)^2,$
which immediately gives $F_5(\lambda)=\prod_{i=1}^3(\lambda-\lambda_i)$. So, $\a_5(1)=\lambda_1<\a_5(2)=\lambda_2<\a_5(3)=\lambda_3$. If $b_5>0$, then  \dref{a4<} infers that $\lambda_3<k_5/b_5$. Therefore,
\begin{eqnarray*}\label{a5<}
\left\{
\begin{array}{ll}
0<\a_5(1)<\a_4(1)<\a_5(2)<\a_4(2)<\a_5(3)=\lambda_3,& b_5=0\\
\a_5(1)<\a_4(1)<\a_5(2)<\a_4(2)<\a_5(3)=\lambda_3<k_5/b_5,& b_5>0
\end{array}.
\right.
\end{eqnarray*}
 We thus summarize
 \begin{eqnarray}\label{ufb3}
\left\{
\begin{array}{l}
s_5=3,\quad s_{4}=s_3=2,\quad s_2=s_1=1\\
\alpha_{2}(s_2),\alpha_{4}(s_4)<\lambda_3\quad \mbox{and}\quad  \alpha_{1}(s_1)=\alpha_{3}(s_3)=\alpha_{5}(s_5)=\lambda_3\\
\alpha_{j+1}(i)<\alpha_{j}(i)<\alpha_{j+1}(i+1),\quad j\in[1,4], \, i\in[1,s_{j+1}-1]
\end{array}
\right.
\end{eqnarray}
and
 \begin{eqnarray}\label{kb>lam3}
\left\{
\begin{array}{ll}
k_j/b_j=\lambda_3\,\,\mbox{and}\,\,b_j>0,&j=2,4\\
k_j/b_j>\lambda_3 \,\,\mbox{if}\,\,b_j>0,&j=3,5
\end{array}.
\right.
\end{eqnarray}
This means  $J=J\cap H=\lbrace 2,4\rbrace$ and as a consequence,  by Lemma \ref{two},
\begin{eqnarray}\label{ufb}
\left\{
\begin{array}{l}
\mu_{j+1}F_{j+1}(\lambda)=-\lambda m_{j+1}\nu_{j+1}G_{j+1}(\lambda)-b_{j+1}\mu_j F_j(\lambda)\\
\left(\lambda-\frac{k_{j+1}}{b_{j+1}}\right)\nu_{j+1}G_{j+1}(\lambda)=\mu_j F_j(\lambda)+(k_{j+1}-\lambda b_{j+1})\nu_{j} G_{j}(\lambda)
\end{array},\quad j=1,3
\right.
\end{eqnarray}
and
\begin{eqnarray}\label{ufb1}
\left\{
\begin{array}{l}
\mu_{j+1}F_{j+1}(\lambda)=-\lambda m_{j+1}\nu_{j+1}G_{j+1}(\lambda)+(k_{j+1}-\lambda b_{j+1})\mu_j F_j(\lambda)\\
\nu_{j+1}G_{j+1}(\lambda)=\mu_j F_j(\lambda)+(k_{j+1}-\lambda b_{j+1})\nu_{j} G_{j}(\lambda)
\end{array},\quad j=2,4.
\right.
\end{eqnarray}
With the above properties, we can also present the relationship between the roots of $F_j(\lambda)$ and $G_{j+1}(\lambda)$ for $j=2,4$. In  fact,  when $j=2,4$,  $s_{j+1}=s_j+1$ and by   \dref{hs}, \dref{ufb3} and \dref{kb>lam3},
$\mu_j F_j(\lambda)\ll -\nu_{j} (\lambda b_{j+1}-k_{j+1})G_{j}(\lambda)$.  Then,  \dref{hs},
 Lemma \ref{one}, \dref{ufb3} and  \dref{ufb1}  imply
  \begin{eqnarray}\label{ajbj+1l3}
\a_j(1)<\b_{j+1}(1)<\a_j(2)< \cdots<\a_j(s_j)< \b_{j+1}(s_{j+1}-1)< \a_{j+1}(s_{j+1})= \lambda_3,\quad j=2,4.
\end{eqnarray}

Now,  we prove \dref{exp1m} by using reduction to absurdity. Suppose
\begin{eqnarray}\label{mmsigma}
\max_{j\in[2,4]}\frac{m_j}{m_{j+1}}\leq\sigma\triangleq\frac{\lambda_1}{8\lambda_3}\left(1-\left(\frac{\lambda_2}{\lambda_3}\right)^{\frac{1}{3}}\right).
\end{eqnarray}
%
First, it is evident that for each  $j\in[1,4]$, \dref{ufb3}, \dref{ufb} and \dref{ufb1} yield
\begin{eqnarray}\label{ma}
m_{j+1}=-\frac{\mu_{j+1}}{\nu_{j+1}}\frac{F_{j+1}(\alpha_{j}(1))}{\alpha_{j}(1) G_{j+1}(\alpha_{j}(1))}&=&-\frac{\mu_{j+1}}{\nu_{j+1}}\frac{\prod_{i=1}^{s_{j+1}}(\alpha_{j}(1)-\alpha_{j+1}(i))}{\alpha_{j}(1)\prod_{i=1}^{s_{j+1}-1}(\alpha_{j}(1)-\beta_{j+1}(i))}\nonumber\\
&\geq &-\frac{\mu_{j+1}}{\nu_{j+1}}\frac{\alpha_{j}(1)-\alpha_{j+1}(1)}{\alpha_{j}(1)}.
\end{eqnarray}
In particular, when $j=1,3$, \dref{hs} further implies
\begin{eqnarray}\label{135j}
m_{j+1}=-\frac{\mu_{j+1}}{\nu_{j+1}}\frac{F_{j+1}(\alpha_{j}(s_j))}{\alpha_{j}(s_j) G_{j+1}(\alpha_{j}(s_{j}))}=-\frac{\mu_{j+1}}{\nu_{j+1}}
\frac{\prod_{i=1}^{s_{j+1}}(\lambda_3-\alpha_{j+1}(i))}{\lambda_3\prod_{i=1}^{s_{j+1}-1}(\lambda_3-\beta_{j+1}(i))}
<-\frac{\mu_{j+1}}{\nu_{j+1}}.
\end{eqnarray}
Note that by comparing the  leading coefficients of the polynomials  in \dref{ufb}--\dref{ufb1}, we assert that  for each $j\in[1,4]$ with $b_{j+1}>0$,
\begin{eqnarray}\label{fac}
-\frac{\mu_{j}}{\nu_{j}}=-\frac{\mu_{j+1}+m_{j+1}\nu_{j+1}}{\nu_{j+1}-\mu_{j}}.
\end{eqnarray}

Finally, let us complete the proof by considering the following four cases.
\\\\
\textbf{Case 1:} $b_3b_5>0$. First, we  estimate $m_j/m_{j+1}$   for $j=2,4$. Let $\varsigma_{j}=k_{j+1}/ b_{j+1} $, then $\varsigma_{j}>\alpha_{j+1}(s_{j+1})=\lambda_3$ by  \dref{ufb3}--\dref{kb>lam3}. Therefore,  noting that $\nu_{j+1}\mu_{j}>0$, \dref{hs} and \dref{fac} lead to
\begin{eqnarray}\label{fc0}
-\frac{\mu_{j}}{\nu_{j}}
&=&-\frac{\mu_{j+1}+m_{j+1}\nu_{j+1}}{\nu_{j+1}-\mu_{j}}>-\frac{\mu_{j+1}+m_{j+1}\nu_{j+1}}{\nu_{j+1}}=-\frac{\mu_{j+1}}{\nu_{j+1}}\left(1-\frac{F_{j+1}(\varsigma_{j})}{\varsigma_{j} G_{j+1}(\varsigma_{j})}\right)\nonumber\\
&=&-\frac{\mu_{j+1}}{\nu_{j+1}}\frac{\varsigma_{j}\prod_{i=1}^{s_{j+1}-1}(\varsigma_{j}-\beta_{j+1}(i))-\prod_{i=1}^{s_{j+1}}(\varsigma_{j}-\alpha_{j+1}(i))}{\varsigma_{j}\prod_{i=1}^{s_{j+1}-1}(\varsigma_{j}-\beta_{j+1}(i))}\nonumber\\
&>& -\frac{\mu_{j+1}}{\nu_{j+1}}\frac{\alpha_{j+1}(1)
\prod_{i=1}^{s_{j+1}-1}(\varsigma_{j}-\beta_{j+1}(i))}{\varsigma_{j}\prod_{i=1}^{s_{j+1}-1}(\varsigma_{j}-\beta_{j+1}(i))}\nonumber\\
&=&
-\frac{\mu_{j+1}}{\nu_{j+1}}\frac{\alpha_{j+1}(1)}{\varsigma_{j}}\geq
-\frac{\mu_{j+1}}{\nu_{j+1}}\frac{\lambda_1}{\varsigma_{j}},\quad
j=2,4.
\end{eqnarray}
So, if  $\varsigma_j\leq 2\lambda_3$, the above inequality reduces to
\begin{eqnarray} \label{fc1}
-\frac{\mu_{j}}{\nu_{j}}>
 -\frac{\mu_{j+1}}{\nu_{j+1}}\frac{\lambda_1}{2\lambda_3},\quad
j=2,4.
\end{eqnarray}
We next treat
 $\varsigma_2>2\lambda_3$. 
  Since \dref{hs} and \dref{ufb3} imply $\b_3(1)<\a_3(2)$ and
 $\lambda_1=\a_5(1)<\a_3(1)$,    then
 \begin{eqnarray*}
 (\varsigma_{2}-\lambda_1) G_{3}(\varsigma_{2})= (\varsigma_{2}-\lambda_1) (\varsigma_{2}-\b_3(1))> (\varsigma_{2}-\a_3(1)) (\varsigma_{2}-\a_3(2)) =   F_{3}(\varsigma_{2}),
\end{eqnarray*}
 which is equivalent to
$1-\frac{F_{3}(\varsigma_{2})}{\varsigma_{2} G_{3}(\varsigma_{2})}>\frac{\lambda_1}{\varsigma_2}.$
 Further, by \dref{ufb1}  again, $\mu_{2}/\nu_{3}=G_3(\varsigma_2)/F_2(\varsigma_2)$, so \dref{ajbj+1l3} shows
\begin{eqnarray}
-\frac{\mu_{2}}{\nu_{2}}&=&-\frac{\mu_{3}+m_{3}\nu_{3}}{\nu_{3}-\mu_{2}}=-\frac{\mu_3}{\nu_3} \frac{1-\frac{F_{3}(\varsigma_{2})}{\varsigma_{2} G_{3}(\varsigma_{2})}}{1-\frac{G_3(\varsigma_{2})}{F_2(\varsigma_2)}}>-\frac{\mu_3}{\nu_3} \frac{\lambda_1}{\varsigma_2}\frac{F_2(\varsigma_2)}{F_2(\varsigma_2)-G_3(\varsigma_2)}\label{mu2nu2>GF}\\
&= &-\frac{\mu_3}{\nu_3} \frac{\lambda_1}{\beta_3(1)-\alpha_2(1)} \frac{\varsigma_2-\alpha_2(1)}{\varsigma_2}>-\frac{\mu_3}{\nu_3}\frac{\lambda_1}{\lambda_3} \left(1-\frac{\lambda_3}{2\lambda_3}\right)>-\frac{\mu_3}{\nu_3}\frac{\lambda_1}{8\lambda_3}.\label{fc3}
\end{eqnarray}
As for  $\varsigma_4>2\lambda_3$,    similar to \dref{mu2nu2>GF},   we compute by \dref{ajbj+1l3} that
\begin{eqnarray}\label{fc2}
-\frac{\mu_{4}}{\nu_{4}}
&>&-\frac{\mu_5}{\nu_5} \frac{\lambda_1}{\varsigma_4}\frac{F_4(\varsigma_4)}{F_4(\varsigma_4)-G_5(\varsigma_4)}\nonumber\\
&=&-\frac{\mu_5}{\nu_5}\frac{\lambda_1}{\varsigma_4}\frac{(\varsigma_4-\alpha_4(1))(\varsigma_4-\alpha_4(2))}{(\varsigma_4-\alpha_4(1))(\varsigma_4-\alpha_4(2))-(\varsigma_4-\beta_5(1))(\varsigma_4-\beta_5(2))}\nonumber\\
&=&-\frac{\mu_5}{\nu_5} \frac{\lambda_1}{\varsigma_4}\frac{(\varsigma_4-\alpha_4(1))(\varsigma_4-\alpha_4(2))}{(\beta_5(1)+\beta_5(2)-\alpha_4(1)-\alpha_4(2))\varsigma_4+\alpha_4(1)\alpha_4(2)-\beta_5(1)\beta_5(2)}\nonumber\\
&>& -\frac{\mu_5}{\nu_5} \frac{\lambda_1}{\beta_5(1)+\beta_5(2)-\alpha_4(1)-\alpha_4(2)}
\frac{\varsigma_4-\alpha_4(1)}
{\varsigma_4}
\frac{\varsigma_4-\alpha_4(2)}
{\varsigma_4}\nonumber\\
&>& -\frac{\mu_5}{\nu_5} \frac{\lambda_1}{2\lambda_3}\left(1-\frac{\lambda_3}{2\lambda_3}\right)^2=-\frac{\mu_5}{\nu_5} \frac{\lambda_1}{8\lambda_3}.
\end{eqnarray}
As a result, by \dref{fc1},   \dref{fc3} and \dref{fc2},  the following inequality always holds:
\begin{eqnarray}
-\frac{\mu_{j}}{\nu_{j}}>-\frac{\mu_{j+1}}{\nu_{j+1}}\frac{\lambda_1}{8\lambda_3},\quad j=2,4, \nonumber
\end{eqnarray}
and thus \dref{ma} and \dref{135j} yield
\begin{eqnarray}\label{mjk1}
m_{j}>-\frac{\mu_{j+1}}{\nu_{j+1}}\frac{\lambda_1}{8\lambda_3}\frac{\alpha_{j-1}(1)-\alpha_{j}(1)}{\alpha_{j-1}(1)}>m_{j+1}\frac{\lambda_1}{8\lambda_3}\frac{\alpha_{j-1}(1)-\alpha_{j}(1)}{\alpha_{j-1}(1)},\quad j=2,4.
\end{eqnarray}
We proceed to the  calculation  of  $m_3/m_4$. Note that
 $\lambda_1<\b_4(1)<\a_4(2)<\alpha_{3}(2)=\lambda_3$,  
then  analogous  to \dref{fc0}, we can prove $-\frac{\mu_{3}}{\nu_{3}}>-\frac{\mu_{4}}{\nu_{4}}\frac{\lambda_1}{\lambda_3},$
which together with \dref{ma} and \dref{135j} leads to
\begin{eqnarray}\label{3m}
m_{3}\geq -\frac{\mu_{4}}{\nu_{4}}\frac{\lambda_1}{\lambda_3}\frac{\alpha_{2}(1)-\alpha_{3}(1)}{\alpha_{2}(1)}>m_{4}\frac{\lambda_1}{\lambda_3}\frac{\alpha_{2}(1)-\alpha_{3}(1)}{\alpha_{2}(1)}.
\end{eqnarray}

Now, combining \dref{mjk1} and \dref{3m}, \dref{mmsigma}    derives
\begin{eqnarray}
\frac{\lambda_1}{8\lambda_3}\frac{\alpha_{j}(1)-\alpha_{j+1}(1)}{\alpha_{j}(1)}\leq\sigma,\quad j=1,2,3. \nonumber
\end{eqnarray}
This is no other than
\begin{eqnarray}
\frac{\alpha_{j}(1)}{\alpha_{j+1}(1)}\leq \frac{\lambda_1}{\lambda_1-8\lambda_3 \sigma},\quad j=1,2,3.\nonumber
\end{eqnarray}
Consequently,
\begin{eqnarray}
\lambda_3=\alpha_1(1)\leq \left(\frac{\lambda_1}{\lambda_1-8\lambda_3 \sigma}\right)^3 \alpha_4(1)<\left(\frac{\lambda_1}{\lambda_1-8\lambda_3 \sigma}\right)^3 \lambda_2,\nonumber
\end{eqnarray}
which contradicts to
 the definition of $\sigma$ in \dref{mmsigma}.\\\\
\textbf{Case 2:} $b_3=b_5=0$. Then,  \dref{ufb1} infers
\begin{eqnarray}
m_3=-\frac{\mu_3}{\nu_3}\quad\mbox{and}\quad m_5=-\frac{\mu_5}{\nu_5}.\nonumber
\end{eqnarray}
In this case, $\nu_5=\mu_4$ and then \dref{ufb1} becomes
\begin{eqnarray}\label{ufb1b50}
\left\{
\begin{array}{l}
\mu_{5}(F_{5}(\lambda)-\lambda G_{5}(\lambda))=k_{5}\nu_5 F_4(\lambda)\\
\mu_{4}(G_{5}(\lambda)- F_4(\lambda))=k_{5}\nu_{4} G_{4}(\lambda)
\end{array}.
\right.
\end{eqnarray}
Since $\deg(F_{5}(\lambda)-\lambda G_{5}(\lambda))=2$ and $\deg(G_{5}(\lambda)- F_4(\lambda))=1$, comparing the leading
coefficients of the polynomials  in \dref{ufb1b50}, we calculate
\begin{eqnarray*}
-\frac{\mu_{4}}{\nu_{4}}\left(\sum_{i=1}^{2}\beta_{5}(i)-\sum_{i=1}^{2}\alpha_{4}(i)\right)
=-\frac{\mu_{5}}{\nu_{5}}\left(\sum_{i=1}^{3}\alpha_{5}(i)-\sum_{i=1}^{2}\beta_{5}(i)\right)=k_5.
\end{eqnarray*}
Consequently, by \dref{hs} and \dref{ajbj+1l3},
\begin{eqnarray}\label{45}
-\frac{\mu_{4}}{\nu_{4}}=-\frac{\mu_{5}}{\nu_{5}}\frac{\sum_{i=1}^{3}\alpha_{5}(i)-\sum_{i=1}^{2}\beta_{5}(i)}{\sum_{i=1}^{2}\beta_{5}(i)-\sum_{i=1}^{2}\alpha_{4}(i)}>-\frac{\mu_{5}}{\nu_{5}}\frac{\alpha_{5}(1)}{2\lambda_3}>\frac{\lambda_1}{2\lambda_3}m_5,
\end{eqnarray}
and hence
\begin{eqnarray}
m_3=-\frac{\mu_3}{\nu_3}=-\frac{\mu_{4}+m_{4}\nu_{4}}{\nu_{4}-\mu_{3}}>-\frac{\mu_{4}}{\nu_{4}}-m_4
=\frac{\lambda_1}{2\lambda_3}m_5-m_4.\nonumber
\end{eqnarray}
Therefore, by  \dref{mmsigma},
$$
\sigma^2 m_5\geq m_3>\frac{\lambda_1}{2\lambda_3}m_5-m_4\geq \frac{\lambda_1}{2\lambda_3}m_5-\sigma m_5,
$$
which infers $\sigma^2+\sigma>\frac{\lambda_1}{2\lambda_3}$. This is impossible due to the definition of  $\sigma$ in \dref{mmsigma}.\\\\
\textbf{Case 3:} $b_3=0$ and $b_5>0$. Then, $m_3=-\frac{\mu_3}{\nu_3}$ and a similar argument as \dref{45} indicates
\begin{eqnarray*}
-\frac{\mu_{2}}{\nu_{2}}=-\frac{\mu_{3}}{\nu_{3}}\frac{\sum_{i=1}^{2}\alpha_{3}(i)-\beta_{3}(1)}{\beta_{3}(1)-\alpha_{2}(1)},
\end{eqnarray*}
and then by \dref{hs} and \dref{ajbj+1l3},
\begin{eqnarray*}
m_2&=&-\frac{\mu_{2}}{\nu_{2}}\frac{\lambda_3-\alpha_2(1)}{\lambda_3}=-\frac{\mu_{3}}{\nu_{3}}\frac{\lambda_3-\alpha_2(1)}{\lambda_3}\frac{\sum_{i=1}^{2}\alpha_{3}(i)-\beta_{3}(1)}{\beta_{3}(1)-\alpha_{2}(1)}\\
&>&-\frac{\mu_{3}}{\nu_{3}}\frac{\sum_{i=1}^{2}\alpha_{3}(i)-\beta_{3}(1)}{\lambda_3}>m_3\frac{\lambda_1}{\lambda_3}.\nonumber
\end{eqnarray*}
It contradicts to  \dref{mmsigma} that  $\frac{m_2}{m_3}\leq\sigma<\frac{\lambda_1}{\lambda_3}$.\\\\
\textbf{Case 4:} $b_3>0$ and $b_5=0$. Then, \dref{45} holds and according to \dref{ma},
\begin{eqnarray}
m_{4}>-\frac{\mu_{4}}{\nu_{4}}\frac{\alpha_{3}(1)-\alpha_{4}(1)}{\alpha_{3}(1)}>m_{5}\frac{\lambda_1}{2\lambda_3}\frac{\alpha_{3}(1)-\alpha_{4}(1)}{\alpha_{3}(1)}.\nonumber
\end{eqnarray}
By \dref{fc0}--\dref{fc3} and \dref{3m}
 in \textbf{Case 1}, we can demonstrate
\begin{eqnarray}
m_{j}>m_{j+1}\frac{\lambda_1}{8\lambda_3}\frac{\alpha_{j-1}(1)-\alpha_{j}(1)}{\alpha_{j-1}(1)},\quad j=2,3.\nonumber
\end{eqnarray}
The rest of the proof thus keeps the same as that in \textbf{Case 1}.
\qed
\section{Proof of Remark \ref{constants}}\label{appc}
We only prove \dref{ib}, as the other results are trivial.
First, note that $C_{2}(m-1)>1$ and 
 $\Lambda/\Delta\geq 2$, so
\begin{eqnarray}
(1+C_2(m-1))^n \rho_{m-1}&<& 2^n C_2^n(m-1)\rho_{m-1}=2^n\left(\frac{2^{2n(n+1)^2}\Lambda^{n(n+1)}}{\Delta^{n^2}\varepsilon^{n}}\right)\varepsilon^{n+1}\nonumber\\
&<&\frac{2^{2(n+1)^3}\Lambda^{(n+1)^2}}{2\Delta^{n^2+n}}\varepsilon<\frac{\Delta}{2n}<\frac{\Delta}{2}.\nonumber
\end{eqnarray}
Let $m>2$. Since $2\varepsilon<1$,  for each $j\in[1,m-2]$,
\begin{eqnarray}
(1+C_2(j))^n \rho_j&<&2^n C_2^n(j)\rho_{j}=2^n\left(\frac{2^{2n(n+1)^2}\Lambda^{n(n+1)}}{\Delta^{n^2}\varepsilon^{n(n+1)^{m-j-1}}}\right)\varepsilon^{(n+1)^{m-j}}\nonumber\\
&=&2^n\left(\frac{2^{2n(n+1)^2}\Lambda^{n(n+1)}}{\Delta^{n^2}}\right)\left(\varepsilon^{(n+1)^{m-j-2}}\right)^{n+1}\nonumber\\
&<&\frac{2^{2n(n+1)^2}\Lambda^{n(n+1)}}{\Delta^{n^2}}\varepsilon^{(n+1)^{m-j-2}}=C_2^n(j+1)\rho_{j+1}<(1+C_2(j+1))^n \rho_{j+1}.\nonumber
\end{eqnarray}
When $m>3$, for each $j\in[2,m-2]$,
\begin{eqnarray}
\frac{n(1+C_{2}(j-1))^n \rho_{j-1}}{\Delta}&<&  \frac{n2^n C_{2}^n(j-1)\rho_{j-1}}{\Delta}=n2^n \left(\frac{2^{2n(n+1)^2}\Lambda^{n(n+1)}}{\Delta^{n^2+1}}\right)\varepsilon^{(n+1)^{m-j}}\nonumber\\
&=& n2^n \left(\frac{2^{2n(n+1)^2}\Lambda^{n(n+1)}}{\Delta^{n^2+1}}\right)\varepsilon^{(n+1)^{m-j}-(n+1)^{m-j-1}}\rho_{j+1}\nonumber\\
&<& n2^n \left(\frac{2^{2n(n+1)^2}\Lambda^{n(n+1)}\varepsilon}{\Delta^{n^2+1}}\right)\rho_{j+1}\nonumber\\
&<&\frac{\Delta^{n}}{2^{(n+1)^3}\Lambda^{n+1}}\rho_{j+1}\leq  \frac{1}{4}\left(\frac{\Delta^n}{2^{(n+1)^2}\Lambda^{n+1}}\right)\rho_{j+1}.\nonumber
\end{eqnarray}
Hence, \dref{ib} follows immediately.
\section{Proofs of Lemmas \ref{cas1}--\ref{cas2}}\label{appb}
\begin{proof}[Proof of Lemma \ref{cas1}]
Let $p\geq 1$.  We first take a number $\lambda^{*}$ satisfying
\begin{eqnarray}\label{lam*}
\lambda^{*}>\a_p\quad \mbox{and}\quad -\frac{\mu}{\nu}\frac{F(\lambda^{*})}{\lambda^{*}G(\lambda^{*})}=m^{*}.
\end{eqnarray}
Such $\lambda^{*}$ indeed exists because of  \dref{FG}--\dref{zcs}, which yield   $-\frac{\mu}{\nu}\frac{F(\a_p)}{\a_pG(\a_p)}=0 $ and
\begin{eqnarray}
\lim_{\lambda\rightarrow+\infty}-\frac{\mu}{\nu}\frac{F(\lambda)}{\lambda G(\lambda)}=-\frac{\mu}{\nu}>CM>m^{*}.\nonumber
\end{eqnarray}
Then, let
\begin{eqnarray}\label{d1}
\left\{
\begin{array}{l}
F_{0}(\lambda)=\frac{\mu F(\lambda)+ m^{*}\nu \lambda G(\lambda)}{(\mu+m^{*}\nu)(\lambda-\lambda^{*})}\\
b^{*}=-\frac{\mu+m^{*}\nu}{\nu} \frac{F_{0}(\lambda^{*})}{G(\lambda^{*})}\\
\mu_{0}=\nu\frac{G(\lambda^{*})}{F_{0}(\lambda^{*})}\\
G_{0}(\lambda)=\frac{\nu G(\lambda)-\mu_{0}F_{0}(\lambda)}{(\nu-\mu_{0})(\lambda-\lambda^{*})}\\
\nu_{0}=\frac{\mu_{0}-\nu}{b^{*}}
\end{array},
\right.
\end{eqnarray}
we shall show that all the above defined numbers and polynomials fulfill our requirements.

Observe that $\mu\nu<0$ and by \dref{aplam}--\dref{zcs},
$$\alpha_{p}\geq\lambda_p>\lambda_{p-1}+(1+C_2(p-1))^{n}\rho_{p-1}>\beta_{p-1}>\a_{p-1}>\cdots>\b_1>\a_1>0,$$
 then $\lambda G(\lambda)\ll F(\lambda)$. As a result, by \dref{lam*},
\begin{eqnarray}\label{anb2}
-\frac{\mu+m^{*}\nu}{\nu}=-\frac{\mu}{\nu}\left(1-\frac{F(\lambda^{*})}{\lambda^{*}G(\lambda^{*})}\right)>  -\frac{\mu}{\nu}\left(1-\frac{\prod_{i=1}^{p}(\lambda^{*}-\alpha_{i})}{(\lambda^{*}-\alpha_{1})\prod_{i=1}^{p-1}(\lambda^{*}-\alpha_{i+1})}\right)=0.
\end{eqnarray}
Since \dref{lam*} indicates  $(\lambda-\lambda^{*})|(\mu F(\lambda)+\lambda m^{*}\nu G(\lambda))$, \dref{anb2}
means $F_{0}(\lambda)$ is a well-defined monic polynomial of degree $p-1$. 
 Recall that $\lambda G(\lambda)\ll F(\lambda)$ and $|\mu|>m^*|\nu|$,
 applying Lemma \ref{lemma1} to polynomials  $\mu F(\lambda)$ and $-m^{*}\nu\lambda G(\lambda)$ shows
\begin{eqnarray}\label{alp}
\alpha_{i}'\in (\alpha_{i}, \beta_{i}),\quad i=1,\ldots,p-1.
\end{eqnarray}
 Therefore,
 \begin{eqnarray}\label{F0FG}
\lambda F_{0}(\lambda)\ll F(\lambda)\quad\mbox{and}\quad
F_{0}(\lambda)\ll  G(\lambda),
\end{eqnarray}
which give $
b^{*}=-\frac{\mu+m^{*}\nu}{\nu}\left( \frac{F_{0}(\lambda^{*})}{G(\lambda^{*})}\right)>0,
$ 
\begin{eqnarray}\label{mu0nu}
\mu_{0}\nu=\nu^2\left(\frac{G(\lambda^{*})}{F_{0}(\lambda^{*})}\right)>0\quad\mbox{and}\quad
|\mu_{0}|=|\nu|\left(\frac{G(\lambda^{*})}{F_{0}(\lambda^{*})}\right)<|\nu|.
\end{eqnarray}
Consequently, by \dref{d1} and \dref{anb2},
\begin{eqnarray} -\frac{\mu_{0}}{\nu_{0}}=-\frac{\mu+m^{*}\nu}{\nu-\mu_{0}}=-\frac{\mu+m^{*}\nu}{\nu}\frac{1}{1-\mu_0/\nu}   >-\frac{\mu}{\nu}-m^{*}.
\end{eqnarray}
So far, we have verified that $\lambda^*,b^*, \mu_{0}$ and $\nu_{0}$ satisfy the condition of Lemma \ref{cas1}.
For these numbers,  the first equality of \dref{cas1e} follows directly from \dref{d1}. Considering \dref{alp}, if the second inequality of \dref{sxb} holds when $p>2$, then $F_0(\lambda)$ in \dref{d1} will  be the  exact polynomial desired   for both statements (i) and (ii).

Next, we check $G_0(\lambda)$ in \dref{d1}.
The definition of $\mu_{0}$ infers
$(\lambda-\lambda^{*})\big|(\nu G(\lambda)-\mu_{0}F_{0}(\lambda))$,
hence  \dref{mu0nu} implies that   $G_{0}(\lambda)$ is a well-defined monic polynomial. We discuss this part by considering two cases.\\
(i) $p=2$. In this case, $G_0(\lambda)=1$ fulfills the second equality of \dref{cas1e} by \dref{d1}.\\
(ii) $p>2$. Taking account to \dref{F0FG} and \dref{mu0nu}, we apply
Lemma \ref{lemma1} to polynomials  $\nu G(\lambda)$ and $\mu_{0}F_{0}(\lambda)$, then
\begin{eqnarray}\label{bet}
\beta_{i}'\in (\beta_{i}, \alpha_{i+1}'),\quad  i=1,\ldots,p-2,
\end{eqnarray}
so the roots of $G_0(\lambda)$ are distinct.
Further, we can verify the second equality of \dref{cas1e}  from \dref{d1} again.

Now, it remains to show the second inequality of \dref{sxb} for $i=1,\ldots,p-2$
when $p>2$. Combining \dref{alp} and \dref{bet}, it gives $\lambda G_{0}(\lambda)\ll F_{0}(\lambda)$.  Fix an index $i\in[1,p-2]$, we compute
\begin{eqnarray}\label{bi-aj}
\prod_{j=1}^{p-1}(\beta_{i}'-\alpha_{j}')=F_{0}(\beta_{i}')=\frac{1}{\mu_{0}}(\nu G(\beta_{i}')-b^{*}(\lambda^{*}-\beta_{i}')\nu_{0} G_{0}(\beta_{i}'))=\frac{F_{0}(\lambda^{*})}{G(\lambda^{*})}\prod_{j=1}^{p-1}(\beta_{i}'-\beta_{j}).
\end{eqnarray}
Note that  $\frac{F_{0}(\lambda^{*})}{G(\lambda^{*})}>1$ by \dref{F0FG} and $
\prod_{j>i+1}\frac{\beta_{i}'-\beta_{j}}{\beta_{i}'-\alpha_{j}'}\geq 1
$ by \dref{alp} and \dref{bet}, \dref{bi-aj} immediately
leads to
\begin{eqnarray}\label{b-a/b-b}
\frac{\beta_{i}'-\alpha_{i}'}{\beta_{i}'-\beta_{i}}>\frac{\beta_{i}'-\beta_{i+1}}{\beta_{i}'-\alpha_{i+1}'}\prod_{j<i}\frac{\beta_{i}'-\beta_{j}}{\beta_{i}'-\alpha_{j}'}.
\end{eqnarray}

We are going to  employ Lemma \ref{lemma1} to estimate term $\frac{\beta_{i}'-\beta_{i+1}}{\beta_{i}'-\alpha_{i+1}'}$ in \dref{b-a/b-b}. For this,
denote $\eta=\frac{1}{2}\min_{j\in[1,p-1]}(\beta_{j}-\alpha_{j})$. Recall that $C_1,\rho_1<1$,   then by \dref{zcs}, for each $j\in[1,p-1]$,
$$
C_1^{n}\rho_1\leq\min\{1, C_1^{n}\rho_j\}<\min\{1,\b_j-\a_j\},
$$
which means $C_1^{n}\rho_1/2<\min\{1,\eta\}$. As a result, by
 \dref{MC},
\begin{eqnarray}
\left|\frac{m^{*}\nu}{\mu}\right|&<&\frac{m^{*}}{CM}<\left(\frac{C_1^{n}\rho_1}{2}\right)^{2n}\frac{1}{2+2\Lambda^n}<\frac{\min\lbrace 1, \eta^{2p}\rbrace}{2+2\Lambda^{p}}\nonumber\\
&<&\frac{\min\left\lbrace 1,\left(\frac{1}{2}\min_{1\leqslant l\leqslant p-1}\left(\alpha_{l+1}-\alpha_{l}\right)\right)^{p}\right\rbrace\min\lbrace 1, \eta^{p}\rbrace}{2+2\Lambda^{p}}\nonumber\\
&<&\frac{\min\left\lbrace 1,\left(\min_{1\leqslant l\leqslant p-1}(\alpha_{l+1}-\alpha_{l})-\eta\right)^{p}\rbrace\min\lbrace 1, \eta^{p}\right\rbrace}{2+2\max_{j\in[1,p]}\left|\alpha_{j}\prod_{l=1}^{p-1}(\alpha_{j}-\beta_{l})\right|}.\nonumber
\end{eqnarray}
So, by applying Lemma \ref{lemma1} to  polynomials $\mu F(\lambda)$ and $-m^{*}\nu\lambda G(\lambda)$, we conclude
\begin{eqnarray}\label{er}
\max_{j\in[1,p-1]}(\alpha_{j}'-\alpha_{j})<\eta=\frac{1}{2}\min_{j\in[1,p-1]}(\beta_{j}-\alpha_{j}).
\end{eqnarray}
Then, $\beta_{i+1}-\alpha_{i+1}'\geq \frac{\beta_{i+1}-\alpha_{i+1}}{2}$, which together with \dref{zcs} indicates
\begin{eqnarray}\label{fbe}
\frac{\beta_{i}'-\beta_{i+1}}{\beta_{i}'-\alpha_{i+1}'}&=&1+\frac{\beta_{i+1}-\alpha_{i+1}'}{\alpha_{i+1}'-\beta_{i}'}\geq  1+\frac{\beta_{i+1}-\alpha_{i+1}}{2(\beta_{i+1}-\beta_{i}')}\nonumber\\
&\geq &1+\frac{C_1^{n}\rho_{i+1}}{2(\lambda_{i+1}-\beta_{i}')}\geq  1+\frac{\Delta^n}{2^{(n+1)^2}\Lambda^{n+1}}\rho_{i+1}.
\end{eqnarray}
Next,  we deal with
$
\prod_{j<i}\frac{\beta_{i}'-\beta_{j}}{\beta_{i}'-\alpha_{j}'}
$ in \dref{b-a/b-b} for $i\geq 2$.  As a matter of fact,
   by 
  \dref{zcs}, for any $j<i$,
\begin{eqnarray}\label{ben}
\frac{\beta_{i}'-\beta_{j}}{\beta_{i}'-\alpha_{j}'}&>&\frac{\beta_{i}'-\beta_{j}}{\beta_{i}'-\alpha_{j}}=1-\frac{\beta_{j}-\alpha_{j}}{\beta_{i}'-\alpha_{j}}\nonumber\\
&>& 1-\frac{\beta_{j}-\lambda_j}{\beta_{i}'-\lambda_{j}} >1-\frac{\beta_{j}-\lambda_j}{\lambda_i-\lambda_{j}}\geq 1-\frac{(1+C_2(j))^n \rho_{j}}{\Delta}.
\end{eqnarray}

Now, if $i\geq 2$,   substituting \dref{fbe} and \dref{ben} into \dref{b-a/b-b} yields
\begin{eqnarray}\label{babb}
\frac{\beta_{i}'-\alpha_{i}'}{\beta_{i}'-\beta_{i}}
&>&\left(1+\frac{\Delta^n}{2^{(n+1)^2}\Lambda^{n+1}}\rho_{i+1}\right)\prod_{j<i}\left(1-\frac{(1+C_2(j))^n \rho_{j}}{\Delta}\right)\nonumber\\
&\geq& \left(1+\frac{\Delta^n}{2^{(n+1)^2}\Lambda^{n+1}}\rho_{i+1}\right) \left(1-\frac{(1+C_2(i-1))^n \rho_{i-1}}{\Delta}\right)^{i-1}\nonumber\\
&>&\left(1+\frac{\Delta^n}{2^{(n+1)^2}\Lambda^{n+1}}\rho_{i+1}\right)\left(1-\frac{(1+C_2(i-1))^n \rho_{i-1}}{\Delta}\right)^n,
\end{eqnarray}
where the second inequality follows from \dref{ib} in Remark \ref{constants}. Since the
\textit{Bernoulli inequality} gives
$$
\left(1-\frac{(1+C_2(i-1))^n \rho_{i-1}}{\Delta}\right)^n>1-\frac{n(1+C_2(i-1))^n \rho_{i-1}}{\Delta},
$$
\dref{babb} reduces to

\begin{eqnarray}\label{ii>}
\frac{\beta_{i}'-\alpha_{i}'}{\beta_{i}'-\beta_{i}}&>&\left(1+\frac{\Delta^n}{2^{(n+1)^2}\Lambda^{n+1}}\rho_{i+1}\right) \left(1-\frac{n(1+C_2(i-1))^n \rho_{i-1}}{\Delta}\right)\nonumber\\
&>&1+\frac{\Delta^n}{2^{(n+1)^2+1}\Lambda^{n+1}}\rho_{i+1}.
\end{eqnarray}
As for  $i=1$, it is trivial that
$
\frac{\beta_{i}'-\alpha_{i}'}{\beta_{i}'-\beta_{i}}> 1+\frac{\Delta^n}{2^{(n+1)^2}\Lambda^{n+1}}\rho_{i+1}.
$
So, both the two cases lead to \dref{ii>}.
Hence,
\begin{eqnarray}\label{dy}
\frac{\beta_{i}'-\alpha_{i}'}{\beta_{i}-\alpha_{i}}<\frac{\beta_{i}'-\alpha_{i}'}{\beta_{i}-\alpha_{i}'}=1+\frac{1}{\frac{\beta_{i}'-\alpha_{i}'}{\beta_{i}'-\beta_{i}}-1}\leq 1+\frac{2^{(n+1)^2+1}\Lambda^{n+1}}{\Delta^n \rho_{i+1}}<C_2(i).
\end{eqnarray}
On the other hand, in view of \dref{er},
\begin{eqnarray}\label{>C1}
\frac{\beta_{i}'-\alpha_{i}'}{\beta_{i}-\alpha_{i}}>\frac{\beta_{i}-\alpha_{i}'}{\beta_{i}-\alpha_{i}}\geq \frac{1}{2}\geq C_1.
\end{eqnarray}
Therefore,  \dref{sxb} is a direct result of  \dref{dy} and \dref{>C1}.
\end{proof}

\begin{proof}[Proof of Lemma \ref{cas2}]
(i) Let $p\geq 2$ and set
\begin{eqnarray}\label{d2}
\left\{
\begin{array}{l}
m^{*}=-\frac{\mu}{\nu}\frac{F(\lambda^{*})}{\lambda^{*}G(\lambda^{*})}\\
F_{0}(\lambda)=\frac{\mu F(\lambda)+ m^{*}\nu\lambda G(\lambda)}{\mu+m^{*}\nu}
\end{array}.
\right.
\end{eqnarray}
By \dref{FG} and \dref{zcs}, it is clear that $m^{*}>0$.
Observe that
\begin{eqnarray}
\left|\frac{m^{*}\nu}{\mu}\right|=\frac{\prod_{j=1}^{p}(\lambda^{*}-\alpha_{j})}{\lambda^{*}\prod_{j=1}^{p-1}(\lambda^{*}-\beta_{j})}<\frac{\prod_{j=1}^{p}(\lambda^{*}-\alpha_{j})}{(\lambda^{*}-\alpha_{1})\prod_{j=1}^{p-1}(\lambda^{*}-\alpha_{j+1})}=1,\nonumber
\end{eqnarray}
then $F_{0}(\lambda)$ is a well-defined monic polynomial of
$
\deg(F_{0}(\lambda))=\deg(F(\lambda))=p.
$
Now,  \dref{zcs} indicates  $\lambda G(\lambda)\ll F(\lambda)$,
 by applying Lemma \ref{lemma1} to polynomials $\mu F(\lambda)$ and  $- m^{*}\nu \lambda G(\lambda)$, it follows that the first $p-1$ roots of $F_{0}(\lambda)$ satisfy
\begin{eqnarray}\label{alal}
\alpha_{j}'\in (\alpha_{j}, \beta_{j}),\quad j=1,\ldots, p-1.
\end{eqnarray}
Moreover,  the definition of $m^*$ in \dref{d2} shows
     $$(\lambda-\lambda^{*})|(\mu F(\lambda)+ m^{*}\nu \lambda G(\lambda)),$$
which yields
$
\alpha_{p}'=\lambda^{*}.
$
Hence,  $F(\lambda)\ll F_{0}(\lambda)$.

Next, let
\begin{eqnarray}\label{d3}
\left\{
\begin{array}{l}
\mu_{0}=\tau \nu\\
b^{*}=-\frac{\mu+m^{*}\nu}{\mu_{0}}\\
\nu_{0}=\frac{\mu_{0}-\nu}{b^{*}}\\
G_{0}(\lambda)=\frac{\nu(\lambda-\lambda^{*})G(\lambda)-\mu_{0}F_{0}(\lambda)}{b^{*}\nu_{0}(\lambda^{*}-\lambda)}
\end{array},
\right.
\end{eqnarray}
where
\begin{eqnarray}
\tau=\left\{
\begin{array}{l}
v_1,\quad p>2\\
v_2,\quad p=2\\
\end{array},
\right.\nonumber
\end{eqnarray}
with
\begin{eqnarray}
v_1&=&\frac{1}{2}\frac{\min\lbrace 1,\left(\min_{1\leqslant l\leqslant p-2}(\beta_{l+1}-\beta_{l})-\eta_1\right)^{p-1}\rbrace\min\lbrace 1, \eta_1^{p-1}\rbrace}{2+2\max_{j\in[1,p-1]}\left|\prod_{h=1}^{p-1}(\beta_{j}-\alpha_{h}')\right|},\nonumber\\
v_2&=&\frac{1}{4} \min\left\lbrace \frac{\eta_2}{\beta_{1}-\alpha_{1}'},1\right\rbrace,\nonumber\\
\eta_1 &=&\min\left\lbrace \frac{\min_{1\leqslant l\leqslant p-2}(\beta_{l+1}-\beta_{l})}{4}, \min_{1\leqslant l\leqslant p-1}(\beta_{l}-\alpha_{l})\right\rbrace,\nonumber\\
\eta_2 &=&\min\left\lbrace\beta_{1}-\alpha_{1},\frac{1}{2}\right\rbrace.\nonumber
\end{eqnarray}
Therefore,
\begin{eqnarray}
b^{*}=-\frac{\mu+m^{*}\nu}{\mu_{0}}=-\frac{\mu}{\tau \nu}\left(1-\frac{F(\lambda^{*})}{\lambda^{*}G(\lambda^{*})}\right)>0.\nonumber
\end{eqnarray}
Note that $\tau\in (0,1)$, then \dref{d3} gives $\mu_0/\nu\in (0,1)$. As a result, by $\lambda G(\lambda)\ll F(\lambda)$,
\begin{eqnarray}
-\frac{\mu_{0}}{\nu_{0}}=-\frac{\mu+m^{*}\nu}{\nu-\mu_{0}}>-\frac{\mu}{\nu}\left(1-\frac{F(\lambda^{*})}{\lambda^{*}G(\lambda^{*})}\right)
>-\frac{\mu}{\nu}\frac{\lambda^{*}G(\lambda^{*})-(\lambda^{*}-\alpha_{1})G(\lambda^{*})}{\lambda^{*}G(\lambda^{*})}>-\frac{\lambda_1}{\Lambda}\frac{\mu}{\nu}>0.\nonumber
\end{eqnarray}
So, $G_{0}(\lambda)$ is a  well-defined monic polynomial of degree $p-1$.  Since  \dref{alal} means that $\frac{F_{0}(\lambda)}{(\lambda-\lambda^{*})}\ll G(\lambda)$, by applying Lemma \ref{lemma1} to polynomials $\nu G(\lambda)$ and $\mu_{0}\frac{F_{0}(\lambda)}{(\lambda-\lambda^{*})}$,  we deduce $\beta_{p-1}'>\beta_{p-1}$ and
\begin{eqnarray}\label{b'l3}
\beta_j' \in (\beta_j, \alpha_{j+1}'),\quad j=1,\ldots, p-2.
\end{eqnarray}
Observe that  plugging \dref{d2} and \dref{d3} into \dref{cas2e} immediately shows the validity of \dref{cas2e}.

At last,  we show the second inequality of \dref{sxb} for $i=1,\ldots,p-1$. Fix an index $i\in[1,p-1]$. By plugging $\lambda=\beta_{i}$ into \dref{cas2e}, we obtain
$
\mu F(\beta_{i})=-b^{*}\mu_{0}F_{0}(\beta_{i}),
$
which equals to
\begin{eqnarray}\label{prodba}
\prod_{j=1}^{p}(\beta_{i}-\alpha_{j}')=\frac{1}{1-\frac{F(\lambda^{*})}{\lambda^{*}G(\lambda^{*})}}\prod_{j=1}^{p}(\beta_{i}-\alpha_{j}).
\end{eqnarray}
Note that
$
\frac{\beta_{i}-\alpha_{j}}{\beta_{i}-\alpha_{j}'}>1
$
 for all $j<i$ because of \dref{alal}. As for  $j\in[i+1,p-1]$,
by   \dref{zcs}  and \dref{alal},
\begin{eqnarray}
\frac{\beta_{i}-\alpha_{j}}{\beta_{i}-\alpha_{j}'}=1-\frac{\alpha_{j}'-\alpha_{j}}{\alpha_{j}'-\beta_{i}}\geq 1-\frac{\beta_{j}-\lambda_j}{\alpha_{j}-\beta_{i}}\geq 1-\frac{\beta_{j}-\lambda_j}{\lambda_{i+1}-\beta_{i}},\nonumber
\end{eqnarray}
which together with   \dref{ib} yields
\begin{eqnarray}
\frac{\beta_{i}-\alpha_{j}}{\beta_{i}-\alpha_{j}'}\geq  1-\frac{\beta_{j}-\lambda_j}{(\lambda_{i+1}-\lambda_i)-(\beta_{i}-\lambda_i)}  \geq 1-\frac{(1+C_{2}(j))^{n}\rho_j}{2\Delta-(1+C_{2}(i))^n \rho_i}>\frac{1}{2}.\nonumber
\end{eqnarray}
Furthermore, since $\alpha_{p}\in[\lambda_p, \lambda^{*})\subset [\lambda_p, \Lambda)$, \dref{vDelta}, \dref{ib} and \dref{zcs}  lead to
\begin{eqnarray}
\frac{\beta_{i}-\alpha_{p}}{\beta_{i}-\alpha_{p}'}\geq \frac{\lambda_{i+1}-\beta_{i}}{\Lambda}\geq \frac{\Delta-(1+C_2^n(i))\rho_i}{\Lambda}>\frac{\Delta}{2\Lambda}.\nonumber
\end{eqnarray}
Consequently, it follows from \dref{b'l3} and \dref{prodba} that
\begin{eqnarray}
\frac{\beta_{i}'-\alpha_{i}'}{\beta_{i}-\alpha_{i}}>\frac{\beta_{i}-\alpha_{i}'}{\beta_{i}-\alpha_{i}}=\frac{1}{1-\frac{F(\lambda^{*})}{\lambda^{*}G(\lambda^{*})}}\prod_{j\not= i}\frac{\beta_{i}-\alpha_{j}}{\beta_{i}-\alpha_{j}'}>\frac{1}{2^n}\frac{\Delta}{2\Lambda}=C_1.\nonumber
\end{eqnarray}

Now, we prove $\beta_{i}'-\alpha_{i}'\leq C_2(i)(\beta_{i}-\alpha_{i})$ for each $i\in [1,p-1]$. If $p>2$,
\begin{eqnarray}
\left|\frac{\mu_{0}}{\nu}\right|=\tau=v_1<\frac{\min\lbrace 1,\left(\min_{1\leqslant l\leqslant p-2}(\beta_{l+1}-\beta_{l})-\eta_1\right)^{p-1}\rbrace\min\lbrace 1, \eta_1^{p-1}\rbrace}{2+2\max_{j\in[1,p-1]}\left|\prod_{h=1}^{p-1}(\beta_{j}-\alpha_{h}')\right|}.\nonumber
\end{eqnarray}
By applying Lemma \ref{lemma1} to polynomials $\nu G(\lambda)$ and $\mu_{0}\frac{F_{0}(\lambda)}{(\lambda-\lambda^{*})}$, it infers
\begin{eqnarray}
\max_{i\in[1,p-1]}(\beta_{i}'-\beta_{i})<\eta_1\leq \min_{1\leqslant i\leqslant p-1}(\beta_{i}-\alpha_{i}).\nonumber
\end{eqnarray}
When $p=2$,
\begin{eqnarray}
\left|\frac{\mu_{0}}{\nu}\right|=\tau<\frac{1}{2} \min\left\lbrace \frac{\eta_2}{\beta_{1}-\alpha_{1}'},1\right\rbrace\nonumber
\end{eqnarray}
and  applying Lemma \ref{lemma1} to polynomials $\nu G(\lambda)$ and $\mu_{0}\frac{F_{0}(\lambda)}{(\lambda-\lambda^{*})}$ shows
\begin{eqnarray}
\beta_{1}'-\beta_{1}<\eta_2\leq \beta_{1}-\alpha_{1}.\nonumber
\end{eqnarray}
So both cases result in
\begin{eqnarray}
\max_{i\in[1,p-1]}(\beta_{i}'-\beta_{i})<\min_{1\leqslant i\leqslant p-1}(\beta_{i}-\alpha_{i}),\nonumber
\end{eqnarray}
which implies that $\beta_{i}'-\alpha_{i}<2(\beta_{i}-\alpha_{i})$ for each $i\in [1,p-1]$.
Then,
\begin{eqnarray}
\beta_{i}'-\alpha_{i}'<\beta_{i}'-\alpha_{i}<2(\beta_{i}-\alpha_{i})\leq C_2(i)(\beta_{i}-\alpha_{i}).\nonumber
\end{eqnarray}
This finishes the proof of statement (i).\\\\
(ii) Let $m^{*}=-\frac{\mu}{\nu}\frac{\lambda^{*}-\alpha_1}{\lambda^{*}}$, $\mu_{0}=\frac{1}{2} \nu$,
 $b^{*}=-\frac{\mu}{\nu}\frac{2\alpha_1}{\lambda^{*}}$ and $\nu_{0}=\frac{1}{4}\frac{\nu^2}{\mu}\frac{\lambda^{*}}{\alpha_1}$,
  we can directly compute that $ F_0(\lambda)=\lambda-\lambda^{*}$ and  $ G_{0}(\lambda)=1$ satisfy equation \dref{cas2e}.
\end{proof}



\end{document}